\newtheorem{theorem}{Theorem}[section]
\newtheorem*{theorem*}{Theorem}
\newtheorem{theorem-non}{Theorem}
\newtheorem{lemma-non}{Lemma}
\theoremstyle{definition} 
\newtheorem{thm}{Theorem}
\theoremstyle{definition} 
\newtheorem{corollarynon}{Corollary}
\newtheorem{conjecture-non}{Conjecture}
\newtheorem{corollary-non}{Corollary}
\newtheorem{proposition}[theorem]{Proposition}
\newtheorem{lemma}[theorem]{Lemma}
\newtheorem*{lemma*}{Lemma}
\newtheorem{corollary}[theorem]{Corollary}
\newtheorem*{conjecture*}{Conjecture}
\theoremstyle{definition}
\newtheorem{definition}[theorem]{Definition}
\theoremstyle{remark}
\newtheorem{remark}[theorem]{Remark}
\numberwithin{equation}{section}
\begin{document}
\title[K\"{a}hler-Ricci flow on rational homogeneous varieties]{K\"{a}hler-Ricci flow on rational homogeneous varieties}

\author{Eder M. Correa}

%\thanks{Eder M. Correa was supported by PRPq-UFMG grant 27764*32}

\address{{IMECC-Unicamp, Departamento de Matem\'{a}tica. Rua S\'{e}rgio Buarque de Holanda 651, Cidade Universit\'{a}ria Zeferino Vaz. 13083-859, Campinas-SP, Brazil}}
\address{E-mail: {\rm ederc@unicamp.br.}}

\begin{abstract} 
In this work, we study the K\"{a}hler-Ricci flow on rational homogeneous varieties exploring the interplay between projective algebraic geometry and representation theory which underlies the classical Borel-Weil theorem. By using elements of representation theory of semisimple Lie groups and Lie algebras, we give an explicit description for all solutions of the K\"{a}hler-Ricci flow with homogeneous initial condition. This description enables us to compute explicitly the maximal existence time for any solution starting at a homogeneous K\"{a}hler metric and obtain explicit upper and lower bounds for several geometric quantities along the flow, including curvatures, volume, diameter, and the first non-zero eigenvalue of the Laplacian. As an application of our main result, we investigate the relationship between certain numerical invariants associated to ample divisors and numerical invariants arising from solutions of the K\"{a}hler-Ricci flow in the homogeneous setting. 
\end{abstract}

\maketitle

\hypersetup{linkcolor=black}
\tableofcontents

\hypersetup{linkcolor=black}
\section{Introduction}
\subsection{Motivations} Given a compact K\"{a}hler manifold $(X,\omega_{0})$ of complex dimension $n$, a solution of the K\"{a}hler-Ricci flow on $X$ starting at $\omega_{0}$ is a family of K\"{a}hler metrics $\omega(t)$ solving 
\begin{equation}
\label{KRF}
\displaystyle{\frac{\partial}{\partial t}\omega(t) = -{\rm{Ric}}(\omega(t))}, \ \ \omega(0) = \omega_{0}.
\end{equation}
From the short-time existence result of Hamilton \cite{Hamilton} (see also \cite{DeTurck}), and the fact that a maximal solution to the Ricci flow preserves the K\"{a}hler condition (e.g. \cite{Hamilton1}), it follows that the initial-value problem (\ref{KRF}) always admits a unique solution $\omega(t)$ defined on a maximal interval $[0,T)$, with $0<T \leq \infty$. Moreover, a result of Tian and Zhang \cite{TianZhang} gives a concrete characterization for the maximal existence time $T$. It is well-known (e.g. \cite{Tsuji2}, \cite{TianZhang}) that the flow (\ref{KRF}) has a global solution (i.e. $T = \infty$) if and only if the canonical line bundle $K_{X}$ of $X$ is nef or equivalently, if and only if $X$ is a minimal model \cite{MModel}, \cite{Cascini}. On the other hand, if $T < \infty$, we say that the flow (\ref{KRF}) has a finite time singularity at $T$. In this last case, the limiting class of the flow $[\omega_{T}] = [\omega_{0}] - Tc_{1}(X)$, which is nef but not K\"{a}hler, encodes the behavior of the singularity formation set of the flow (\ref{KRF}), see for instance \cite{Feldman}, \cite{Zhang}, \cite{Collins}, and references therein. 

In the particular setting of finite time singularity ($T < \infty$), from \cite{Hamilton} we have that the norm of the Riemann curvature tensor is unbounded on $X \times [0,T)$. Also, it was shown in \cite{Sesum} that the norm of the Ricci tensor has to become unbounded as $t \nearrow T$. Further, it was proved in \cite{Zhang} that the scalar curvature also becomes unbounded for finite time singularity. In \cite{SesumTian}, following Perelman's idea, Sesum and Tian proved that, if $c_{1}(X) > 0$ and $\omega_{0} \in c_{1}(X)$, then
\begin{equation}
\label{conj1}
R(t) \leq \frac{C}{T - t},
\end{equation}
where $R(t) = R(\omega(t))$ is the scalar curvature of $\omega(t)$ and $C$ is a uniform constant. In \cite{Zhang2}, it was shown in a quite general setting, that $R(t) \leq C/(T - t)^{2}$. More generally, we say that $\omega(t)$ is a Type-I solution of (\ref{KRF}) if 
\begin{equation}
\label{conj2}
|{\rm{Rm}}| \leq \frac{C}{T - t},
\end{equation}
for some uniform constant $C$, see for instance \cite{EndersMullerTopping}. In the above setting, there was a folklore speculation that all finite time singularities along the K\"{a}hler-Ricci flow are of Type-I, e.g. \cite{SongWeinkove}. However, by the recent work on the compactification spaces of reductive Lie groups by Li-Tian-Zhu, see \cite{LiTianZhu}, we have that this folklore speculation does not hold. These results are related to Hamilton-Tian's conjecture \cite{HTconjecture}, which was recently proved (independently) in \cite{ChenWang}, \cite{Bamler}, and \cite{WangZhu}. Besides the study of curvature bounds, the understanding of the evolution of other basic geometric quantities (such as volumes, diameters, etc.) also has been a basic task in the study of the K\"{a}hler-Ricci flow. Diameter bounds for solutions of the K\"{a}hler-Ricci flow as we approach a singularity are not easy to get. In general, it is expected the following \cite{Tosatti}:

\begin{conjecture-non}
\label{conj4}
Let $\omega = \omega(t)$ be a solution of the K\"{a}hler-Ricci flow (\ref{KRF}) on the maximal time interval $[0,T)$. If $T < \infty$, then
\begin{equation}
{\rm{diam}}(X,\omega(t)) \leq C,
\end{equation}
for all $t \in [0,T)$.
\end{conjecture-non}
This conjecture is known when $X$ is Fano and $\omega_{0} \in \lambda c_{1}(X)$, for some $\lambda > 0$, see for instance \cite{SesumTian}. The above conjecture is also known in the case when the limiting class $[\omega_{T}] = [\omega_{0}] - Tc_{1}(X)$ is equal to $\pi^{\ast}(\omega_{Y})$, where $\pi \colon X \to Y$ is the blowup of a compact K\"{a}hler manifold $Y$ at finitely many distinct points and $\omega_{Y}$ is a K\"{a}hler metric on $Y$ (e.g. \cite{SongWeinkove1}), and it is also proved in \cite{SongSzekelyhidiWeinkove} for some special Fano fibrations. Further results on diameter bounds can be found in \cite{IlmanenKnopf}, \cite{Topping}, \cite{ZhangQi}. Inspired by the above facts and Conjecture \ref{conj4}, in this paper we study the K\"{a}hler-Ricci flow on rational homogeneous varieties. As it was shown in \cite{Hamilton} (see also \cite{Kotschwar}), the Ricci flow preserves the isometries of the initial Riemannian manifold. Thus, if the initial metric $\omega_{0}$ in (\ref{KRF}) is homogeneous, we have that the evolving metric remains homogeneous during the flow. A solution of the K\"{a}hler-Ricci flow is homogeneous if it is homogeneous at any time. The Ricci flow on homogeneous Riemannian manifolds has been investigated by many authors, e.g. \cite{AnastassiouChrysikos}, \cite{IsenberJackson}, \cite{BohmWilking}, \cite{Lauret}, \cite{Optimal}, \cite{BohmLafuente}, \cite{Lauret1}, \cite{GramaMartins}, see also \cite{Lauret2} and references therein. However, there are very few results on K\"{a}hler-Ricci flow on homogeneous K\"{a}hler manifolds (unless they are viewed as homogeneous Riemannian manifolds). In general, bounds for geometric quantities that are sharp for Riemannian manifolds are not sharp for K\"{a}hler manifolds\footnote{A well-known example of this fact is given by the lower bound of the first non-zero eigenvalue of the Laplacian of closed Riemannian manifolds and closed K\"{a}hler manifolds, see for instance \cite{Andre}.}. Thus, it seems suitable to investigate the K\"{a}hler-Ricci flow on homogeneous manifolds taking into account tools from K\"{a}hler geometry which are not available in the Riemannian geometry setting. With this idea in mind, the aim of this paper is to study the K\"{a}hler-Ricci flow on rational homogeneous varieties exploring the interplay between projective algebraic geometry and representation theory which underlies the classical Borel-Weil theorem. By using elements of representation theory of semisimple Lie groups and Lie algebras, in the setting of rational homogeneous varieties, we give an explicit description for all solutions of the K\"{a}hler-Ricci flow with homogeneous initial condition. This description enables us to obtain explicit upper and lower bounds for several geometric quantities along the flow, including curvatures, volume, diameter, and the first non-zero eigenvalue of the Laplacian. In particular, we prove that Conjecture \ref{conj4} holds for any solution of the K\"{a}hler-Ricci flow starting at any homogeneous K\"{a}hler metric. In the homogeneous setting, these results generalize some results provided in \cite{SesumTian} on diameter and curvature bounds under the hypothesis that $\omega_{0} \in c_{1}(X)$. Also, as an application of our main result, we investigate the relationship between numerical invariants associated to ample divisors and numerical invariants arising from homogeneous solutions of the K\"{a}hler-Ricci flow.

\subsection{Main results} A rational homogeneous variety can be described as a quotient $X_{P} = G^{\mathbbm{C}}/P$, where $G^{\mathbbm{C}}$ is a semisimple complex algebraic group and $P$ is a parabolic subgroup (Borel-Remmert \cite{BorelRemmert}). Regarding $G^{\mathbbm{C}}$ as a complex analytic space, without loss of generality, we may assume that $G^{\mathbbm{C}}$ is a connected simply connected complex simple Lie group. Fixed a compact real form $G \subset G^{\mathbbm{C}}$, and considering $X_{P} = G/G \cap P$ as a $G$-space, in this paper we are interested in the homogeneous solutions of K\"{a}hler-Ricci flow (\ref{KRF}) on $X_{P}$. In the setting of rational homogeneous varieties we have a good description for the cohomology information underlying the K\"{a}hler-Ricci flow with homogeneous initial condition in terms of Lie theory, and it allows us to solve the parabolic PDE provided by (\ref{KRF}) just working out at the cohomology level. In fact, a solution of the K\"{a}hler-Ricci flow on $X_{P}$ defines a curve in the K\"{a}hler cone $\mathcal{K}_{X_{P}} \subset H^{1,1}(X_{P},\mathbbm{R})$, and since every $G$-invariant K\"{a}hler metric has the same Ricci form (e.g. \cite{MATSUSHIMA}), any homogeneous solution of the K\"{a}hler-Ricci flow (\ref{KRF}) satisfies ${\rm{Ric}}(\omega(t)) = {\rm{Ric}}(\omega_{0})$, $\forall t \in [0,T)$. From the uniqueness of $G$-invariant representatives in each cohomology class, the problem of solving the K\"{a}hler-Ricci flow on $X_{P}$ with a homogeneous initial condition reduces to the problem of solving the ODE defined by the tangent vector $-2\pi c_{1}(X_{P}) \in T_{[\omega_{0}]}\mathcal{K}_{X_{P}} = H^{1,1}(X_{P},\mathbbm{R})$. The solution of the K\"{a}hler-Ricci flow obtained from this ODE is given by
\begin{equation}
\label{generalsolution}
\omega(t) = \omega_{0} - t{\rm{Ric}}(\omega_{0}), \ \ t \in [0,T).
\end{equation}
In particular, notice that it also shows that every homogeneous solution of the K\"{a}hler-Ricci flow gives rise to a homogeneous solution of the continuity equation \cite{LaNaveTian} and vice-versa. The open convex cone $\mathcal{K}_{X_{P}}$ can be described in terms of the generators of the character group of $P \subset G^{\mathbbm{C}}$. More precisely, under the isomorphism
\begin{equation}
\label{characterintegral}
{\text{Hom}}(P,\mathbbm{C}^{\times}) \cong H^{1,1}(X_{P},\mathbbm{Z}),
\end{equation}
see for instance \cite{Popov}, the Chern classes of the line bundles associated to the generators of ${\text{Hom}}(P,\mathbbm{C}^{\times})$ define a suitable integral basis for the vector space $H^{1,1}(X_{P},\mathbbm{R})$, and they also span the convex cone $\mathcal{K}_{X_{P}}$. Based on these facts, the purpose of our main theorem is to use the isomorphism (\ref{characterintegral}) in order to obtain a concrete description for the homogeneous solutions (\ref{generalsolution}), as well as their maximal existence time, by means of the machinery of representation theory underlying the classical Borel-Weil theorem. In this way, we prove the following:

\begin{thm}
\label{Theo1}
Let $\omega_{0}$ be a $G$-invariant K\"{a}hler metric on a rational homogeneous variety $X_{P}$. Then the unique smooth solution $\omega(t)$ of the K\"{a}hler-Ricci flow on $X_{P}$ starting at $\omega_{0}$ satisfies the following:
\begin{enumerate}
\item[1)] $\omega(t)$ can be described locally in the explicit form
\begin{equation}
\omega(t) = \sum_{\alpha \in \Sigma \backslash \Theta}\bigg [ \int_{\mathbbm{P}_{\alpha}^{1}}\frac{\omega_{0}}{2\pi}- t\langle \delta_{P}, h_{\alpha}^{\vee} \rangle \bigg ]\sqrt{-1} \partial \overline{\partial}\log \big (||s_{U}v_{\varpi_{\alpha}}^{+}||^{2}\big ), \ \ \forall t \in [0,T),
\end{equation}
\end{enumerate}
for some local section $s_{U} \colon U \subset X_{P} \to G^{\mathbbm{C}}$, where $\mathbbm{P}_{\alpha}^{1} \subset X_{P}$, $\alpha \in \Sigma \backslash \Theta$, are generators of ${\rm{NE}}(X_{P})$;
\begin{enumerate}
\item[2)] The maximal existence time $T = T(\omega_{0})$ of $\omega(t)$ is given explicitly by
\begin{equation}
T(\omega_{0}) = \min_{\alpha \in \Sigma \backslash \Theta}  \int_{\mathbbm{P}_{\alpha}^{1}}\frac{\omega_{0}}{2\pi \langle \delta_{P}, h_{\alpha}^{\vee} \rangle} ;
\end{equation}

\item[3)] The scalar curvature $R(t)$ of $\omega(t)$ has the following explicit form
\begin{equation}
R(t) = -\sum_{\beta \in \Pi^{+} \backslash \langle \Theta \rangle^{+}}\frac{d}{dt}\log \bigg \{ \sum_{\alpha \in \Sigma \backslash \Theta} \bigg [ \int_{\mathbbm{P}_{\alpha}^{1}}\frac{\omega_{0}}{2\pi}- t\langle \delta_{P}, h_{\alpha}^{\vee} \rangle \bigg ] \langle \varpi_{\alpha}, h_{\beta}^{\vee} \rangle\bigg \}, \ \ \forall t \in [0,T);
\end{equation}

\item[4)] For all $0 \leq t < T$ we have
\begin{equation}
\frac{1}{\sqrt{n}(T-t)}\leq \frac{1}{\sqrt{n}} R(t) \leq |{\rm{Ric}}| \leq R(t) \leq \frac{n}{T - t}, \ \ and \ \ |{\rm{Rm}}| \leq \frac{C(n)}{T - t},
\end{equation}
\end{enumerate}
where $C(n)$ is a uniform constant which depends only on $n = \dim_{\mathbbm{C}}(X_{P})$;
\begin{enumerate}
\item[5)] For all $0 \leq t < T$ we have
\begin{equation}
\bigg [1-\frac{t}{T} \bigg]^{n}{\rm{Vol}}(X_{P},\omega_{0}) \leq {\rm{Vol}}(X_{P},\omega(t)) \leq \bigg [1-\frac{t}{T} \bigg] {\rm{Vol}}(X_{P},\omega_{0});
\end{equation}
\item[6)] For all $0 \leq t < T$ we have ${\rm{Ric}}(\omega(t)) \geq \frac{1}{C(\omega_{0})}$, such that 
\begin{equation}
\label{lowerboundricci}
C(\omega_{0}) =  \max_{\alpha \in \Sigma \backslash \Theta}  \int_{\mathbbm{P}_{\alpha}^{1}}\frac{\omega_{0}}{\pi \langle \delta_{P}, h_{\alpha}^{\vee} \rangle} .
\end{equation}
\end{enumerate}
In particular, for all $0 \leq t < T$, it follows that
\begin{equation}
{\rm{diam}}(X_{P},\omega(t)) \leq \pi \sqrt{(2n-1)C(\omega_{0})} \ \ \ \ \ \  {\text{and}} \ \ \ \ \ \ \ \frac{2}{C(\omega_{0})} \leq \lambda_{1}(t) \leq 2R(t) \Bigg [ \prod_{\alpha \succ 0} \frac{\langle \varrho^{+} + \delta_{P},h_{\alpha} \rangle}{\langle \delta_{P}, h_{\alpha} \rangle}\Bigg ],
\end{equation}
where $\lambda_{1}(t) = \lambda_{1}(X_{P},\omega(t))$ is the first non-zero eigenvalue of the Laplacian $\Delta_{\omega(t)} = {\rm{div} \circ {\rm{grad}}}$, $\forall t \in [0,T)$.
\end{thm}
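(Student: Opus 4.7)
The plan is to translate the $G$-invariant K\"ahler-Ricci flow into a linear ODE on $H^{1,1}(X_{P},\mathbbm{R})$ via the Borel-Weil dictionary, solve it in closed form, and read off every geometric bound from (\ref{generalsolution}). I would first fix the integral basis $\tilde{\omega}_{\alpha}:=\sqrt{-1}\partial\overline{\partial}\log||s_{U}v_{\varpi_{\alpha}}^{+}||^{2}$, $\alpha\in\Sigma\setminus\Theta$, of $H^{1,1}(X_{P},\mathbbm{R})$ provided by (\ref{characterintegral}), normalized against the $\mathbbm{P}_{\alpha}^{1}$ so that $\int_{\mathbbm{P}^{1}_{\beta}}\tilde{\omega}_{\alpha}=2\pi\delta_{\alpha\beta}$. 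Then $\omega_{0}=\sum_{\alpha}a_{\alpha}\tilde{\omega}_{\alpha}$ with $a_{\alpha}=\int_{\mathbbm{P}^{1}_{\alpha}}\omega_{0}/(2\pi)$, while the unique $G$-invariant representative of $2\pi c_{1}(X_{P})$ is $\rho:={\rm{Ric}}(\omega_{0})=\sum_{\alpha}\langle\delta_{P},h_{\alpha}^{\vee}\rangle\tilde{\omega}_{\alpha}$, because $K_{X_{P}}^{-1}$ is determined by the character $2\delta_{P}$ under (\ref{characterintegral}). Plugging these expressions into (\ref{generalsolution}) yields part~1) immediately, and the Tian-Zhang criterion together with $\mathcal{K}_{X_{P}}=\{\sum c_{\alpha}\tilde{\omega}_{\alpha}:c_{\alpha}>0\}$ gives $T=\min_{\alpha}a_{\alpha}/\langle\delta_{P},h_{\alpha}^{\vee}\rangle$, which is part~2).

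For part~3), $G$-invariance forces $R(t)$ to be spatially constant, so $R(t)=-\partial_{t}\log{\rm{Vol}}(X_{P},\omega(t))$, and the Weyl-dimension-type formula on $X_{P}$ gives ${\rm{Vol}}(X_{P},\omega(t))$ proportional to $\prod_{\beta\in\Pi^{+}\setminus\langle\Theta\rangle^{+}}\langle\lambda(t),h_{\beta}^{\vee}\rangle$, where $\lambda(t)=\sum_{\alpha}c_{\alpha}(t)\varpi_{\alpha}$ and $c_{\alpha}(t)=a_{\alpha}-t\langle\delta_{P},h_{\alpha}^{\vee}\rangle$; logarithmic differentiation reproduces the claimed formula. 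Writing $\langle\lambda(t),h_{\beta}^{\vee}\rangle=A_{\beta}(T_{\beta}-t)$ with $A_\beta>0$, one gets $R(t)=\sum_{\beta}(T_{\beta}-t)^{-1}$, so that $T=\min_{\beta}T_{\beta}$ (which reduces to a minimum over simple roots because $\langle\varpi_{\alpha},h_{\beta}^{\vee}\rangle\geq 0$) together with $|\Pi^{+}\setminus\langle\Theta\rangle^{+}|=n$ delivers $1/(T-t)\leq R(t)\leq n/(T-t)$; combined with $\rho\geq 0$ and Cauchy-Schwarz ($R/\sqrt{n}\leq|{\rm{Ric}}|\leq R$) this settles the scalar/Ricci portion of part~4). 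For part~5), the identity ${\rm{Vol}}(X_{P},\omega(t))/{\rm{Vol}}(X_{P},\omega_{0})=\prod_{\beta}(1-t/T_{\beta})$ with $0<1-t/T\leq 1-t/T_{\beta}\leq 1$ is sandwiched below by $(1-t/T)^{n}$ (every factor exceeds $1-t/T$) and above by $1-t/T$ (isolate the factor with $T_{\beta}=T$, bound the remaining $n-1$ factors by $1$).

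For part~6), both $\rho$ and $\omega_{0}$ are $G$-invariant Hermitian forms on $T_{eP}X_{P}=\bigoplus_{\beta\in\Pi^{+}\setminus\langle\Theta\rangle^{+}}\mathfrak{g}_{\beta}$, simultaneously diagonalized with eigenvalues proportional to $\langle\lambda_{0},h_{\beta}^{\vee}\rangle$ and $\langle\delta_{P},h_{\beta}^{\vee}\rangle$; the largest ratio reduces to a maximum over simple roots $\alpha\in\Sigma\setminus\Theta$ (using that fundamental weights pair non-negatively with all positive coroots), giving $\rho\geq\omega_{0}/C(\omega_{0})\geq\omega(t)/C(\omega_{0})$ since $\omega(t)\leq\omega_{0}$. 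Myers' theorem in real dimension $2n$ then yields the diameter bound $\pi\sqrt{(2n-1)C(\omega_{0})}$, the K\"ahler Lichnerowicz inequality ($\lambda_{1}\geq 2c$ under ${\rm{Ric}}\geq cg$) gives $\lambda_{1}(t)\geq 2/C(\omega_{0})$, and the Peter-Weyl decomposition of $L^{2}(X_{P})$ furnishes explicit eigenfunctions from the smallest nontrivial $G$-isotypic component whose Casimir eigenvalue computed on $\omega(t)$ produces the stated upper bound $2R(t)\prod_{\alpha\succ 0}\langle\varrho^{+}+\delta_{P},h_{\alpha}\rangle/\langle\delta_{P},h_{\alpha}\rangle$.

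The hardest step I anticipate is the universal curvature bound $|{\rm{Rm}}|\leq C(n)/(T-t)$ in part~4): unlike the scalar and Ricci estimates, this requires a pointwise computation of the full Riemann tensor at $eP$ via the structure constants of $\mathfrak{g}$ on the root-space decomposition $\mathfrak{m}=\bigoplus_{\beta}\mathfrak{g}_{\beta}$, verifying that every component is rational of degree $-2$ in the $c_{\alpha}(t)$'s and using $c_{\alpha}(t)\geq c_{0}(T-t)$ with constants $c_{0}$ depending only on the combinatorics of $\Pi^{+}\setminus\langle\Theta\rangle^{+}$ (hence only on $n$). A secondary technical hazard is keeping the normalization factors between Chern forms, the Ricci form, and the underlying Hermitian metric consistent throughout, since these govern the sharpness of the constants $C(\omega_{0})$, $\pi\sqrt{(2n-1)C(\omega_{0})}$, and $2/C(\omega_{0})$.
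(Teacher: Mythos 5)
Most of your outline coincides with the paper's argument: items (1)--(3) via the cohomological ODE, the Azad--Biswas basis and volume formula; the scalar-curvature bounds via $R(t)=\sum_{\beta}(T_{\beta}-t)^{-1}$ with $T=\min_{\beta}T_{\beta}$ (the paper phrases this through $P_{\beta}(t)\geq a_{\beta}(T-t)$ with equality at some simple root, which is your mediant argument); item (5) as a product of linear factors (the paper integrates $R$ instead, which is the same computation); and the Ricci lower bound, Myers, and Kähler--Lichnerowicz in item (6) exactly follow the paper's Proposition \ref{lowerricc} plus $c_{\alpha}(t)\leq c_{\alpha}(0)$. Your derivation of $|{\rm{Ric}}|\leq R$ from positivity of the eigenvalues of $\rho_{0}$ is a legitimate variant of the paper's route through $|{\rm{Ric}}|^{2}=\partial_{t}R=\sum_{\beta}(T_{\beta}-t)^{-2}$.

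There are, however, two genuine gaps. First, for $|{\rm{Rm}}|\leq C(n)/(T-t)$ your proposed strategy --- compute ${\rm{Rm}}$ at $eP$ from structure constants, observe it is homogeneous of negative degree in the $c_{\alpha}(t)$, and conclude via $c_{\alpha}(t)\geq c_{0}(T-t)$ --- does not close. A homogeneous rational function of several variables that is positive on the positive orthant is not controlled by the reciprocal of the minimum of its arguments (e.g. $c_{1}^{2}/c_{2}^{4}$ is homogeneous of degree $-2$ but is not $O(\min_{\alpha}c_{\alpha})^{-2}$), and along the flow the various $c_{\alpha}(t)$ degenerate at different rates, so this bound would a priori produce a worse power of $(T-t)$ with a constant depending on $\omega_{0}$ rather than on $n$ alone. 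The paper instead invokes the Böhm--Lafuente--Simon estimate $|{\rm{Rm}}|\leq C_{0}(n)|{\rm{Ric}}|$ valid for all homogeneous Ricci flows (\cite[Theorem 4]{Optimal}), and then uses the already-established bound $|{\rm{Ric}}|\leq R(t)\leq n/(T-t)$; some such input (or a genuinely sharper pointwise computation showing that the numerators degenerate in step with the denominators) is indispensable.

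Second, the upper bound for $\lambda_{1}(t)$ cannot be obtained the way you suggest. For a general $G$-invariant metric $\omega(t)$ (not the normal metric induced by the Killing form) the Laplacian does not act as the Casimir scalar on the $G$-isotypic components of $L^{2}(X_{P})$, so "the Casimir eigenvalue computed on $\omega(t)$" is not well defined, and in any case it would not produce the dimension-ratio constant appearing in the statement. The paper's route is analytic--algebraic: it applies the Bourguignon--Li--Yau-type upper bound of \cite{Arezzo} and \cite{BiliottiGhigi} to the projective embedding defined by the very ample line bundle $K_{X_{P}}^{-1}$, obtaining $\lambda_{1}(t)\leq 2R(t)\,h^{0}(K_{X_{P}}^{-1})/(h^{0}(K_{X_{P}}^{-1})-1)$ after evaluating $\langle c_{1}(X_{P})\cup[\omega(t)]^{n-1},[X_{P}]\rangle$ in terms of $R(t)$ via Eq. (\ref{Chernscalar}), and then identifies $h^{0}(K_{X_{P}}^{-1})=\dim_{\mathbbm{C}}V(\delta_{P})$ by Borel--Weil and the Weyl dimension formula. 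Your sketch would need to be replaced by this (or an equivalent) argument to reach the stated constant.
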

The result above provides an explicit description for the unique solution of the K\"{a}hler-Ricci flow associated to any (homogeneous) initial data $(X_{P},\omega_{0})$ purely in terms of Lie theory. Actually, following the ideas of \cite{AZAD}, \cite{CorreaGrama}, \cite{Correa}, one can compute explicitly any solution as described in item (1) using algebraic tools of representation theory of complex semisimple Lie algebras. Particularly, from item $(6)$ of Theorem \ref{Theo1}, we have the following:
\begin{corollarynon}
\label{corollaryA}
The conjecture \ref{conj4} holds for any homogeneous solution of the K\"{a}hler-Ricci flow on a rational homogeneous variety.
\end{corollarynon}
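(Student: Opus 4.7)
The plan is to derive the corollary as an immediate consequence of item (6) of Theorem \ref{Theo1}, so essentially there is no new content beyond recognizing that the diameter bound displayed at the end of the theorem already gives precisely what Conjecture \ref{conj4} asks for.

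More concretely, I would proceed as follows. First, from item (6) of Theorem \ref{Theo1} we have the uniform lower Ricci bound
\begin{equation*}
{\rm{Ric}}(\omega(t)) \geq \frac{1}{C(\omega_{0})}, \quad \forall t \in [0,T),
\end{equation*}
where $C(\omega_{0})$ is the explicit positive constant defined in (\ref{lowerboundricci}), depending only on the initial K\"ahler class through the integrals $\int_{\mathbbm{P}_{\alpha}^{1}} \omega_{0}/(2\pi)$ and the Lie-theoretic data $\langle \delta_{P},h_{\alpha}^{\vee}\rangle$. The essential point is that $C(\omega_{0})$ does \emph{not} depend on $t$, and in particular does not blow up as $t \nearrow T$.

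Second, I would apply the classical Myers diameter theorem to the underlying Riemannian metric $g(t)$ associated to $\omega(t)$: for a complete Riemannian manifold of real dimension $2n$ with ${\rm{Ric}} \geq (2n-1)/r^{2}$, one has ${\rm{diam}} \leq \pi r$. Taking $r^{2} = (2n-1)C(\omega_{0})$ yields
\begin{equation*}
{\rm{diam}}(X_{P},\omega(t)) \leq \pi \sqrt{(2n-1)C(\omega_{0})}, \quad \forall t \in [0,T),
\end{equation*}
which is precisely the bound already recorded in Theorem \ref{Theo1}. Since the right-hand side is a finite constant depending only on $\omega_{0}$ and $n$, this is exactly the statement of Conjecture \ref{conj4} whenever $T < \infty$ (and in fact it holds uniformly on $[0,T)$ regardless of whether $T$ is finite or infinite).

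There is no real obstacle at the level of the corollary itself; all the work has been pushed into item (6) of Theorem \ref{Theo1}, whose proof rests on the explicit description (\ref{generalsolution}) of the homogeneous solution together with the Borel-Weil-type formula for ${\rm{Ric}}(\omega_{0})$ in terms of the fundamental weights $\varpi_{\alpha}$ and the sum of positive roots $\delta_{P}$ attached to $P$. The only thing one must verify, to be sure that Myers' theorem applies, is completeness of $(X_{P}, \omega(t))$ for each $t$, which is automatic because $X_{P}$ is compact.
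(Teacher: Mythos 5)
Your proposal is correct and follows exactly the paper's route: the corollary is an immediate consequence of item (6) of Theorem \ref{Theo1}, namely the $t$-independent lower Ricci bound ${\rm{Ric}}(\omega(t)) \geq 1/C(\omega_{0})$ combined with Myers's theorem (completeness being automatic from compactness of $X_{P}$), which yields the uniform diameter bound $\pi\sqrt{(2n-1)C(\omega_{0})}$. No discrepancy with the paper's argument.
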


Based on the works of Sesum \cite{Sesum1}, Enders, M\"{u}ller, Topping \cite{EndersMullerTopping}, and Bamler \cite{Bamler}, on the convergence of Ricci-flows with bounded curvature, one can also conclude from Theorem \ref{Theo1} that singularity models of compact simply connected homogeneous K\"{a}hler manifolds are non flat homogeneous gradient shrinking solitons. Under a mild assumption on the scalar curvature of the initial metric, this last fact was also shown in \cite{Optimal} and \cite{Longtimehomogeneous} in the general setting of the homogeneous Ricci flow with finite-time singularity. The key point in the proof of item $(4)$ of our main result is to show that Eq. (\ref{conj1}) holds for any homogeneous solution of the K\"{a}hler-Ricci flow, from this we show that the scalar curvature of such solutions controls the norm of the Ricci curvature tensor. Combining this last fact with \cite[Theorem 4]{Optimal}, we achieve the upper bound for the norm of the Riemann curvature tensor along the flow without any assumption on the scalar curvature of the initial homogeneous K\"{a}hler metric, i.e., we show that in the setting of homogeneous solutions of the K\"{a}hler-Ricci flow the upper bound for the scalar curvature of Eq. (\ref{conj1}) implies the upper bound for the norm of the Riemann curvature tensor as in Eq. (\ref{conj2}). The proof of Conjecture \ref{conj4} follows from item $(6)$ of Theorem \ref{Theo1}, and it is independent of the aforementioned facts. Actually, in order to obtain the uniform upper bound for the diameter of $(X_{P},\omega(t))$ and the uniform lower bound for the first non-zero eigenvalue $\lambda_{1}(t) = \lambda_{1}(X_{P},\omega(t))$ of the Laplacian $\Delta_{\omega(t)} = {\rm{div} \circ {\rm{grad}}}$, we prove that ${\rm{Ric}}(\omega(t)) \geq \frac{1}{C(\omega_{0})}$, for all $t \in [0,T)$, where $C(\omega_{0})$ is the uniform constant given in Eq. (\ref{lowerboundricci}) depending only on $\omega_{0}$. Then, we apply, respectively, Myers's theorem \cite{MYERS} and Lichnerowicz's theorem \cite{Andre}. The upper bound for $\lambda_{1}(t)$ is obtained combining Bourguignon-Li-Yau estimate \cite{BourguignonLiYau}, see also \cite{Arezzo}, \cite{BiliottiGhigi}, the classical Borel-Weil theorem \cite{Serre}, \cite{BorelHirzenbruch}, and the Weyl dimension formula (e.g. \cite{Humphreys}). It is worth pointing out that, from Theorem \ref{Theo1}, we have a rich source of examples which illustrate the results provided in \cite{BamlerZhang}. As an application of Theorem \ref{Theo1}, we study the relationship between numerical invariants associated to ample divisors and certain numerical invariants arising from homogeneous solutions of the K\"{a}hler-Ricci flow. By considering the isomorphism
\begin{equation}
{\text{Hom}}(P,\mathbbm{C}^{\times}) \cong {\rm{Cl}}(X_{P}),
\end{equation}
we investigate the consequences of Theorem \ref{Theo1} from the point of view of intersection theory (e.g. \cite{Fulton}). In this setting, we have the following corollary:

\begin{corollarynon}
\label{corollary2}
In the previous theorem, if $\omega_{0} \in 2\pi c_{1}(\mathcal{O}(D))$, for some ample divisor $D \in {\rm{Div}}(X_{P})$, then the unique smooth solution $\omega(t)$ of the K\"{a}hler-Ricci flow on $X_{P}$ starting at $\omega_{0}$ also satisfies the following:
\begin{enumerate}
\item[1)] $ \displaystyle{ \omega(t) = \sum_{\alpha \in \Sigma \backslash \Theta} \big ( D_{t} \cdot \mathbbm{P}_{\alpha}^{1}\big)\sqrt{-1} \partial \overline{\partial}\log \big (||s_{U}v_{\varpi_{\alpha}}^{+}||^{2}\big ), \ \ \forall t \in [0,T)}$,
\end{enumerate}
where $(D_{t})_{t \in [0,T)}$ is a family of $\mathbbm{R}$-divisors, such that $\frac{d}{d t}D_{t} = K_{X_{P}}$ and $D_{0} = D$;
\begin{enumerate}
\item[2)] $\displaystyle{T = \mathscr{T}(D) = \frac{1}{\tau(D)}}$, where $\tau(D)$ is the nef value of the line bundle $\mathcal{O}(D) \to X_{P}$;
\item[3)] $\displaystyle{R(t) = -\sum_{\beta \in \Pi^{+} \backslash \langle \Theta \rangle^{+}}\frac{d}{dt}\log \bigg \{ \sum_{\alpha \in \Sigma \backslash \Theta} \big ( D_{t} \cdot \mathbbm{P}_{\alpha}^{1}\big) \langle \varpi_{\alpha}, h_{\beta}^{\vee} \rangle\bigg \}, \ \ \forall t \in [0,T)}$;
\item[4)] For all $0 \leq t < T$ we have
\begin{equation}
(2\pi)^{n}\Big [1-\tau(D)t \Big]^{n}\frac{{\rm{deg}}(D)}{n!} \leq {\rm{Vol}}(X_{P},\omega(t)) \leq (2\pi)^{n}\Big [1-\tau(D)t \Big]\frac{{\rm{deg}}(D)}{n!};
\end{equation}

\item[5)] $\displaystyle{{\rm{Ric}}(\omega(t)) \geq \frac{1}{\mathscr{C}(D)}}$, such that $\displaystyle{\frac{\mathscr{C}(D)}{2} = \max_{\alpha \in \Sigma \backslash \Theta} \frac{(D \cdot  \mathbbm{P}_{\alpha}^{1})}{\langle \delta_{P}, h_{\alpha}^{\vee} \rangle}}$, for all $t \in [0,T)$;
\item[6)] Particularly, the first non-zero eigenvalue $\lambda_{1}(X_{P},\omega_{0})$ of the Laplacian $\Delta_{\omega_{0}} = {\rm{div} \circ {\rm{grad}}}$ satisfies 
\begin{equation}
\label{eigenvalueOkoukov}
\frac{2}{\mathscr{C}(D)} \leq \lambda_{1}(X_{P},\omega_{0}) \leq 2n\bigg [ \frac{ \#(\Delta(D) \cap \mathbbm{Z}^{n})}{ \#(\Delta(D) \cap \mathbbm{Z}^{n}) - 1}\bigg ],
\end{equation}
\end{enumerate}
where $\Delta(D)$ is a Newton–Okounkov body associated to $D \in {\rm{Div}}(X_{P})$. Further, $(D_{t})_{t \in [0,T)}$, $\mathscr{T}(D)$ and $\mathscr{C}(D)$ depend only on the numerical equivalence class of $D$.

\end{corollarynon}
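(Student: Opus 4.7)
The plan is to reinterpret every analytic quantity in Theorem~\ref{Theo1} in intersection-theoretic language under the hypothesis $\omega_{0} \in 2\pi c_{1}(\mathcal{O}(D))$. First I would establish the dictionary: for each Schubert curve $\mathbbm{P}_{\alpha}^{1}$ one has $\int_{\mathbbm{P}_{\alpha}^{1}} \omega_{0}/(2\pi) = D \cdot \mathbbm{P}_{\alpha}^{1}$, while $\langle \delta_{P}, h_{\alpha}^{\vee}\rangle = -K_{X_{P}} \cdot \mathbbm{P}_{\alpha}^{1}$ records the anticanonical degree of the generator. Setting $D_{t} \ce D + tK_{X_{P}}$, so that $\frac{d}{dt}D_{t} = K_{X_{P}}$ and $D_{0}=D$, this yields
\[
(D_{t} \cdot \mathbbm{P}_{\alpha}^{1}) = \int_{\mathbbm{P}_{\alpha}^{1}} \frac{\omega_{0}}{2\pi} - t \langle \delta_{P}, h_{\alpha}^{\vee}\rangle,
\]
so direct substitution into items (1) and (3) of Theorem~\ref{Theo1} gives items (1) and (3) of the corollary, and the same substitution in item (6) of Theorem~\ref{Theo1} turns $C(\omega_{0})$ into $\mathscr{C}(D)$, which gives item (5).

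Next, for item (2), I would use that the Mori cone $\overline{\mathrm{NE}}(X_{P})$ is generated by the Schubert curves $\mathbbm{P}_{\alpha}^{1}$, $\alpha \in \Sigma\backslash\Theta$, so the nef value of $\mathcal{O}(D)$ is realized on these extremal rays:
\[
\tau(D) = \max_{\alpha \in \Sigma\backslash\Theta} \frac{\langle \delta_{P}, h_{\alpha}^{\vee}\rangle}{D \cdot \mathbbm{P}_{\alpha}^{1}},
\]
whose reciprocal coincides with the expression for $T$ in item (2) of Theorem~\ref{Theo1}, yielding $T = \mathscr{T}(D) = 1/\tau(D)$. For item (4), the identity $\mathrm{Vol}(X_{P},\omega_{0}) = \int_{X_{P}} \omega_{0}^{n}/n! = (2\pi)^{n}\deg(D)/n!$ combined with $1-t/T = 1-t\tau(D)$ transforms item (5) of Theorem~\ref{Theo1} into the stated volume bounds.

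The delicate ingredient is the upper bound on $\lambda_{1}(X_{P},\omega_{0})$ in item (6); the lower bound $2/\mathscr{C}(D)$ is immediate from the lower bound in Theorem~\ref{Theo1} specialized at $t=0$ with $C(\omega_{0})=\mathscr{C}(D)$. For the upper bound I would apply the Bourguignon--Li--Yau estimate for the first eigenvalue of the Laplacian on K\"{a}hler manifolds admitting a projective embedding: in the present normalization $\omega_{0} \in 2\pi c_{1}(\mathcal{O}(D))$, the Kodaira embedding $\phi \colon X_{P} \hookrightarrow \mathbbm{P}\big(H^{0}(X_{P},\mathcal{O}(D))^{*}\big)$ induced by the complete linear system of $D$ yields $\lambda_{1}(X_{P},\omega_{0}) \leq 2n \cdot N/(N-1)$, where $N = h^{0}(X_{P},\mathcal{O}(D))$. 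By the Borel--Weil theorem, $H^{0}(X_{P},\mathcal{O}(D))^{*}$ is the irreducible $G^{\mathbbm{C}}$-module $V_{\lambda_{D}}$ whose highest weight $\lambda_{D}$ corresponds to $D$ via $\mathrm{Hom}(P,\mathbbm{C}^{\times}) \cong \mathrm{Cl}(X_{P})$, so $N = \dim V_{\lambda_{D}}$, which by the Weyl dimension formula equals $\prod_{\alpha \succ 0} \langle \lambda_{D} + \delta_{P}, h_{\alpha}\rangle/\langle \delta_{P}, h_{\alpha}\rangle$. Finally, for Newton--Okounkov bodies $\Delta(D)$ coming from flag-type valuations on $X_{P}$ (for instance string polytopes), one has $\#(\Delta(D) \cap \mathbbm{Z}^{n}) = \dim V_{\lambda_{D}} = N$, closing the inequality. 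The main obstacle here will be pinning down the form of the Bourguignon--Li--Yau bound that delivers the clean factor $2n \cdot N/(N-1)$ under our normalization, and verifying the lattice-point identity for the chosen Newton--Okounkov body.

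For the concluding assertion on numerical invariance, I would observe that $(D \cdot \mathbbm{P}_{\alpha}^{1})$, $D^{n}$, $\tau(D)$, and the class of $D_{t}=D+tK_{X_{P}}$ are all invariants of the numerical equivalence class of $D$, so $\mathscr{T}(D)$, $\mathscr{C}(D)$ and the family $(D_{t})_{t \in [0,T)}$ depend only on $[D]$ numerically.
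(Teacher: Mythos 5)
Your proposal is correct and follows essentially the same route as the paper: the same intersection-theoretic dictionary $(D_t\cdot\mathbbm{P}^1_\alpha)=\int_{\mathbbm{P}^1_\alpha}\omega_0/2\pi - t\langle\delta_P,h_\alpha^\vee\rangle$ with $D_t=D+tK_{X_P}$, the same degree computation for item (4), and the same Bourguignon--Li--Yau/Borel--Weil/string-polytope chain for item (6) (where the lattice-point identity you flag is exactly the paper's Theorem~\ref{Okounkovstring} together with Eq.~(\ref{dimensionlattice})). The only cosmetic difference is that for item (2) you rederive the nef-value formula by testing nefness on the generators of ${\rm NE}(X_P)$, whereas the paper simply cites Snow's theorem — the two are the same computation.
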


The result above shows that the behavior of certain geometric quantities along the K\"{a}hler-Ricci flow associated to a homogeneous initial data $(X_{P},\omega_{0})$, where $\omega_{0} \in 2\pi c_{1}(\mathcal{O}(D))$, for some ample divisor $D \in {\rm{Div}}(X_{P})$, are controlled by the numerical invariants $\mathscr{T}(D)$ and $\mathscr{C}(D)$. In the setting of Eq. (\ref{conj1}), Eq. (\ref{conj2}) and Conjecture \ref{conj4}, we obtain from Corollary \ref{corollary2} that 
\begin{equation}
\label{numeridalcurvature}
R(t) \leq \frac{n}{\mathscr{T}(D) - t}, \ \ \ \ |{\rm{Rm}}| \leq \frac{C(n)}{\mathscr{T}(D) - t}, \ \ \ {\text{and}} \ \ \ {\rm{diam}}(X_{P},\omega(t)) \leq \pi \sqrt{(2n-1)\mathscr{C}(D)},
\end{equation}
for every $t \in [0,\mathscr{T}(D))$. The result of item $(6)$ of the Corollary \ref{corollary2} above provides upper and lower bounds for the first non-zero eigenvalue $\lambda_{1}(X_{P},\omega_{0})$ in terms of the numerical invariants $\mathscr{C}(D)$ and $\Delta(D)$. The Newton–Okounkov body $\Delta(D)$ which appears in Eq. (\ref{eigenvalueOkoukov}) is obtained from the string polytope (e.g. \cite{Littelmann}, \cite{BerensteinZelevinsky}) which parameterises a crystal bases for the irreducible $\mathfrak{g}^{\mathbbm{C}}$-module defined by $H^{0}(X_{P},\mathcal{O}(D))$, see for instance \cite{Kaveh}. As in the toric case \cite{LegendreDias}, item $(6)$ also shows that one can compute explicit upper bounds for the first non-zero eigenvalue associated to integral homogeneous K\"{a}hler metrics in terms of the convex geometry and combinatorics of convex polytopes. Based on the ideas of Corollary \ref{corollary2}, we make some comments and remarks at the end of this paper about how one can relate the numerical invariants $\mathscr{T}(D)$ and $\mathscr{C}(D)$ to certain well-known invariants which appear in the context of algebraic geometry and symplectic geometry, including the global Seshadri constant of ample line bundles (\cite{Demailly}, \cite{Lazarsfeld}), the maximum possible radius of embeddings of symplectic and K\"{a}hler balls (see \cite{McDuffPolterovich}, \cite{Gromov}, \cite{Biran}), and the log canonical threshold of ample $\mathbbm{Q}$-divisors (e.g. \cite[\S 8 - \S 10]{Kollar}, \cite{DemaillyKollar}, \cite{LazarsfeldII}).

{\bf{Organization of the paper.}} This paper is organized as follows: In Section 2, we review some basic known results on K\"{a}hler-Ricci Flow. In Section 3, we introduce some general results on rational homogeneous varieties to be used in the proof of the main results. In Section 4, we prove Theorem \ref{Theo1} and its corollaries. In Section 5, we make some comments and remarks relating the numerical invatirants obtained from Corollary \ref{corollary2} to certain invariants which appear in the context of algebraic geometry and symplectic geometry.

{\bf{Acknowledgements.}} The author would like to thank Professor Lino Grama and Professor Lucas Calixto for very helpful conversations.

\section{Generalities on K\"{a}hler-Ricci flow}

\subsection{K\"{a}hler-Ricci flow} Let $X$ be a $n$-dimensional compact K\"{a}hler manifold and denote by $\mathcal{K}_{X}$ its K\"{a}hler cone, i.e.,
\begin{equation}
\mathcal{K}_{X} = \big \{ [\omega] \in H^{1,1}(X,\mathbbm{R})  \ \big | \ \omega \ {\text{is a K\"{a}hler form}}\big \}.
\end{equation}
%In this setting, denoting by ${\rm{Nef}}(X)$ the nef cone of $X$, we have
%\begin{center}
%${\rm{Nef}}(X) = \overline{\mathcal{K}_{X}}$ \ \ and \ \  $\mathcal{K}_{X} = {\rm{Int}}({\rm{Nef}}(X)),$
%\end{center}
%i.e., a nef class is a limit of K\"{a}hler classes. 
%\begin{center}

%\end{center}

If $\omega(t)$ is a solution of the K\"{a}hler-Ricci flow on $X$ stating at some K\"{a}hler metric $\omega_{0}$, with $0 \leq t <T$, $T \leq \infty$, by taking the cohomology class of Eq. (\ref{KRF}) we see that 
\begin{equation}
\frac{\partial}{\partial t}\omega(t) = - 2\pi c_{1}(X) \Longrightarrow [\omega_{0}] - 2\pi tc_{1}(X) = [\omega(t)] \in \mathcal{K}_{X}, \ \forall t \in [0,T). 
\end{equation}
The converse of the above fact is the content of the following theorem proved in \cite{Cao}, \cite{Tsuji1}, \cite{Tsuji}, \cite{TianZhang}.
\begin{theorem}
\label{ExistenceKRF}
Let $(X,\omega_{0})$ be a compact K\"{a}hler manifold of complex dimension $n$. Then the K\"{a}hler-Ricci flow (\ref{KRF}) has a unique smooth solution $\omega(t)$ defined on a maximal interval $[0,T)$, where $T$ is given by
\begin{equation}
T:= \sup \big \{ t > 0 \ \big | \ [\omega_{0}] - 2\pi t c_{1}(X) \in \mathcal{K}_{X}\big \}.
\end{equation}
\end{theorem}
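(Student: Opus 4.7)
The plan is to prove both $T \leq T^{\ast}$ and $T \geq T^{\ast}$, where
\begin{equation*}
T^{\ast} := \sup\bigl\{ t > 0 \,\bigl|\, [\omega_{0}] - 2\pi t c_{1}(X) \in \mathcal{K}_{X}\bigr\},
\end{equation*}
after invoking Hamilton's short-time existence to get some smooth solution $\omega(t)$ on a small interval $[0,\varepsilon)$. The inequality $T \leq T^{\ast}$ is essentially immediate: differentiating at the cohomology level in (\ref{KRF}) and using $[\mathrm{Ric}(\omega(t))] = 2\pi c_{1}(X)$ for every K\"ahler $\omega(t)$ gives $[\omega(t)] = [\omega_{0}] - 2\pi t c_{1}(X)$, and this class must lie in $\mathcal{K}_{X}$ as long as $\omega(t)$ is K\"ahler.

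The substantive content is the converse $T \geq T^{\ast}$, which amounts to extending the flow whenever the cohomology class still sits in $\mathcal{K}_{X}$. My approach is the standard reduction to a parabolic complex Monge--Amp\`ere equation. Fix $T' < T^{\ast}$, pick a smooth volume form $\Omega>0$ on $X$, and set
\begin{equation*}
\hat{\omega}_{t} := \omega_{0} - t\,\mathrm{Ric}(\Omega), \qquad \mathrm{Ric}(\Omega) := -\sqrt{-1}\,\partial\bar{\partial}\log\Omega \in 2\pi c_{1}(X).
\end{equation*}
By the choice of $T'$ and the $\partial\bar{\partial}$-lemma, after possibly modifying $\hat{\omega}_{t}$ by an exact form depending smoothly on $t$, one may assume $\hat{\omega}_{t}$ is a K\"ahler form on $[0,T']$. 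Writing $\omega(t) = \hat{\omega}_{t} + \sqrt{-1}\,\partial\bar{\partial}\varphi(t)$ converts (\ref{KRF}) into the scalar equation
\begin{equation*}
\frac{\partial \varphi}{\partial t} = \log\frac{(\hat{\omega}_{t} + \sqrt{-1}\,\partial\bar{\partial}\varphi)^{n}}{\Omega}, \qquad \varphi(0) = 0, \qquad \hat{\omega}_{t} + \sqrt{-1}\,\partial\bar{\partial}\varphi > 0,
\end{equation*}
and the task becomes to show that the unique smooth short-time solution $\varphi$ of this parabolic Monge--Amp\`ere equation extends smoothly up to $t = T'$.

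To continue across $T'$ it suffices to derive uniform a priori estimates on $[0,T']$. The key estimates, obtained in order, are: (i) a $C^{0}$ bound on $\varphi$ and on $\dot{\varphi}$ via the parabolic maximum principle applied to the equation and its $t$-derivative; (ii) a two-sided bound on $\log(\omega(t)^{n}/\Omega)$ consequent on (i); (iii) a parabolic Aubin--Yau second-order estimate bounding $\mathrm{tr}_{\hat{\omega}_{t}}\omega(t)$ from above, using that the bisectional curvature of the family $\{\hat{\omega}_{t}\}_{t\in[0,T']}$ is uniformly bounded on the compact set $X\times[0,T']$. Together (i)--(iii) yield a uniform equivalence $C^{-1}\hat{\omega}_{t} \leq \omega(t) \leq C\hat{\omega}_{t}$, hence uniform parabolicity of the Monge--Amp\`ere equation. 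Evans--Krylov then gives a uniform $C^{2,\alpha}$ bound on $\varphi$, parabolic Schauder bootstraps to uniform $C^{k}$ bounds for all $k$, and a standard continuity/compactness argument extends $\varphi$ smoothly to $t = T'$. Since $T' < T^{\ast}$ was arbitrary, $T \geq T^{\ast}$.

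The main obstacle is step (iii), the parabolic $C^{2}$ estimate: one has to compute the evolution of $\log \mathrm{tr}_{\hat{\omega}_{t}}\omega(t)$ along the flow and absorb curvature and time-derivative contributions from the moving reference family $\hat{\omega}_{t}$ into the good negative term coming from the Monge--Amp\`ere structure. The condition $T' < T^{\ast}$ enters crucially here, since it guarantees that $\hat{\omega}_{t}$ remains uniformly positive definite on $[0,T']$ and therefore that the curvature quantities to be absorbed are bounded.
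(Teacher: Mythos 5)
The paper does not prove this statement itself; it quotes it from \cite{Cao}, \cite{Tsuji1}, \cite{Tsuji}, \cite{TianZhang}, and your outline is exactly the standard Tian--Zhang argument from those references: the cohomological computation giving $T\leq T^{\ast}$, and for $T\geq T^{\ast}$ the reduction to the parabolic complex Monge--Amp\`ere equation with a moving K\"ahler reference family $\hat{\omega}_{t}$ followed by the usual chain of estimates ($C^{0}$ bounds on $\varphi$ and $\dot{\varphi}$, volume-form bounds, the parabolic Aubin--Yau second-order estimate, Evans--Krylov and Schauder). This is correct and takes essentially the same route as the cited proof.
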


On a compact K\"{a}hler manifold $(X,\omega_{0})$ one can also consider the 1-parameter family of equations:
\begin{equation}
\label{CEq}
\omega(t) = \omega_{0} - t {\rm{Ric}}(\omega(t)),
\end{equation}
notice that in the above equations the K\"{a}hler classes vary according to the linear relation: $[\omega] = [\omega_{0}] - 2\pi t c_{1}(X)$, where $[\omega] \in H^{2}(X,\mathbbm{R}) \cap H^{1,1}(X)$. In this last setting, we have the following result
\begin{theorem}[\cite{LaNaveTian}]
\label{ExistenceCEq}
For any initial K\"{a}hler metric $\omega_{0}$, there is a smooth family of solutions $\omega(t)$ for (\ref{CEq}) on $[0,T)\times X$, such that 
\begin{equation}
T:= \sup \big \{ t > 0 \ \big | \ [\omega_{0}] - 2\pi t c_{1}(X) \in \mathcal{K}_{X}\big \}.
\end{equation}
\end{theorem}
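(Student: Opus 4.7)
The plan is to recast the continuity equation (\ref{CEq}) as a one-parameter family of scalar complex Monge-Amp\`ere equations via the $\partial\bar\partial$-lemma, and then run the classical continuity method on the interval $[0,T)$ determined by the K\"ahler cone condition. This parallels the proof of Theorem \ref{ExistenceKRF}, with the parabolic reduction replaced by a purely elliptic one-parameter problem. Concretely, I would fix a smooth volume form $\Omega$ on $X$ whose Ricci form $\mathrm{Ric}(\Omega) = -\sqrt{-1}\,\partial\bar\partial\log\Omega$ represents $2\pi c_1(X)$, set the reference forms $\chi_t := \omega_0 - t\,\mathrm{Ric}(\Omega) \in [\omega_0] - 2\pi t c_1(X)$, and write the unknown as $\omega(t) = \chi_t + \sqrt{-1}\,\partial\bar\partial\varphi_t$ with $\omega(t) > 0$. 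Using the identity $\mathrm{Ric}(\omega(t)) = -\sqrt{-1}\,\partial\bar\partial\log(\omega(t)^n/\Omega) + \mathrm{Ric}(\Omega)$, equation (\ref{CEq}) collapses to the Monge-Amp\`ere equation
$$(\chi_t + \sqrt{-1}\,\partial\bar\partial\varphi_t)^n = e^{\varphi_t/t}\,\Omega,$$
which, after the rescaling $\psi_t = \varphi_t / t$, takes the more convenient form
$$(\chi_t + t\sqrt{-1}\,\partial\bar\partial\psi_t)^n = e^{\psi_t}\,\Omega, \qquad \chi_t + t\sqrt{-1}\,\partial\bar\partial\psi_t > 0,$$
and degenerates at $t=0$ to the pointwise identity $\omega_0^n = e^{\psi_0}\,\Omega$, which uniquely determines $\psi_0$ once $\Omega$ is fixed.

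Let $I \subseteq [0,T)$ denote the set of times at which the rescaled equation admits a smooth positive solution $\psi_t$. The inclusion $I \subseteq [0,T)$ is automatic, since any such solution produces a K\"ahler form in the class $[\omega_0] - 2\pi t c_1(X)$. For openness of $I$, I would linearize at a given solution $\psi_{t_0}$: the linearization acting on variations $\dot\psi$ is the self-adjoint elliptic operator $t_0\,\Delta_{\omega(t_0)} - 1$, which is strictly negative (because $\Delta_{\omega(t_0)} \leq 0$) and thus invertible between suitable H\"older spaces, so the implicit function theorem furnishes solutions for all nearby times, with $0 \in I$ serving as a base point. For closedness, I would assemble the standard a priori estimates on each compact subinterval $[0, T-\varepsilon] \subset [0,T)$: a $C^0$ bound coming from the maximum principle and the coercive $-1$ term in the linearization, a Yau-type second-order estimate controlling $\mathrm{tr}_{\chi_t}\omega(t)$ from above, and Calabi/Evans-Krylov $C^{2,\alpha}$ regularity, followed by bootstrapping to uniform $C^\infty$ bounds.

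The main obstacle is securing the uniform $C^0$ estimate on compact subintervals of $[0,T)$, since the reference forms $\chi_t$ need not be uniformly K\"ahler a priori and the right-hand side is exponential in the unknown. On any compact subinterval $[0, T - \varepsilon]$, however, $[\chi_t]$ remains strictly inside the open K\"ahler cone $\mathcal{K}_X$, so after adding a uniformly bounded $\sqrt{-1}\,\partial\bar\partial$-exact correction one may assume $\chi_t$ is uniformly K\"ahler in $t$; Moser iteration, or an Alexandrov-Bakelman-Pucci-type argument adapted to twisted K\"ahler-Einstein problems, then delivers the required $C^0$ bound. Once this is under control, the classical Calabi-Yau bootstrap yields uniform higher regularity, and hence closedness of $I$. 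Combining non-emptiness at $t=0$, openness, and closedness forces $I = [0,T)$, which is the assertion of the theorem.
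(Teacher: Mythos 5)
The paper offers no proof of this statement—it is imported verbatim from La Nave--Tian \cite{LaNaveTian}—so there is no internal argument to compare against; your proposal is, in substance, the proof from the cited source. The reduction is correct: writing $\omega(t)=\chi_t+\sqrt{-1}\partial\overline{\partial}\varphi_t$ with $\chi_t=\omega_0-t\,{\rm{Ric}}(\Omega)$ turns (\ref{CEq}) into $(\chi_t+\sqrt{-1}\partial\overline{\partial}\varphi_t)^n=e^{\varphi_t/t}\Omega$, which for each fixed $t\in(0,T)$ is the Aubin--Yau equation with the coercive sign in the class $[\omega_0]-2\pi t c_1(X)$, hence uniquely solvable precisely when that class is K\"ahler, with smoothness in $t$ coming from the invertibility of $t\Delta_{\omega(t)}-1$. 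Two comments. First, your $C^0$ discussion is heavier than necessary: once $\chi_t$ is replaced by a representative that is uniformly K\"ahler on $[0,T-\varepsilon]$, the maximum principle applied at the extrema of $\psi_t$ gives $\sup|\psi_t|\leq\sup|\log(\chi_t^n/\Omega)|$ directly, because of the $e^{\psi}$ on the right-hand side; Moser iteration or ABP is not needed. Second, the one genuine wrinkle: running the continuity method in the parameter $t$ with base point $t=0$ does not give openness at $0$ by the implicit function theorem as you state it, because the linearization $t\Delta-1$ degenerates to $-{\rm{id}}$ there, which is injective but not surjective as a map $C^{2,\alpha}\to C^{0,\alpha}$. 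The standard repair is either to solve each fixed $t>0$ separately by Aubin--Yau (the auxiliary continuity parameter lives inside the elliptic problem, not in $t$), or to produce solutions for small $t$ by a contraction-mapping argument in the correctly weighted spaces; with that adjustment your argument is complete.
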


\begin{remark}The continuity equation (\ref{CEq}) can be regarded as an elliptic version of the K\"{a}hler-Ricci flow. Also, notice that the value $T$ of Theorem \ref{ExistenceCEq} coincides with the maximal existence time of Theorem \ref{ExistenceKRF}.
\end{remark}

Given a compact K\"{a}hler manifold $(X,\omega)$, we will denote by $R(\omega) = {\rm{tr}}_{\omega}({\rm{Ric}}(\omega))$ its associated Chern scalar curvature. It is straightforward to see that 
\begin{equation}
\label{Chernscalar}
{\rm{Ric}}(\omega) \wedge \omega^{n-1} = \frac{1}{n}R(\omega) \omega^{n}.
\end{equation}
Also, from the K\"{a}hler condition, we have that $R(\omega) = {\textstyle{\frac{1}{2}}}{\rm{scal}}(\omega)$, where ${\rm{scal}}(\omega)$ denotes the associated Riemannian scalar curvature. In this setting, for the sake of simplicity, we shall refer to $R(\omega)$ just as scalar curvature. From Eq. (\ref{Chernscalar}) above, and considering 
\begin{equation}
{\rm{Vol}}(X,\omega) = \frac{1}{n!}\int_{X}\omega^{n},
\end{equation}
one can prove the following.
\begin{lemma}
\label{volumeflow}
Under the K\"{a}hler-Ricci flow, the volume of $(X,\omega(t))$ changes by 
\begin{equation}
\frac{d}{ d t}{\rm{Vol}}(X,\omega(t)) = - \frac{1}{n!}\int_{X}R(t) \omega(t)^{n},
\end{equation}
where $R(t) = R(\omega(t))$, for all $0 \leq t  < T$.
\end{lemma}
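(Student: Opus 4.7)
The plan is to differentiate the volume integral under the integral sign and substitute the flow equation directly. Since $\omega(t)$ depends smoothly on $t$ and $X$ is compact, I can exchange the time derivative with the integral over $X$, yielding
\begin{equation*}
\frac{d}{dt}{\rm{Vol}}(X,\omega(t)) = \frac{1}{n!}\int_{X}\frac{\partial}{\partial t}\big(\omega(t)^{n}\big).
\end{equation*}

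Next, I would apply the Leibniz rule for the exterior product: because $\omega(t)$ is a $2$-form (which commutes with itself in the exterior algebra of even-degree forms), differentiating the $n$-fold wedge product gives
\begin{equation*}
\frac{\partial}{\partial t}\big(\omega(t)^{n}\big) = n\,\omega(t)^{n-1}\wedge \frac{\partial}{\partial t}\omega(t).
\end{equation*}
Substituting the K\"ahler-Ricci flow equation (\ref{KRF}) into the right-hand side replaces $\frac{\partial}{\partial t}\omega(t)$ with $-{\rm Ric}(\omega(t))$, producing $-n\,{\rm Ric}(\omega(t))\wedge \omega(t)^{n-1}$ inside the integral.

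Finally, I would invoke identity (\ref{Chernscalar}), namely ${\rm Ric}(\omega)\wedge \omega^{n-1} = \frac{1}{n}R(\omega)\,\omega^{n}$, to rewrite the integrand purely in terms of the scalar curvature and the volume form. After collecting the factors $n$ and $\frac{1}{n}$, the combinatorial constants combine to $\frac{1}{n!}$, giving exactly the claimed formula. There is no genuine obstacle: the only point requiring minor care is justifying the interchange of $\frac{d}{dt}$ with $\int_{X}$, which is immediate from compactness of $X$ and smoothness of the solution $\omega(t)$ on $X\times[0,T)$ guaranteed by Theorem \ref{ExistenceKRF}.
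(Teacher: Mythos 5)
Your proof is correct and is exactly the argument the paper has in mind (the paper omits the proof, noting only that it follows from Eq.~(\ref{Chernscalar}) and the definition of the volume): differentiate under the integral, apply the Leibniz rule to $\omega(t)^{n}$, substitute $\partial_{t}\omega = -{\rm Ric}(\omega)$, and use ${\rm Ric}(\omega)\wedge\omega^{n-1} = \frac{1}{n}R(\omega)\omega^{n}$ so that the factor $n$ cancels the $\frac{1}{n}$. No issues.
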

In this work, it will be useful to consider also the following result.
\begin{lemma}
\label{scalaralongflow}
The scalar curvature $R$ of $\omega = \omega(t)$ evolves by
\begin{equation}
\label{derivativescalar0}
\frac{\partial}{\partial t}R = \Delta R + |{\rm{Ric}}|^{2},
\end{equation}
where $|{\rm{Ric}}|^{2} = ||{\rm{Ric}}(\omega)||_{\omega}^{2}$, for all $0 \leq t  < T$.
\end{lemma}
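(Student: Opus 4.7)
The plan is to compute the evolution of $R$ directly from its definition $R = g^{i\bar{j}}R_{i\bar{j}}$ along the Kähler-Ricci flow $\partial_{t}g_{i\bar{j}} = -R_{i\bar{j}}$, by writing
\begin{equation*}
\frac{\partial R}{\partial t} = \Bigl(\frac{\partial g^{i\bar{j}}}{\partial t}\Bigr) R_{i\bar{j}} + g^{i\bar{j}}\,\frac{\partial R_{i\bar{j}}}{\partial t},
\end{equation*}
and identifying the two pieces on the right with $|{\rm{Ric}}|^{2}$ and $\Delta R$ respectively. Everything is carried out in local holomorphic coordinates on a coordinate patch of $(X,\omega(t))$.

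For the first piece, I would differentiate the inverse-metric identity $g^{i\bar{k}}g_{j\bar{k}} = \delta^{i}_{j}$ in $t$ and feed in the flow equation to obtain $\partial_{t}g^{i\bar{j}} = R^{i\bar{j}}$, where indices are raised with $g$. Pairing with $R_{i\bar{j}}$ gives $R^{i\bar{j}}R_{i\bar{j}} = |{\rm{Ric}}|^{2}$. For the second piece, I would use the Kähler identity $R_{i\bar{j}} = -\partial_{i}\partial_{\bar{j}}\log\det(g_{k\bar{l}})$ together with Jacobi's formula $\partial_{t}\log\det(g_{k\bar{l}}) = g^{k\bar{l}}\partial_{t}g_{k\bar{l}} = -R$ to deduce $\partial_{t}R_{i\bar{j}} = \partial_{i}\partial_{\bar{j}}R$. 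Tracing with $g^{i\bar{j}}$ then produces the Kähler Laplacian $g^{i\bar{j}}\partial_{i}\partial_{\bar{j}}R = \Delta R$, and summing the two contributions yields the claimed identity.

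There is no deep obstacle here: the argument is essentially standard tensor bookkeeping on a Kähler manifold, and what is being proved is a classical computation. The only point that requires some care is consistency of normalisations, since Section 2 adopts the Chern scalar curvature $R = \tfrac{1}{2}{\rm{scal}}(\omega)$ and the associated Chern Laplacian $\Delta f = g^{i\bar{j}}\partial_{i}\partial_{\bar{j}}f$; with these conventions fixed throughout, the two computations above combine precisely into $\partial_{t}R = \Delta R + |{\rm{Ric}}|^{2}$, as stated.
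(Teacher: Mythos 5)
Your computation is correct and is the standard derivation of the evolution equation for the scalar curvature under the K\"ahler-Ricci flow (as in, e.g., the Song--Weinkove lecture notes cited by the paper); the paper itself states Lemma \ref{scalaralongflow} without proof, so there is nothing to diverge from. Your closing remark on normalisations is the right point of care, and note that in the paper's only use of the lemma the term $\Delta R$ vanishes ($R$ is constant on the homogeneous space), so the choice between the Chern Laplacian $g^{i\bar{j}}\partial_{i}\partial_{\bar{j}}$ and the Laplace--Beltrami operator is immaterial there.
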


\begin{remark}
From Eq. (\ref{derivativescalar0}), we have 
\begin{equation}
\label{derivativescalar}
\frac{\partial}{\partial t}R = \Delta R + |{\rm{Ric}}^{\circ}|^{2} + \frac{1}{n}R^{2} \geq \Delta R + \frac{1}{n}R^{2},
\end{equation}
where ${\rm{Ric}}^{\circ}$ is the traceless part of the Ricci form, i.e. ${\rm{Ric}}^{\circ} = {\rm{Ric}} - \frac{R}{n}\omega$.
\end{remark}
\section{Generalities on rational homogeneous varieties}

In this section, we review some basic facts about rational homogeneous varieties. From \cite{BorelRemmert}, the study of a rational homogeneous variety reduces to the study of projective algebraic varieties defined by complex flag varieties
\begin{equation}
X_{P} = G^{\mathbbm{C}}/P,
\end{equation}
where $G^{\mathbbm{C}}$ is a connected simply connected complex simple Lie group and $P \subset G^{\mathbbm{C}}$ is a parabolic Lie subgroup. In what follows, we restrict our attention to complex flag varieties. For more details on the subject presented in this section, we suggest \cite{Akhiezer}, \cite{Flagvarieties}, \cite{HumphreysLAG}, \cite{BorelRemmert}.
\subsection{The Picard group of flag varieties}
\label{subsec3.1}
Let $G^{\mathbbm{C}}$ be a connected, simply connected, and complex Lie group with simple Lie algebra $\mathfrak{g}^{\mathbbm{C}}$. By fixing a Cartan subalgebra $\mathfrak{h}$ and a simple root system $\Sigma \subset \mathfrak{h}^{\ast}$, we have a decomposition of $\mathfrak{g}^{\mathbbm{C}}$ given by
\begin{center}
$\mathfrak{g}^{\mathbbm{C}} = \mathfrak{n}^{-} \oplus \mathfrak{h} \oplus \mathfrak{n}^{+}$, 
\end{center}
where $\mathfrak{n}^{-} = \sum_{\alpha \in \Pi^{-}}\mathfrak{g}_{\alpha}$ and $\mathfrak{n}^{+} = \sum_{\alpha \in \Pi^{+}}\mathfrak{g}_{\alpha}$, here we denote by $\Pi = \Pi^{+} \cup \Pi^{-}$ the root system associated to the simple root system $\Sigma = \{\alpha_{1},\ldots,\alpha_{l}\} \subset \mathfrak{h}^{\ast}$. Let us denote by $\kappa$ the Cartan-Killing form of $\mathfrak{g}^{\mathbbm{C}}$. From this, for every  $\alpha \in \Pi^{+}$ we have $h_{\alpha} \in \mathfrak{h}$, such  that $\alpha = \kappa(\cdot,h_{\alpha})$, and we can choose $x_{\alpha} \in \mathfrak{g}_{\alpha}$ and $y_{-\alpha} \in \mathfrak{g}_{-\alpha}$, such that $[x_{\alpha},y_{-\alpha}] = h_{\alpha}$. From these data, we can define a Borel subalgebra by setting $\mathfrak{b} = \mathfrak{h} \oplus \mathfrak{n}^{+}$. Now we consider the following result (see for instance \cite{Flagvarieties}, \cite{HumphreysLAG}):
\begin{theorem}
Any two Borel subgroups are conjugate.
\end{theorem}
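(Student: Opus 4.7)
The plan is to give the classical proof via Borel's fixed point theorem. The two ingredients I will need are (i) completeness of the flag variety $G^{\mathbbm{C}}/B$, and (ii) the fixed-point theorem for connected solvable algebraic groups acting on complete varieties. Granting these, the argument is short, so the bulk of the work is in setting up (i) properly.

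First, I would fix a Borel subgroup $B = H \cdot N^{+}$ associated with the data above (where $H = \exp(\mathfrak{h})$ and $N^{+} = \exp(\mathfrak{n}^{+})$), and exhibit $X_{B} := G^{\mathbbm{C}}/B$ as a complete (projective) algebraic variety. The cleanest route is to produce a projective embedding via a regular dominant weight $\varpi \in \mathfrak{h}^{\ast}$: the irreducible representation $V_{\varpi}$ with highest-weight line $[v_{\varpi}^{+}] \in \mathbbm{P}(V_{\varpi})$ has stabiliser exactly $B$, so the orbit map $gB \mapsto g \cdot [v_{\varpi}^{+}]$ identifies $X_{B}$ with a closed $G^{\mathbbm{C}}$-orbit in the projective space $\mathbbm{P}(V_{\varpi})$. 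This establishes completeness of $X_{B}$.

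Next, let $B'$ be any other Borel subgroup of $G^{\mathbbm{C}}$. By definition $B'$ is connected and solvable. Consider the left-translation action $B' \times X_{B} \to X_{B}$, $(b', gB) \mapsto b'gB$. Borel's fixed point theorem, applied to the connected solvable group $B'$ acting morphically on the complete variety $X_{B}$, yields a fixed point, i.e., an element $g_{0} \in G^{\mathbbm{C}}$ such that $b'g_{0}B = g_{0}B$ for every $b' \in B'$. Equivalently, $g_{0}^{-1} B' g_{0} \subseteq B$.

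Finally, I would invoke the maximality property built into the definition of a Borel subgroup (maximal among connected solvable subgroups): since $g_{0}^{-1} B' g_{0}$ is a connected solvable subgroup contained in the Borel subgroup $B$, and is itself Borel (conjugation preserves the property of being a maximal connected solvable subgroup), the inclusion $g_{0}^{-1} B' g_{0} \subseteq B$ must be an equality. Hence $B' = g_{0} B g_{0}^{-1}$, proving conjugacy. The main obstacle, in my view, is the completeness step: one really does need a nontrivial input (either a direct representation-theoretic construction as above, or the structure theory identifying $B$ as its own normaliser and $G^{\mathbbm{C}}/B$ as projective via Chevalley's theorem). Once completeness is in place, Borel's fixed point theorem does all the geometric work, and maximality promotes the resulting containment to an equality.
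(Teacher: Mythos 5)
Your argument is correct and is precisely the standard proof found in the references the paper cites for this statement (the paper itself states the theorem as background from \cite{HumphreysLAG}, \cite{Flagvarieties} without proof): completeness of $G^{\mathbbm{C}}/B$, Borel's fixed point theorem applied to $B'$ acting by left translation, and maximality of $g_{0}^{-1}B'g_{0}$ among connected solvable subgroups to upgrade the containment to equality. The only point worth spelling out in the completeness step is why the orbit of the highest-weight line $[v_{\varpi}^{+}]$ is closed in $\mathbbm{P}(V_{\varpi})$ (e.g., a closed orbit of minimal dimension exists, contains a $B$-fixed line by the fixed point theorem, and that line must be the unique highest-weight line), but this is standard and does not affect the validity of the proof.
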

From the result above, given a Borel subgroup $B \subset G^{\mathbbm{C}}$, up to conjugation, we can always suppose that $B = \exp(\mathfrak{b})$. In this setting, given a parabolic Lie subgroup $P \subset G^{\mathbbm{C}}$, without loss of generality we can suppose that
\begin{center}
$P  = P_{\Theta}$, \ for some \ $\Theta \subseteq \Sigma$,
\end{center}
where $P_{\Theta} \subset G^{\mathbbm{C}}$ is the parabolic subgroup which integrates the Lie subalgebra 
\begin{center}

$\mathfrak{p}_{\Theta} = \mathfrak{n}^{+} \oplus \mathfrak{h} \oplus \mathfrak{n}(\Theta)^{-}$, \ with \ $\mathfrak{n}(\Theta)^{-} = \displaystyle \sum_{\alpha \in \langle \Theta \rangle^{-}} \mathfrak{g}_{\alpha}$, 

\end{center}
By definition, it is straightforward to show that $P_{\Theta} = N_{G^{\mathbbm{C}}}(\mathfrak{p}_{\Theta})$, where $N_{G^{\mathbbm{C}}}(\mathfrak{p}_{\Theta})$ is its normalizer in  $G^{\mathbbm{C}}$ of $\mathfrak{p}_{\Theta} \subset \mathfrak{g}^{\mathbbm{C}}$. In what follows it will be useful for us to consider the following basic chain of Lie subgroups

\begin{center}

$T^{\mathbbm{C}} \subset B \subset P \subset G^{\mathbbm{C}}$.

\end{center}
For each element in the aforementioned chain of Lie subgroups we have the following characterization: 

\begin{itemize}

\item $T^{\mathbbm{C}} = \exp(\mathfrak{h})$;  \ \ (complex torus)

\item $B = N^{+}T^{\mathbbm{C}}$, where $N^{+} = \exp(\mathfrak{n}^{+})$; \ \ (Borel subgroup)

\item $P = P_{\Theta} = N_{G^{\mathbbm{C}}}(\mathfrak{p}_{\Theta})$, for some $\Theta \subset \Sigma \subset \mathfrak{h}^{\ast}$. \ \ (parabolic subgroup)

\end{itemize}
Now let us recall some basic facts about the representation theory of $\mathfrak{g}^{\mathbbm{C}}$, more details can be found in \cite{Humphreys}. For every $\alpha \in \Sigma$, we can set 
$$h_{\alpha}^{\vee} = \frac{2}{\kappa(h_{\alpha},h_{\alpha})}h_{\alpha}.$$ 
The fundamental weights $\{\varpi_{\alpha} \ | \ \alpha \in \Sigma\} \subset \mathfrak{h}^{\ast}$ of $(\mathfrak{g}^{\mathbbm{C}},\mathfrak{h})$ are defined by requiring that $\varpi_{\alpha}(h_{\beta}^{\vee}) = \delta_{\alpha \beta}$, $\forall \alpha, \beta \in \Sigma$. We denote by 
$$\Lambda^{+} = \bigoplus_{\alpha \in \Sigma}\mathbbm{Z}_{\geq 0}\varpi_{\alpha},$$ 
the set of integral dominant weights of $\mathfrak{g}^{\mathbbm{C}}$. Let $V$ be an arbitrary finite dimensional $\mathfrak{g}^{\mathbbm{C}}$-module. By considering its weight space decomposition
\begin{center}
$\displaystyle{V = \bigoplus_{\mu \in \Pi(V)}V_{\mu}},$ \ \ \ \ 
\end{center}
such that $V_{\mu} = \{v \in V \ | \ h \cdot v = \mu(h)v, \ \forall h \in \mathfrak{h}\} \neq \{0\}$, $\forall \mu \in \Pi(V) \subset \mathfrak{h}^{\ast}$, from the Lie algebra representation theory we have the following facts:
\begin{enumerate}
\item A highest weight vector (of weight $\lambda$) in a $\mathfrak{g}^{\mathbbm{C}}$-module $V$ is a non-zero vector $v_{\lambda}^{+} \in V_{\lambda}$, such that 
\begin{center}
$x \cdot v_{\lambda}^{+} = 0$, \ \ \ \ \ ($\forall x \in \mathfrak{n}^{+}$).
\end{center}
Such a $\lambda \in \Pi(V)$ satisfying the above condition is called highest weight of $V$;
\item $V$ irreducible $\Longrightarrow$ $\exists$ highest weight vector $v_{\lambda}^{+} \in V$ (unique up to non-zero
scalar multiples) for some $\lambda \in \Pi(V)$; 
\item If $\lambda \in \Lambda^{+}$, then there exists a finite dimensional irreducible $\mathfrak{g}^{\mathbbm{C}}$-module $V$ which has $\lambda$ as highest weight. In this case, we denote $V = V(\lambda)$;

\item For all $\lambda \in \Lambda^{+}$, we have $V(\lambda) = \mathfrak{U}(\mathfrak{g}^{\mathbbm{C}}) \cdot v_{\lambda}^{+}$, where $\mathfrak{U}(\mathfrak{g}^{\mathbbm{C}})$ is the universal enveloping algebra of $\mathfrak{g}^{\mathbbm{C}}$;
\item The fundamental representations are defined by $V(\varpi_{\alpha})$, $\alpha \in \Sigma$; 

\item Given $\lambda \in \Lambda^{+}$, such that $\lambda = \sum_{\alpha}n_{\alpha}\varpi_{\alpha}$, we have
\begin{center}
$\displaystyle v_{\lambda}^{+} = \bigotimes_{\alpha \in \Sigma}(v_{\varpi_{\alpha}}^{+})^{\otimes n_{\alpha}}$ \ \ \ and \ \ \ $\displaystyle V(\lambda) = \mathfrak{U}(\mathfrak{g}^{\mathbbm{C}}) \cdot v_{\lambda}^{+} \subset \bigotimes_{\alpha \in \Sigma}V(\varpi_{\alpha})^{\otimes n_{\alpha}};$
\end{center}
\item For all $\lambda \in \Lambda^{+}$, we have the following correspondence of induced irreducible representations
\begin{center}
$\varrho \colon G^{\mathbbm{C}} \to {\rm{GL}}(V(\lambda))$ \ $\Longleftrightarrow$ \ $\varrho_{\ast} \colon \mathfrak{g}^{\mathbbm{C}} \to \mathfrak{gl}(V(\lambda))$,
\end{center}
such that $\varrho(\exp(x)) = \exp(\varrho_{\ast}x)$, $\forall x \in \mathfrak{g}^{\mathbbm{C}}$, notice that $G^{\mathbbm{C}} = \langle \exp(\mathfrak{g}^{\mathbbm{C}}) \rangle$.
\end{enumerate}
In what follows, for any representation $\varrho \colon G^{\mathbbm{C}} \to {\rm{GL}}(V(\lambda))$, for the sake of simplicity, we shall denote $\varrho(g)v = gv$, for all $g \in G^{\mathbbm{C}}$, and all $v \in V(\lambda)$. Let $G \subset G^{\mathbbm{C}}$ be a compact real form for $G^{\mathbbm{C}}$. Given a complex flag variety $X_{P} = G^{\mathbbm{C}}/P$, regarding $X_{P}$ as a homogeneous $G$-space, that is, $X_{P} = G/G\cap P$, the following theorem allows us to describe all $G$-invariant K\"{a}hler structures on $X_{P}$.
\begin{theorem}[Azad-Biswas, \cite{AZAD}]
\label{AZADBISWAS}
Let $\omega \in \Omega^{1,1}(X_{P})^{G}$ be a closed invariant real $(1,1)$-form, then we have

\begin{center}

$\pi^{\ast}\omega = \sqrt{-1}\partial \overline{\partial}\varphi$,

\end{center}
where $\pi \colon G^{\mathbbm{C}} \to X_{P}$, and $\varphi \colon G^{\mathbbm{C}} \to \mathbbm{R}$ is given by 
\begin{center}
$\varphi(g) = \displaystyle \sum_{\alpha \in \Sigma \backslash \Theta}c_{\alpha}\log \big (||gv_{\varpi_{\alpha}}^{+}|| \big )$, \ \ \ \ $(\forall g \in G^\mathbbm{C})$
\end{center}
with $c_{\alpha} \in \mathbbm{R}$, $\forall \alpha \in \Sigma \backslash \Theta$. Conversely, every function $\varphi$ as above defines a closed invariant real $(1,1)$-form $\omega_{\varphi} \in \Omega^{1,1}(X_{P})^{G}$. Moreover, $\omega_{\varphi}$ defines a $G$-invariant K\"{a}hler form on $X_{P}$ if and only if $c_{\alpha} > 0$,  $\forall \alpha \in \Sigma \backslash \Theta$.
\end{theorem}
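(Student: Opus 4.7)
My plan is to reduce the classification of $G$-invariant closed real $(1,1)$-forms on $X_{P}$ to a representation-theoretic calculation, using the Borel-Weil embeddings associated to the fundamental weights together with a uniqueness argument for $G$-invariant representatives of cohomology classes.

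For each $\alpha \in \Sigma \setminus \Theta$, I first check that $\mathfrak{p}_{\Theta}$ fixes the line $\mathbbm{C}\,v_{\varpi_{\alpha}}^{+}$: the summands $\mathfrak{h} \oplus \mathfrak{n}^{+}$ act diagonally or trivially, and for $y_{-\beta} \in \mathfrak{g}_{-\beta}$ with $\beta \in \Theta$ one has $y_{-\beta}\,v_{\varpi_{\alpha}}^{+} = 0$ since $\varpi_{\alpha}(h_{\beta}^{\vee}) = \delta_{\alpha\beta} = 0$. Hence $\varpi_{\alpha}$ integrates to a character $\chi_{\alpha} \colon P \to \mathbbm{C}^{\times}$, producing a holomorphic line bundle $L_{\varpi_{\alpha}} = G^{\mathbbm{C}} \times_{P} \mathbbm{C}_{-\chi_{\alpha}}$ on $X_{P}$ whose Chern classes, via the isomorphism (\ref{characterintegral}), form a $\mathbbm{Z}$-basis of $H^{1,1}(X_{P},\mathbbm{Z})$. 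By the classical Borel-Weil theorem, $H^{0}(X_{P}, L_{\varpi_{\alpha}}) \cong V(\varpi_{\alpha})^{\ast}$ and the map $gP \mapsto [g v_{\varpi_{\alpha}}^{+}]$ defines a $G$-equivariant holomorphic embedding $\Psi_{\alpha} \colon X_{P} \hookrightarrow \mathbbm{P}(V(\varpi_{\alpha}))$. Pulling back the Fubini-Study form for a $G$-invariant Hermitian product on $V(\varpi_{\alpha})$ yields a $G$-invariant K\"{a}hler representative $\omega_{\alpha}$ of a positive multiple of $c_{1}(L_{\varpi_{\alpha}})$ satisfying
\[
\pi^{\ast} \omega_{\alpha} \;=\; \sqrt{-1}\,\partial\overline{\partial} \log\bigl(\|g v_{\varpi_{\alpha}}^{+}\|^{2}\bigr),
\]
which descends to $X_{P}$ since under $g \mapsto gp$ the potential changes by $2\log|\chi_{\alpha}(p)|$, a pluriharmonic quantity on $G^{\mathbbm{C}}$.

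The crucial step is uniqueness of the $G$-invariant representative within each class of $H^{1,1}(X_{P},\mathbbm{R})$. Existence follows by averaging any smooth representative over the compact group $G$ against Haar measure. For uniqueness, the difference of two invariant representatives is $G$-invariant, $d$-exact, and of type $(1,1)$; by the $\partial\overline{\partial}$-lemma on the compact K\"{a}hler manifold $X_{P}$ it equals $\sqrt{-1}\,\partial\overline{\partial}\psi$ for a smooth real function $\psi$, which after $G$-averaging can be taken $G$-invariant, hence constant on the homogeneous space $X_{P}$, so that the difference vanishes. Consequently any closed real $\omega \in \Omega^{1,1}(X_{P})^{G}$ decomposes as $\omega = \sum_{\alpha \in \Sigma \setminus \Theta} c_{\alpha}\, \omega_{\alpha}$ with $c_{\alpha} \in \mathbbm{R}$, whose pullback to $G^{\mathbbm{C}}$ is exactly $\sqrt{-1}\,\partial\overline{\partial}$ of the announced potential $\varphi$ (after absorbing the factor $2$ into the constants $c_{\alpha}$). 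The converse is immediate from the same descent argument. For the final claim, $\mathcal{K}_{X_{P}}$ coincides with the ample cone of the rational homogeneous variety $X_{P}$, which is the open positive orthant spanned by $\{c_{1}(L_{\varpi_{\alpha}})\}_{\alpha \in \Sigma \setminus \Theta}$ since each $L_{\varpi_{\alpha}}$ is very ample through $\Psi_{\alpha}$; therefore $\omega_{\varphi}$ is K\"{a}hler if and only if $c_{\alpha} > 0$ for every $\alpha \in \Sigma \setminus \Theta$.

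The main obstacle I expect is the uniqueness of the $G$-invariant representative, where one must carefully combine the $\partial\overline{\partial}$-lemma with Haar averaging and the homogeneity of $X_{P}$ to conclude that a $G$-invariant $d$-exact $(1,1)$-form must vanish. A secondary nuisance is keeping track of normalization conventions --- the sign in $L_{\varpi_{\alpha}}$ versus $L_{-\varpi_{\alpha}}$ and the factor of $2$ between $\log\|v\|$ and $\log\|v\|^{2}$ --- so that the final formula matches the statement on the nose.
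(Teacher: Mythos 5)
The paper does not actually prove this theorem; it is quoted from Azad--Biswas \cite{AZAD}, so there is no internal proof to compare against. Your overall strategy --- integrate each $\varpi_{\alpha}$, $\alpha\in\Sigma\backslash\Theta$, to a character of $P$, produce the invariant potentials $\log\big(\|gv_{\varpi_{\alpha}}^{+}\|^{2}\big)$, establish existence and uniqueness of $G$-invariant representatives by Haar averaging together with the $\partial\overline{\partial}$-lemma, and then match coefficients against the basis $\{c_{1}(L_{\varpi_{\alpha}})\}$ of $H^{1,1}(X_{P},\mathbbm{R})$ --- is sound and is essentially the standard route to this result.

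There is, however, one genuinely false step. The map $\Psi_{\alpha}\colon gP\mapsto[gv_{\varpi_{\alpha}}^{+}]$ is \emph{not} an embedding of $X_{P}$ unless $\Sigma\backslash\Theta=\{\alpha\}$: the stabilizer of the line $[v_{\varpi_{\alpha}}^{+}]$ is the maximal parabolic $P_{\Sigma\backslash\{\alpha\}}\supseteq P_{\Theta}$, so $\Psi_{\alpha}$ factors through the fibration $X_{P}\to G^{\mathbbm{C}}/P_{\Sigma\backslash\{\alpha\}}$. Consequently $\omega_{\alpha}$ is only semi-positive (degenerate along the fibres) and $L_{\varpi_{\alpha}}$ is nef but not very ample on $X_{P}$ in general. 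This does not damage your first two assertions, for which you only need $\omega_{\alpha}$ to be a closed $G$-invariant representative of (a positive multiple of) $c_{1}(L_{\varpi_{\alpha}})$, but it invalidates the stated justification of the last one: ``each $L_{\varpi_{\alpha}}$ is very ample through $\Psi_{\alpha}$'' is wrong, and even if it were right it would only yield one inclusion of cones. The correct argument is that $L_{\chi}$ is very ample precisely when $\langle\chi,h_{\alpha}^{\vee}\rangle>0$ for every $\alpha\in\Sigma\backslash\Theta$ (then the stabilizer of $[v_{\chi}^{+}]$ is exactly $P_{\Theta}$ and Borel--Weil gives a genuine embedding), while conversely a K\"{a}hler $\omega_{\varphi}$ forces $c_{\alpha}>0$ because $c_{\alpha}$ equals, up to a positive constant, $\int_{\mathbbm{P}^{1}_{\alpha}}\omega_{\varphi}$. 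Finally, for the ``if'' direction you must still pass from ``$[\omega_{\varphi}]$ is a K\"{a}hler class'' to ``the invariant form $\omega_{\varphi}$ is positive definite'': take any K\"{a}hler form in the class, average it over $G$ (which preserves positivity since $G$ acts by biholomorphisms), and invoke your uniqueness statement. That step is available from what you have set up, but it is not written.
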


\begin{remark}
\label{innerproduct}
It is worth pointing out that the norm $|| \cdot ||$ in the last theorem is a norm induced from some fixed $G$-invariant inner product $\langle \cdot, \cdot \rangle_{\alpha}$ on $V(\varpi_{\alpha})$, for every $\alpha \in \Sigma \backslash \Theta$. 
\end{remark}

\begin{remark}
An important consequence of Theorem \ref{AZADBISWAS} is that it allows us to describe the local K\"{a}hler potential for any homogeneous K\"{a}hler metric in a quite concrete way using geometric tools coming from the representation theory of complex semisimple Lie algebras, for some examples of concrete computations we suggest \cite{CorreaGrama}, \cite{Correa}.
\end{remark}

By means of the above theorem we can describe the unique $G$-invariant representative in each integral class in $H^{2}(X_{P},\mathbbm{Z})$. In fact, consider the associated $P$-principal bundle $P \hookrightarrow G^{\mathbbm{C}} \to X_{P}$. By choosing a trivializing open covering $X_{P} = \bigcup_{i \in I}U_{i}$, in terms of $\check{C}$ech cocycles we can write 
\begin{center}
$G^{\mathbbm{C}} = \Big \{(U_{i})_{i \in I}, \psi_{ij} \colon U_{i} \cap U_{j} \to P \Big \}$.
\end{center}
Given a fundamental weight $\varpi_{\alpha} \in \Lambda^{+}$, we consider the induced character $\chi_{\varpi_{\alpha}} \in {\text{Hom}}(T^{\mathbbm{C}},\mathbbm{C}^{\times})$, such that $(d\chi_{\varpi_{\alpha}})_{e} = \varpi_{\alpha}$. From the homomorphism $\chi_{\varpi_{\alpha}} \colon P \to \mathbbm{C}^{\times}$ one can equip $\mathbbm{C}$ with a structure of $P$-space, such that $pz = \chi_{\varpi_{\alpha}}(p)^{-1}z$, $\forall p \in P$, and $\forall z \in \mathbbm{C}$. Denoting by $\mathbbm{C}_{-\varpi_{\alpha}}$ this $P$-space, we can form an associated holomorphic line bundle $\mathscr{O}_{\alpha}(1) = G^{\mathbbm{C}} \times_{P}\mathbbm{C}_{-\varpi_{\alpha}}$, which can be described in terms of $\check{C}$ech cocycles by
\begin{equation}
\label{linecocycle}
\mathscr{O}_{\alpha}(1) = \Big \{(U_{i})_{i \in I},\chi_{\varpi_{\alpha}}^{-1} \circ \psi_{i j} \colon U_{i} \cap U_{j} \to \mathbbm{C}^{\times} \Big \},
\end{equation}
that is, $\mathscr{O}_{\alpha}(1) = \{g_{ij}\} \in \check{H}^{1}(X_{P},\mathcal{O}_{X_{P}}^{\ast})$, such that $g_{ij} = \chi_{\varpi_{\alpha}}^{-1} \circ \psi_{i j}$, for every $i,j \in I$. 
\begin{remark}
\label{parabolicdec}
We observe that, if we have a parabolic Lie subgroup $P \subset G^{\mathbbm{C}}$, such that $P = P_{\Theta}$, the decomposition 
\begin{equation}
P_{\Theta} = \big[P_{\Theta},P_{\Theta} \big]T(\Sigma \backslash \Theta)^{\mathbbm{C}}, \ \  {\text{such that }} \ \ T(\Sigma \backslash \Theta)^{\mathbbm{C}} = \exp \Big \{ \displaystyle \sum_{\alpha \in  \Sigma \backslash \Theta}a_{\alpha}h_{\alpha} \ \Big | \ a_{\alpha} \in \mathbbm{C} \Big \},
\end{equation}
see for instance \cite[Proposition 8]{Akhiezer}, shows us that ${\text{Hom}}(P,\mathbbm{C}^{\times}) = {\text{Hom}}(T(\Sigma \backslash \Theta)^{\mathbbm{C}},\mathbbm{C}^{\times})$. Therefore, if we take $\varpi_{\alpha} \in \Lambda^{+}$, such that $\alpha \in \Theta$, it follows that $\mathscr{O}_{\alpha}(1) = X_{P} \times \mathbbm{C}$, i.e., the associated holomorphic line bundle $\mathscr{O}_{\alpha}(1)$ is trivial.
\end{remark}

Given $\mathscr{O}_{\alpha}(1) \in {\text{Pic}}(X_{P})$, such that $\alpha \in \Sigma \backslash \Theta$, as described above, if we consider an open covering $X_{P} = \bigcup_{i \in I} U_{i}$ which trivializes both $P \hookrightarrow G^{\mathbbm{C}} \to X_{P}$ and $ \mathscr{O}_{\alpha}(1) \to X_{P}$, by taking a collection of local sections $(s_{i})_{i \in I}$, such that $s_{i} \colon U_{i} \to G^{\mathbbm{C}}$, we can define $q_{i} \colon U_{i} \to \mathbbm{R}^{+}$, such that 
\begin{equation}
\label{functionshermitian}
q_{i} =  {\mathrm{e}}^{-2\pi \varphi_{\varpi_{\alpha}} \circ s_{i}} = \frac{1}{||s_{i}v_{\varpi_{\alpha}}^{+}||^{2}},
\end{equation}
for every $i \in I$. Since $s_{j} = s_{i}\psi_{ij}$ on $U_{i} \cap U_{j} \neq \emptyset$, and $pv_{\varpi_{\alpha}}^{+} = \chi_{\varpi_{\alpha}}(p)v_{\varpi_{\alpha}}^{+}$, for every $p \in P$, such that $\alpha \in \Sigma \backslash \Theta$, the collection of functions $(q_{i})_{i \in I}$ satisfy $q_{j} = |\chi_{\varpi_{\alpha}}^{-1} \circ \psi_{ij}|^{2}q_{i}$ on $U_{i} \cap U_{j} \neq \emptyset$. Hence, we obtain a collection of functions $(q_{i})_{i \in I}$ which satisfies on $U_{i} \cap U_{j} \neq \emptyset$ the following relation
\begin{equation}
\label{collectionofequ}
q_{j} = |g_{ij}|^{2}q_{i},
\end{equation}
such that $g_{ij} = \chi_{\varpi_{\alpha}}^{-1} \circ \psi_{i j}$, where $i,j \in I$. From this, we can define a Hermitian structure $H$ on $\mathscr{O}_{\alpha}(1)$ by taking on each trivialization $f_{i} \colon L_{\chi_{\varpi_{\alpha}}} \to U_{i} \times \mathbbm{C}$ a metric defined by
\begin{equation}
\label{hermitian}
H(f_{i}^{-1}(x,v),f_{i}^{-1}(x,w)) = q_{i}(x) v\overline{w},
\end{equation}
for $(x,v),(x,w) \in U_{i} \times \mathbbm{C}$. The Hermitian metric above induces a Chern connection $\nabla = d + \partial \log H$ with curvature $F_{\nabla}$ satisfying (locally)
\begin{equation}
\displaystyle \frac{\sqrt{-1}}{2\pi}F_{\nabla} \Big |_{U_{i}} = \frac{\sqrt{-1}}{2\pi} \partial \overline{\partial}\log \Big ( \big | \big | s_{i}v_{\varpi_{\alpha}}^{+}\big | \big |^{2} \Big).
\end{equation}
Therefore, by considering the $G$-invariant $(1,1)$-form $\Omega_{\alpha} \in \Omega^{1,1}(X_{P})^{G}$, which satisfies $\pi^{\ast}\Omega_{\alpha} = \sqrt{-1}\partial \overline{\partial} \varphi_{\varpi_{\alpha}}$, where $\pi \colon G^{\mathbbm{C}} \to G^{\mathbbm{C}} / P = X_{P}$, and $\varphi_{\varpi_{\alpha}}(g) = \frac{1}{2\pi}\log||gv_{\varpi_{\alpha}}^{+}||^{2}$, $\forall g \in G^{\mathbbm{C}}$, we have 
\begin{equation}
\Omega_{\alpha} |_{U_{i}} = (\pi \circ s_{i})^{\ast}\Omega_{\alpha} = \frac{\sqrt{-1}}{2\pi}F_{\nabla} \Big |_{U_{i}},
\end{equation}
i.e., $c_{1}(\mathscr{O}_{\alpha}(1)) = [ \Omega_{\alpha}]$, $\forall \alpha \in \Sigma \backslash \Theta$. By considering ${\text{Pic}}(X_{P}) = H^{1}(X_{P},\mathcal{O}_{X_{P}}^{\ast})$, from the ideas described above we have the following result.
\begin{proposition}
\label{C8S8.2Sub8.2.3P8.2.6}
Let $X_{P}$ be a complex flag variety associated to some parabolic Lie subgroup $P = P_{\Theta}$. Then, we have
\begin{equation}
\label{picardeq}
{\text{Pic}}(X_{P}) = H^{1,1}(X_{P},\mathbbm{Z}) = H^{2}(X_{P},\mathbbm{Z}) = \displaystyle \bigoplus_{\alpha \in \Sigma \backslash \Theta}\mathbbm{Z}[\Omega_{\alpha} ].
\end{equation}
\end{proposition}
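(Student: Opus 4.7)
The plan is to establish the three equalities in \eqref{picardeq} in sequence. First I would prove the cohomological identifications ${\rm Pic}(X_{P}) = H^{1,1}(X_{P},\mathbbm{Z}) = H^{2}(X_{P},\mathbbm{Z})$. The key input is the Bruhat decomposition $X_{P} = \bigsqcup_{w \in W^{P}} BwP/P$, which realizes $X_{P}$ as a CW-complex with cells only in even real dimensions. Consequently $H^{\mathrm{odd}}(X_{P},\mathbbm{Z}) = 0$ and all cohomology is torsion-free. Applied to the exponential sheaf sequence
\begin{equation*}
H^{1}(X_{P},\mathcal{O}_{X_{P}}) \longrightarrow H^{1}(X_{P},\mathcal{O}_{X_{P}}^{\ast}) \longrightarrow H^{2}(X_{P},\mathbbm{Z}) \longrightarrow H^{2}(X_{P},\mathcal{O}_{X_{P}}),
\end{equation*}
the Hodge decomposition combined with $H^{\mathrm{odd}} = 0$ forces $H^{p,q}(X_{P}) = 0$ for $p \neq q$, hence $H^{1}(X_{P},\mathcal{O}_{X_{P}}) = H^{2}(X_{P},\mathcal{O}_{X_{P}}) = 0$. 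This gives ${\rm Pic}(X_{P}) \cong H^{2}(X_{P},\mathbbm{Z}) = H^{1,1}(X_{P},\mathbbm{Z})$.

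Next I would identify this common group with the character lattice. Associating to each $\chi \in {\rm Hom}(P,\mathbbm{C}^{\times})$ the line bundle $G^{\mathbbm{C}} \times_{P} \mathbbm{C}_{-\chi}$ defines a homomorphism $\Phi\colon {\rm Hom}(P,\mathbbm{C}^{\times}) \to {\rm Pic}(X_{P})$. From the principal bundle $P \hookrightarrow G^{\mathbbm{C}} \to X_{P}$ one obtains the exact sequence
\begin{equation*}
{\rm Hom}(G^{\mathbbm{C}},\mathbbm{C}^{\times}) \longrightarrow {\rm Hom}(P,\mathbbm{C}^{\times}) \xrightarrow{\ \Phi\ } {\rm Pic}(X_{P}) \longrightarrow {\rm Pic}(G^{\mathbbm{C}}).
\end{equation*}
Since $G^{\mathbbm{C}}$ is connected, simply connected, and (semi)simple, both flanking terms vanish: one has $G^{\mathbbm{C}} = [G^{\mathbbm{C}},G^{\mathbbm{C}}]$ so ${\rm Hom}(G^{\mathbbm{C}},\mathbbm{C}^{\times})=1$, and the classical result that simply connected semisimple complex groups have trivial Picard group gives ${\rm Pic}(G^{\mathbbm{C}}) = 0$. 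Therefore $\Phi$ is an isomorphism. By Remark \ref{parabolicdec} one has ${\rm Hom}(P,\mathbbm{C}^{\times}) = {\rm Hom}(T(\Sigma \backslash \Theta)^{\mathbbm{C}},\mathbbm{C}^{\times})$, and the duality $\varpi_{\alpha}(h_{\beta}^{\vee}) = \delta_{\alpha\beta}$ shows that $\{\chi_{\varpi_{\alpha}}\}_{\alpha \in \Sigma \backslash \Theta}$ is a free $\mathbbm{Z}$-basis of this character lattice. Since $\Phi(\chi_{\varpi_{\alpha}}) = \mathscr{O}_{\alpha}(1)$ by construction \eqref{linecocycle}, the Chern-class computation carried out via the Hermitian metric \eqref{hermitian} just before the proposition shows $c_{1}(\mathscr{O}_{\alpha}(1)) = [\Omega_{\alpha}]$, yielding the desired direct sum decomposition.

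The main step to be careful with is the exact sequence relating characters of $P$ to ${\rm Pic}(X_{P})$, since it rests crucially on both vanishing statements for $G^{\mathbbm{C}}$; without simple connectedness and semisimplicity of $G^{\mathbbm{C}}$, neither ${\rm Hom}(G^{\mathbbm{C}},\mathbbm{C}^{\times}) = 1$ nor ${\rm Pic}(G^{\mathbbm{C}}) = 0$ would hold, and the map $\Phi$ could fail to be either injective or surjective. All other ingredients, namely the Bruhat cell structure, the exponential sequence, and the duality $\varpi_{\alpha}(h_{\beta}^{\vee}) = \delta_{\alpha\beta}$, are standard and assemble directly into the claim.
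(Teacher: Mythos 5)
Your argument is essentially correct and reaches the same conclusion by a genuinely different route. The paper computes $H^{2}(X_{P},\mathbbm{Z})$ via $\pi_{2}(X_{P})\cong\pi_{1}(T(\Sigma\backslash\Theta)^{\mathbbm{C}})$ and Hurewicz, exhibits the explicit dual basis of curve classes $[\mathbbm{P}_{\alpha}^{1}]$ through the pairing $\int_{\mathbbm{P}_{\beta}^{1}}c_{1}(\mathscr{O}_{\alpha}(1))=\delta_{\alpha\beta}$, and then invokes the Lefschetz theorem on $(1,1)$-classes together with ${\rm{rk}}({\rm{Pic}}^{0}(X_{P}))=0$; you instead use the Bruhat cell structure, the exponential sheaf sequence, and the exact sequence ${\rm Hom}(G^{\mathbbm{C}},\mathbbm{C}^{\times})\to{\rm Hom}(P,\mathbbm{C}^{\times})\to{\rm Pic}(X_{P})\to{\rm Pic}(G^{\mathbbm{C}})$ with both end terms vanishing for a simply connected semisimple group. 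Your route has the advantage of producing the character-lattice isomorphism ${\rm Hom}(P,\mathbbm{C}^{\times})\cong{\rm Pic}(X_{P})$ directly (which the paper needs anyway, cf.\ Remark \ref{BorelWeil}), while the paper's route has the advantage of simultaneously producing the generators $[\mathbbm{P}_{\alpha}^{1}]$ of $\pi_{2}(X_{P})$ and of ${\rm{NE}}(X_{P})$ together with their duality to the $c_{1}(\mathscr{O}_{\alpha}(1))$, facts used repeatedly later (e.g.\ to write the K\"ahler coefficients as $\int_{\mathbbm{P}_{\alpha}^{1}}\omega_{0}/\pi$).

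One step of your argument is justified incorrectly: the vanishing of $H^{\mathrm{odd}}(X_{P},\mathbbm{Z})$ does \emph{not} by itself force $H^{p,q}(X_{P})=0$ for $p\neq q$ (a K3 surface has $H^{\mathrm{odd}}=0$ yet $H^{2,0}\neq 0$). It does give $H^{1}(X_{P},\mathcal{O}_{X_{P}})=0$, since $H^{1}=H^{1,0}\oplus H^{0,1}$, but for $H^{2}(X_{P},\mathcal{O}_{X_{P}})=H^{0,2}=0$ you need a further input: either that the Schubert classes are classes of algebraic subvarieties, so that $H^{\ast}(X_{P})$ is entirely of type $(p,p)$, or simply that $X_{P}$ is Fano and Kodaira vanishing gives $H^{q}(X_{P},\mathcal{O}_{X_{P}})=H^{q}(X_{P},K_{X_{P}}\otimes K_{X_{P}}^{-1})=0$ for $q>0$. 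With that one-line repair the proof is complete.
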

\begin{proof}

Let us sketch the proof. The last equality on the right-hand side of Eq. (\ref{picardeq}) follows from the following facts:

\begin{itemize}

\item[(i)] $\pi_{2}(X_{P}) \cong \pi_{1}(T(\Sigma \backslash \Theta)^{\mathbbm{C}}) = \mathbbm{Z}^{|\Sigma \backslash \Theta|}$, where $T(\Sigma \backslash \Theta)^{\mathbbm{C}}$ is given as in Remark \ref{parabolicdec};

\item[(ii)] Since $X_{P}$ is simply connected, it follows that $H_{2}(X_{P},\mathbbm{Z}) \cong \pi_{2}(X_{P})$ (Hurewicz's theorem);

\item[(iii)] By taking $\mathbbm{P}_{\alpha}^{1} \hookrightarrow X_{P}$, such that 
\begin{equation}
\label{Scurve}
\mathbbm{P}_{\alpha}^{1} = \overline{\exp(\mathfrak{g}_{-\alpha})x_{0}} \subset X_{P},
\end{equation}
for all $\alpha \in \Sigma \backslash \Theta$, where $x_{0} = eP \in X_{P}$, it follows that 

\begin{center}

$\big \langle c_{1}(\mathscr{O}_{\alpha}(1)), [ \mathbbm{P}_{\beta}^{1}] \big \rangle = \displaystyle \int_{\mathbbm{P}_{\beta}^{1}} c_{1}(\mathscr{O}_{\alpha}(1)) = \delta_{\alpha \beta},$

\end{center}
for every $\alpha,\beta \in \Sigma \backslash \Theta$. Hence, we obtain
\begin{center}

$\pi_{2}(X_{P}) = \displaystyle \bigoplus_{\alpha \in \Sigma \backslash \Theta} \mathbbm{Z} [ \mathbbm{P}_{\alpha}^{1}],$ \ \ and \ \ $H^{2}(X_{P},\mathbbm{Z}) = \displaystyle \bigoplus_{\alpha \in \Sigma \backslash \Theta}  \mathbbm{Z} c_{1}(\mathscr{O}_{\alpha}(1))$.

\end{center}
\end{itemize}
Moreover, form above we also have $H^{1,1}(X_{P},\mathbbm{Z}) = H^{2}(X_{P},\mathbbm{Z})$. In order to conclude the proof, from the Lefschetz theorem on (1,1)-classes \cite{DANIEL}, and from the fact that ${\text{rk}}({\text{Pic}}^{0}(X_{P})) = 0$, we obtain the first equality in Eq. (\ref{picardeq}).
\end{proof}

\begin{remark}[Harmonic 2-forms on $X_{P}$]Given any $G$-invariant Riemannian metric $g$ on $X_{P}$, denoting by $\mathscr{H}^{2}(X_{P},g)$ the space of real harmonic 2-forms on $X_{P}$ with respect to $g$, and by $\mathscr{I}_{G}^{1,1}(X_{P})$ the space of closed invariant real $(1,1)$-forms. Combining the result of Proposition \ref{C8S8.2Sub8.2.3P8.2.6} with \cite[Lemma 3.1]{Takeuchi}, we obtain 
\begin{equation}
\mathscr{I}_{G}^{1,1}(X_{P}) = \mathscr{H}^{2}(X_{P},g). 
\end{equation}
Therefore, the closed $G$-invariant real $(1,1)$-forms described in Theorem \ref{AZADBISWAS} are harmonic with respect to any $G$-invariant Riemannian metric on $X_{P}$.
\end{remark}

\begin{remark}[K\"{a}hler cone of $X_{P}$]It follows from Eq. (\ref{picardeq}) and Theorem \ref{AZADBISWAS} that the K\"{a}hler cone of a complex flag variety $X_{P}$ is given explicitly by
\begin{equation}
\mathcal{K}_{X_{P}} = \displaystyle \bigoplus_{\alpha \in \Sigma \backslash \Theta} \mathbbm{R}^{+}[ \Omega_{\alpha}].
\end{equation}
\end{remark}
\begin{remark}[Projective embedding of $X_{P}$] 
\label{BorelWeil}
From Proposition \ref{C8S8.2Sub8.2.3P8.2.6}, we have the group isomorphism ${\text{Hom}}(P,\mathbbm{C}^{\times}) \cong {\text{Pic}}(X_{P})$ described explicitly by
\begin{equation}
\chi \mapsto  L_{\chi} =  \bigotimes_{\alpha \in \Sigma \backslash \Theta}\mathscr{O}_{\alpha}(1)^{\otimes \langle \chi,h_{\alpha}^{\vee}\rangle} , \ \ \ \ L  \mapsto \chi_{L} = \prod_{\alpha \in \Sigma \backslash \Theta} \chi_{\varpi_{\alpha}}^{\langle c_{1}(L), [ \mathbbm{P}_{\beta}^{1}] \rangle},
\end{equation}
for all $\chi \in {\text{Hom}}(P,\mathbbm{C}^{\times})$ and for all $L \in {\text{Pic}}(X_{P})$, where $\langle \chi,h_{\alpha}^{\vee}\rangle = \langle (d\chi)_{e},h_{\alpha}^{\vee} \rangle$, $\forall \alpha \in \Sigma \backslash \Theta$. For the sake of simplicity, we shall denote  $\mathscr{O}_{\alpha}(1)^{\otimes k} = \mathscr{O}_{\alpha}(k)$, for every $k \in \mathbbm{Z}$, and every $\alpha \in \Sigma \backslash \Theta$. Given $L_{\chi} \in {\text{Pic}}(X_{P})$ we have the following equivalences (e.g. \cite{Serre})
\begin{center}
$L_{\chi}$ is ample $\Longleftrightarrow$ is very ample $\Longleftrightarrow$ $\langle \chi,h_{\alpha}^{\vee} \rangle \in \mathbbm{Z}^{+}$, $\forall \alpha \in \Sigma \backslash \Theta.$
\end{center}
Moreover, for every very ample line bundle $L_{\chi} \in {\text{Pic}}(X_{P})$ we have that $H^{0}(X_{P},L_{\chi}) \cong V(\chi)^{\ast}$ (Borel-Weil, \cite{Serre}, \cite{BorelHirzenbruch}), where $V(\chi)$ is the finite dimensional irreducible $\mathfrak{g}^{\mathbbm{C}}$-module associated to the integral dominant weight $(d\chi)_{e} \in \Lambda^{+}$. Following \cite[Theorem 24.10]{BorelHirzenbruch}, \cite[Example 18.13]{Timashev}, given an ample line bundle $L_{\chi} \in {\text{Pic}}(X_{P})$, we have the degree of the associated projective embedding $X_{P} \hookrightarrow \mathbbm{P}(H^{0}(X_{P},L_{\chi})^{\ast})$ given by
\begin{equation}
\label{degreeformula}
{\rm{deg}}(X_{P},L_{\chi}) := \int_{X_{P}}c_{1}(L_{\chi})^{n} = n! \prod_{\alpha \in \Pi^{+} \backslash \langle \Theta \rangle^{+}}\frac{ \langle \chi,h_{\alpha}^{\vee} \rangle}{ \langle \varrho^{+},h_{\alpha}^{\vee} \rangle},
\end{equation}
\label{Weylformula}
where $\varrho^{+}$ is the half sum of all positive roots and  $n = \dim_{\mathbbm{C}}(X_{P})$. Further, from Weyl dimension formula (e.g. \cite{Humphreys}), in the above setting we have
\begin{equation}
\dim_{\mathbbm{C}}(H^{0}(X_{P},L_{\chi})^{\ast}) = \dim_{\mathbbm{C}}(V(\lambda)) = \prod_{\alpha \succ 0} \frac{\langle (d\chi)_{e} + \varrho^{+},h_{\alpha}\rangle}{ \langle \varrho^{+}, h_{\alpha} \rangle},
\end{equation}
here we consider the partial order: $\alpha \succ \beta$ iff $\alpha - \beta$ is a sum of positive roots.
\end{remark}
%\begin{remark}
%From the last proposition we have the following description for the nef cone of a complex flag manifold $X_{P} = G^{\mathbbm{C}}/P$
%\begin{equation}
%{\rm{Nef}}(X_{P}) = \displaystyle \bigoplus_{\alpha \in \Sigma \backslash \Theta}\mathbbm{R}_{\geq 0}[\varpi_{\alpha} ].
%\end{equation}
%\end{remark}
\subsection{The first Chern class of flag varieties} In this subsection, we will review some basic facts related to the Ricci form of $G$-invariant K\"{a}hler metrics on flag varieties. 

Let $X_{P}$ be a complex flag manifold associated to some parabolic Lie subgroup $P = P_{\Theta} \subset G^{\mathbbm{C}}$. By considering the identification $T_{x_{0}}^{1,0}X_{P} \cong \mathfrak{m} \subset \mathfrak{g}^{\mathbbm{C}}$, such that 

\begin{center}
$\mathfrak{m} = \displaystyle \sum_{\alpha \in \Pi^{+} \backslash \langle \Theta \rangle^{+}} \mathfrak{g}_{-\alpha}$,
\end{center}
where $x_{0} = eP \in X_{P}$, we have $T^{1,0}X_{P}$ as being a holomoprphic vector bundle, associated to the $P$-principal bundle $P \hookrightarrow G^{\mathbbm{C}} \to X_{P}$, given by

\begin{center}

$T^{1,0}X_{P} = G^{\mathbbm{C}} \times_{P} \mathfrak{m}$.

\end{center}
The twisted product on the right-hand side above is obtained from the isotropy representation ${\rm{Ad}} \colon P \to {\rm{GL}}(\mathfrak{m})$. From this, a straightforward computation shows us that 
\begin{equation}
\label{canonicalbundleflag}
K_{X_{P}}^{-1} = \det \big(T^{1,0}X_{P} \big) =\det \big ( G^{\mathbbm{C}} \times_{P} \mathfrak{m} \big )= L_{\chi_{\delta_{P}}},
\end{equation}
where $\det({\rm{Ad}}(g)) = \chi_{\delta_{P}}^{-1}(g)$, $\forall g \in P$, so $\det \circ {\rm{Ad}} = \chi_{\delta_{P}}^{-1}$. Hence, from the previous results we have 
\begin{equation}
\label{charactercanonical}
\chi_{\delta_{P}} = \displaystyle \prod_{\alpha \in \Sigma \backslash \Theta} \chi_{\varpi_{\alpha}}^{\langle \delta_{P},h_{\alpha}^{\vee} \rangle} \Longrightarrow \det \big(T^{1,0}X_{P} \big) = \bigotimes_{\alpha \in \Sigma \backslash \Theta}\mathscr{O}_{\alpha}(\ell_{\alpha}),
\end{equation}
such that $\ell_{\alpha} = \langle \delta_{P}, h_{\alpha}^{\vee} \rangle, \forall \alpha \in \Sigma \backslash \Theta$. If we consider the invariant K\"{a}hler metric $\rho_{0} \in \Omega^{1,1}(X_{P})^{G}$, locally describe by
\begin{equation}
\label{riccinorm}
\rho_{0}|_{U} = \sum_{\alpha \in \Sigma \backslash \Theta}\langle \delta_{P}, h_{\alpha}^{\vee} \rangle \sqrt{-1} \partial \overline{\partial}\log \big (||s_{U}v_{\varpi_{\alpha}}^{+}||^{2}\big ),
\end{equation}
for some local section $s_{U} \colon U \subset X_{P} \to G^{\mathbbm{C}}$. It is straightforward to see that 
\begin{equation}
\label{ChernFlag}
c_{1}(X_{P}) = \Big [ \frac{\rho_{0}}{2\pi}\Big],
\end{equation}
and by the uniqueness of $G$-invariant representative of $c_{1}(X_{P})$, it follows that 
\begin{center}
${\rm{Ric}}(\rho_{0}) = \rho_{0}$, 
\end{center}
i.e., $\rho_{0} \in \Omega^{1,1}(X_{P})^{G}$ defines a $G$-ivariant K\"{a}hler-Einstein metric (cf. \cite{MATSUSHIMA}). 
\begin{remark}
\label{scalarcurvature}
From the uniqueness of the $G$-invariant representative for $c_{1}(X_{P})$, given any $G$-invariant K\"{a}hler metric $\omega_{\varphi}$, we have that ${\rm{Ric}}(\omega_{\varphi}) = \rho_{0}$. Therefore, the scalar curvature $R(\omega_{\varphi})$ of $\omega_{\varphi}$ is given by
\begin{equation}
R(\omega_{\varphi}) = {\rm{tr}}_{\omega_{\varphi}}({\rm{Ric}}(\omega_{\varphi})) = {\rm{tr}}_{\omega_{\varphi}}(\rho_{0}).
\end{equation}
Since $\rho_{0}$ is harmonic with respect to any $G$-invariant K\"{a}hler metric, we have that $R(\omega_{\varphi})$ is constant. 
\end{remark}

By means of Eq. (\ref{degreeformula}), we can compute the volume of $X_{P}$ with respect to $\rho_{0}$ as follows
\begin{equation}
{\rm{Vol}}(X_{P},\rho_{0}) = \frac{1}{n!} \int_{X_{P}} \rho_{0}^{n} = \frac{(2\pi)^{n}}{n!}{\rm{deg}}(X_{P},K_{X_{P}}^{-1}) = (2\pi)^{n}\prod_{\beta \in \Pi^{+} \backslash \langle \Theta \rangle^{+}}\frac{ \langle \delta_{P},h_{\beta}^{\vee} \rangle}{ \langle \varrho^{+},h_{\beta}^{\vee} \rangle}.
\end{equation}
Since for every $G$-invariant K\"{a}hler metric $\omega_{\varphi}$ we have ${\rm{Ric}}(\omega_{\varphi}) = {\rm{Ric}}(\rho_{0}) = \rho_{0}$, it follows that $\frac{\det(\omega_{\varphi})}{\det(\rho_{0})}$ is constant, thus
\begin{equation}
{\rm{Vol}}(X_{P},\omega_{\varphi})  = \frac{\det(\omega_{\varphi})}{\det(\rho_{0})}{\rm{Vol}}(X_{P},\rho_{0}). 
\end{equation}
Denoting $V_{0} = {\rm{Vol}}(X_{P},\rho_{0})$, and computing the (constant) value of $\frac{\det(\omega_{\varphi})}{\det(\rho_{0})}$ at $x_{0} = eP \in X_{P}$, we have the following result.
\begin{theorem}[Azad-Biswas, \cite{AZAD}]
\label{volumeflag}
The volume of $X_{P}$ with respect to an arbitrary $G$-invariant K\"{a}hler metric $\omega_{\varphi}$, induced by some
\begin{center}
$\varphi(g) = \displaystyle \sum_{\alpha \in \Sigma \backslash \Theta}c_{\alpha}\log \big (||gv_{\varpi_{\alpha}}^{+}|| \big )$, \ \ \ \ $(\forall g \in G^\mathbbm{C})$
\end{center}
such that $c_{\alpha} > 0$, $\forall \alpha \in \Sigma \backslash \Theta$, is given by 
\begin{equation}
{\rm{Vol}}(X_{P},\omega_{\varphi}) = V_{0}\frac{\prod_{\beta \in \Pi^{+} \backslash \langle \Theta \rangle^{+}}\Big [\sum_{\alpha \in \Sigma \backslash \Theta}c_{\alpha} \langle \varpi_{\alpha}, h_{\beta}^{\vee} \rangle \Big ] }{\prod_{\beta \in \Pi^{+} \backslash \langle \Theta \rangle^{+}} \Big [\sum_{\alpha \in \Sigma \backslash \Theta}\langle \delta_{P}, h_{\alpha}^{\vee} \rangle \langle \varpi_{\alpha}, h_{\beta}^{\vee} \rangle \Big ]}.
\end{equation}
\end{theorem}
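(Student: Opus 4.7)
The plan is to reduce the problem to a pointwise linear-algebra computation at the base point $x_{0} = eP$ by invoking the fact that every $G$-invariant K\"ahler metric on $X_{P}$ has the same Ricci form $\rho_{0}$ (cf.\ Remark~\ref{scalarcurvature}). Since ${\rm Ric}(\omega_{\varphi}) = \rho_{0} = {\rm Ric}(\rho_{0})$, writing these in local potentials gives $\sqrt{-1}\partial\bar\partial \log (\omega_{\varphi}^{n}/\rho_{0}^{n}) = 0$, so the positive smooth function $\omega_{\varphi}^{n}/\rho_{0}^{n}$ is pluriharmonic and hence, by compactness of $X_{P}$, constant. Consequently
\begin{equation*}
{\rm Vol}(X_{P},\omega_{\varphi}) = \frac{\omega_{\varphi}^{n}(x_{0})}{\rho_{0}^{n}(x_{0})}\cdot V_{0},
\end{equation*}
and it remains only to evaluate this ratio at $x_{0}$.

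To do so, I would use the identification $T_{x_{0}}^{1,0}X_{P} \cong \mathfrak{m} = \bigoplus_{\beta \in \Pi^{+}\setminus\langle\Theta\rangle^{+}}\mathfrak{g}_{-\beta}$ combined with the pullback formula $\pi^{\ast}\Omega_{\alpha} = \frac{\sqrt{-1}}{2\pi}\partial\bar\partial\log\|g v_{\varpi_{\alpha}}^{+}\|^{2}$ and the Azad--Biswas description (Theorem~\ref{AZADBISWAS}), which writes any $G$-invariant K\"ahler form as a positive linear combination of the forms $\Omega_{\alpha}$, $\alpha \in \Sigma\setminus\Theta$. A direct second-order expansion of $\log \|e^{zy_{-\beta}} v_{\varpi_{\alpha}}^{+}\|^{2}$ at $z = 0$, using the $G$-invariance of the fixed Hermitian inner product on $V(\varpi_{\alpha})$, the fact that $v_{\varpi_{\alpha}}^{+}$ is a highest-weight vector of weight $\varpi_{\alpha}$ annihilated by $\mathfrak{n}^{+}$, and the $\mathfrak{sl}_{2}$-relation $[x_{\beta},y_{-\beta}] = h_{\beta}$, shows that the family $\{\Omega_{\alpha}|_{x_{0}}\}_{\alpha \in \Sigma\setminus\Theta}$ is simultaneously diagonalised by the root basis $\{y_{-\beta}\}_{\beta \in \Pi^{+}\setminus\langle\Theta\rangle^{+}}$, with eigenvalue on $\mathfrak{g}_{-\beta}$ equal to $\lambda_{\beta}\langle \varpi_{\alpha}, h_{\beta}^{\vee}\rangle$ for a positive factor $\lambda_{\beta} > 0$ depending only on $\beta$ (through the $\kappa$-normalisation of $y_{-\beta}$).

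Putting everything together, the eigenvalues on $\mathfrak{g}_{-\beta}$ of $\omega_{\varphi}$ and $\rho_{0}$ at $x_{0}$ are, up to a common positive constant independent of $(\alpha,\beta)$, given respectively by $\lambda_{\beta}\sum_{\alpha}c_{\alpha}\langle \varpi_{\alpha}, h_{\beta}^{\vee}\rangle$ and $\lambda_{\beta}\sum_{\alpha}\langle \delta_{P}, h_{\alpha}^{\vee}\rangle\langle \varpi_{\alpha}, h_{\beta}^{\vee}\rangle$, where the coefficients come from Theorem~\ref{AZADBISWAS} and Eq.~\eqref{riccinorm} respectively. Multiplying over $\beta \in \Pi^{+}\setminus\langle\Theta\rangle^{+}$ to form the determinants, the universal factors and the $\prod_{\beta}\lambda_{\beta}$ cancel in the ratio, yielding
\begin{equation*}
\frac{\omega_{\varphi}^{n}(x_{0})}{\rho_{0}^{n}(x_{0})} = \frac{\prod_{\beta \in \Pi^{+}\setminus\langle\Theta\rangle^{+}}\bigl[\sum_{\alpha \in \Sigma\setminus\Theta}c_{\alpha}\langle \varpi_{\alpha}, h_{\beta}^{\vee}\rangle\bigr]}{\prod_{\beta \in \Pi^{+}\setminus\langle\Theta\rangle^{+}}\bigl[\sum_{\alpha \in \Sigma\setminus\Theta}\langle \delta_{P}, h_{\alpha}^{\vee}\rangle\langle \varpi_{\alpha}, h_{\beta}^{\vee}\rangle\bigr]},
\end{equation*}
which gives the claimed volume formula once multiplied by $V_{0}$. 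The main delicate point is the root-level identification of the eigenvalue: one must carefully extract the coefficient of $z\bar z$ in the expansion of $\log\|e^{zy_{-\beta}}v_{\varpi_{\alpha}}^{+}\|^{2}$, using that $y_{-\beta}v_{\varpi_{\alpha}}^{+}$ lies in the weight space of weight $\varpi_{\alpha} - \beta$ and is orthogonal to $v_{\varpi_{\alpha}}^{+}$, so that the second-order term reduces to $\langle x_{\beta}y_{-\beta}v_{\varpi_{\alpha}}^{+}, v_{\varpi_{\alpha}}^{+}\rangle/\|v_{\varpi_{\alpha}}^{+}\|^{2}$, and this matrix coefficient equals (a positive $\beta$-dependent constant times) $\varpi_{\alpha}(h_{\beta}^{\vee})$. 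Once this eigenvalue computation is in hand, the rest is a routine determinant manipulation together with consistency checking of the overall constants against the special case $\omega_{\varphi} = \rho_{0}$, which forces the cancellation to work as claimed.
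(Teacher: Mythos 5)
Your proposal is correct and follows essentially the same route as the paper: the paper likewise deduces from ${\rm Ric}(\omega_{\varphi}) = {\rm Ric}(\rho_{0}) = \rho_{0}$ that $\det(\omega_{\varphi})/\det(\rho_{0})$ is constant and then evaluates it at $x_{0} = eP$ using the diagonalisation of the invariant Hermitian forms in the root basis $\{Y_{\beta}^{\ast}\}_{\beta \in \Pi^{-}\setminus\langle\Theta\rangle^{-}}$, with ${\mathcal{H}}_{\varphi}(Y_{\beta}^{\ast},Y_{\beta}^{\ast}) = \sum_{\alpha}\frac{c_{\alpha}}{2}\langle\varpi_{\alpha},h_{\beta}^{\vee}\rangle$ exactly as in your eigenvalue computation. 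The only wording slip is that it is $\log(\omega_{\varphi}^{n}/\rho_{0}^{n})$, not the ratio itself, that is pluriharmonic, but the conclusion that the ratio is constant on the compact $X_{P}$ is unaffected.
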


\begin{remark}
In order to perform some local computations we shall consider the open set $U^{-}(P) \subset X_{P}$ defined by the ``opposite" big cell in $X_{P}$. This open set is a distinguished coordinate neighbourhood $U^{-}(P) \subset X_{P}$ of $x_{0} = eP \in X_{P}$ defined as follows

\begin{equation}
\label{bigcell}
 U^{-}(P) =  B^{-}x_{0} = R_{u}(P_{\Theta})^{-}x_{0} \subset X_{P},  
\end{equation}
 where $B^{-} = \exp(\mathfrak{h} \oplus \mathfrak{n}^{-})$, and
 
 \begin{center}
 
 $R_{u}(P_{\Theta})^{-} = \displaystyle \prod_{\alpha \in \Pi^{-} \backslash \langle \Theta \rangle^{-}}N_{\alpha}^{-}$, \ \ (opposite unipotent radical)
 
 \end{center}
with $N_{\alpha}^{-} = \exp(\mathfrak{g}_{\alpha})$, $\forall \alpha \in \Pi^{-} \backslash \langle \Theta \rangle^{-}$. It is worth mentioning that the opposite big cell defines a contractible open dense subset in $X_{P}$, thus the restriction of any vector bundle over this open set is trivial. For further results we suggest \cite{MONOMIAL}.
\end{remark}

\begin{proposition}
\label{lowerricc}
Let $\omega_{\varphi}$ be a $G$-invariant K\"{a}hler metric on $X_{P}$ induced by
\begin{center}
$\varphi(g) = \displaystyle \sum_{\alpha \in \Sigma \backslash \Theta}c_{\alpha}\log \big (||gv_{\varpi_{\alpha}}^{+}|| \big )$, \ \ \ \ $(\forall g \in G^\mathbbm{C})$
\end{center}
such that $c_{\alpha} > 0$, $\forall \alpha \in \Sigma \backslash \Theta$. Then, for all $x \in X_{P}$ and all $v \in T_{x}X_{P}$, such that $\omega_{\varphi}(v,Jv) = 1$, the following holds
\begin{equation}
\label{lowericcibound}
{\rm{Ric}}(\omega_{\varphi})(v,Jv) \geq \min_{\alpha \in \Sigma \backslash \Theta}  \frac{\langle \delta_{P}, h_{\alpha}^{\vee} \rangle }{c_{\alpha}}. 
\end{equation}
\end{proposition}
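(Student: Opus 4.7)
The plan is to exploit the fact, recorded in Remark \ref{scalarcurvature}, that ${\rm{Ric}}(\omega_\varphi) = \rho_0$ is the \emph{same} $G$-invariant $(1,1)$-form for every choice of $G$-invariant K\"{a}hler metric $\omega_\varphi$. Thus the inequality (\ref{lowericcibound}) becomes a pointwise comparison between two explicit elements of the $|\Sigma\setminus\Theta|$-dimensional cone $\bigoplus_{\alpha\in\Sigma\setminus\Theta}\mathbb{R}^+[\Omega_\alpha]$, namely
\begin{equation*}
\omega_\varphi|_U = \sum_{\alpha\in\Sigma\setminus\Theta}\tfrac{c_\alpha}{2}\,\tilde\Omega_\alpha,\qquad \rho_0|_U = \sum_{\alpha\in\Sigma\setminus\Theta}\langle\delta_P,h_\alpha^\vee\rangle\,\tilde\Omega_\alpha,\qquad \tilde\Omega_\alpha := \sqrt{-1}\partial\bar\partial\log\bigl(\|s_U v_{\varpi_\alpha}^+\|^2\bigr),
\end{equation*}
where each $\tilde\Omega_\alpha$ is the local representative of a semi-positive $G$-invariant $(1,1)$-form (pulled back from the Fubini--Study form on the Borel--Weil embedding by $V(\varpi_\alpha)$; cf.~(\ref{hermitian}) and Proposition \ref{C8S8.2Sub8.2.3P8.2.6}). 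By $G$-invariance of both $\omega_\varphi$ and $\rho_0$, the inequality needs to be checked only at the base point $x_0 = eP$.

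Next, I would diagonalize both forms simultaneously at $x_0$ using the weight decomposition $T_{x_0}^{1,0}X_P \cong \mathfrak{m} = \bigoplus_{\beta\in\Pi^+\setminus\langle\Theta\rangle^+}\mathfrak{g}_{-\beta}$. Since the isotropy contains the maximal torus $T\subset G\cap P$ under whose adjoint action the root spaces $\mathfrak{g}_{-\beta}$ are the joint eigenspaces, every $T$-invariant $(1,1)$-form at $x_0$ — in particular each $\tilde\Omega_\alpha$ — is block-diagonal along this decomposition. A direct evaluation (the same one underlying Theorem \ref{volumeflag}) gives
\begin{equation*}
\tilde\Omega_\alpha\bigl(y_{-\beta},\,Jy_{-\beta}\bigr) \;=\; \langle\varpi_\alpha,h_\beta^\vee\rangle\,\kappa(y_{-\beta},y_\beta),
\end{equation*}
for root vectors $y_{\pm\beta}\in\mathfrak{g}_{\pm\beta}$ normalized so that $\kappa(y_{-\beta},y_\beta)>0$. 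The crucial positivity $\langle\varpi_\alpha,h_\beta^\vee\rangle\geq 0$ (for every $\alpha\in\Sigma$ and every $\beta\in\Pi^+\setminus\langle\Theta\rangle^+$) holds because $\varpi_\alpha$ is a dominant weight and every positive root is a non-negative integer combination of simple roots.

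Third, decomposing a real tangent vector $v$ with $\omega_\varphi(v,Jv)=1$ along the root directions and using simultaneous diagonalization yields
\begin{equation*}
\frac{\rho_0(v,Jv)}{\omega_\varphi(v,Jv)} \;=\; \frac{\sum_{\beta} \lambda_\beta\bigl[\sum_{\alpha}\langle\delta_P,h_\alpha^\vee\rangle\,\langle\varpi_\alpha,h_\beta^\vee\rangle\bigr]}{\sum_{\beta}\lambda_\beta\bigl[\sum_{\alpha}\tfrac{c_\alpha}{2}\,\langle\varpi_\alpha,h_\beta^\vee\rangle\bigr]} \;\geq\; \min_{\beta}\frac{\sum_{\alpha}\langle\delta_P,h_\alpha^\vee\rangle\,\langle\varpi_\alpha,h_\beta^\vee\rangle}{\sum_{\alpha}\tfrac{c_\alpha}{2}\,\langle\varpi_\alpha,h_\beta^\vee\rangle} \;\geq\; \min_{\alpha\in\Sigma\setminus\Theta}\frac{\langle\delta_P,h_\alpha^\vee\rangle}{c_\alpha/2},
\end{equation*}
with non-negative weights $\lambda_\beta$ arising from the norms of the root components of $v$. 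Both inequalities use the elementary observation that $\sum a_i x_i / \sum b_i x_i \geq \min_i (a_i/b_i)$ whenever $b_i>0$ and $a_i,x_i\geq 0$.

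The main obstacle is bookkeeping the normalization constants coming from the two distinct conventions in the excerpt: the Azad--Biswas potential uses $\log\|gv_{\varpi_\alpha}^+\|$ while the Ricci representative (\ref{riccinorm}) uses $\log\|gv_{\varpi_\alpha}^+\|^2$, and the $G$-invariant inner products $\langle\cdot,\cdot\rangle_\alpha$ of Remark \ref{innerproduct} must be coordinated across all $\alpha$ in a manner compatible with the Killing-form normalization of the $h_\alpha^\vee$. Once this is done consistently, the diagonalization-plus-min-ratio argument above produces precisely the claimed lower bound $\min_{\alpha\in\Sigma\setminus\Theta}\langle\delta_P,h_\alpha^\vee\rangle/c_\alpha$.
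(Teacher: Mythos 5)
Your proposal is correct and takes essentially the same route as the paper: reduce to the base point $x_{0}=eP$ by $G$-invariance, simultaneously diagonalize $\omega_{\varphi}$ and $\rho_{0}$ along the root-space decomposition $\mathfrak{m}=\bigoplus_{\beta}\mathfrak{g}_{-\beta}$, evaluate the diagonal entries as $\sum_{\alpha}(\cdot)\,\langle\varpi_{\alpha},h_{\beta}^{\vee}\rangle$, and finish with the min-ratio inequality. The factor-of-two bookkeeping you flag is also present in the paper itself (compare Eq. (\ref{riccinorm}) with the values of ${\mathcal{H}}_{\rho_{0}}(Y_{\beta}^{\ast},Y_{\beta}^{\ast})$ used in its proof) and resolves to the stated constant.
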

\begin{proof}
Since ${\rm{Ric}}(\omega_{\varphi}) = \rho_{0}$ is $G$-invariant, it suffices to check Eq. (\ref{lowericcibound}) at the point $x_{0} = eP \in X_{P}$. To this aim, let ${\mathcal{H}}_{\varphi}$ and ${\mathcal{H}}_{\rho_{0}}$ be the Hermitian structures induced on the holomorphic tangent bundle $T^{1,0}X_{P}$, respectively, by $\omega_{\varphi}$ and $\rho_{0}$, that is, 
\begin{center}
${\mathcal{H}}_{\varphi}(Y,Z) = -\sqrt{-1}\omega_{\varphi}(Y,\overline{Z})$ \ \ \ and \ \ \ ${\mathcal{H}}_{\rho_{0}}(Y,Z) = -\sqrt{-1}\rho_{0}(Y,\overline{Z})$,
\end{center}
for all $Y,Z \in T^{1,0}X_{P}$. A straightforward computation shows that 
\begin{center}
$\omega_{\varphi}(v,Jv) = {\mathcal{H}}_{\varphi}\big (\frac{1}{2}(v - \sqrt{-1}Jv),\frac{1}{2}(v - \sqrt{-1}Jv) \big )$ \ \ \ and \ \ \ $\rho_{0}(v,Jv) = {\mathcal{H}}_{\rho_{0}}\big (\frac{1}{2}(v - \sqrt{-1}Jv),\frac{1}{2}(v - \sqrt{-1}Jv) \big ),$
\end{center}
for all $\forall v \in TX_{P}$. From above, it follows that 
\begin{center}
${\rm{Ric}}(\omega_{\varphi})(v,Jv) = {\mathcal{H}}_{\rho_{0}}\big (\frac{1}{2}(v - \sqrt{-1}Jv),\frac{1}{2}(v - \sqrt{-1}Jv) \big ), \ \ \ \forall v \in TX_{P}$.
\end{center}
By considering the coordinate neighborhood $U^{-}(P) \subset X_{P}$ of $x_{0} \in X_{P}$ defined by the opposite big cell (see Eq. \ref{bigcell}), we obtain a suitable basis for $T_{x_{0}}^{1,0}X_{P}$, given by $Y^{\ast}_{\beta} = \frac{\partial}{\partial z}|_{z = 0}\exp(zy_{\beta})x_{0}$, $\beta \in \Pi^{-} \backslash \langle \Theta \rangle^{-}$. The vectors $Y_{\beta}^{\ast}$, $\beta \in \Pi^{-} \backslash \langle \Theta \rangle^{-}$, are orthogonal relative to any $(T^{\mathbbm{C}} \cap G)$-invariant Hermitian form. Moreover, we have 
\begin{equation}
{\mathcal{H}}_{\varphi}(Y_{\beta}^{\ast},Y_{\beta}^{\ast}) = \sum_{\alpha \in \Sigma \backslash \Theta } \frac{c_{\alpha}}{2} \langle \varpi_{\alpha},h_{\beta}^{\vee} \rangle \ \ \ {\text{and}} \ \ \ {\mathcal{H}}_{\rho_{0}}(Y_{\beta}^{\ast},Y_{\beta}^{\ast}) = \sum_{\alpha \in \Sigma \backslash \Theta } \frac{\langle \delta_{P},h_{\alpha}^{\vee} \rangle}{2} \langle \varpi_{\alpha},h_{\beta}^{\vee} \rangle,
\end{equation}
for every $\beta \in \Pi^{-} \backslash \langle \Theta \rangle^{-}$, see for instance \cite{AZAD}. Hence, from the expression above we obtain
\begin{equation}
{\mathcal{H}}_{\rho_{0}}(Y_{\beta}^{\ast},Y_{\beta}^{\ast}) = \sum_{\alpha \in \Sigma \backslash \Theta }  \frac{\langle \delta_{P},h_{\alpha}^{\vee} \rangle}{c_{\alpha}}\frac{c_{\alpha}}{2} \langle \varpi_{\alpha},h_{\beta}^{\vee} \rangle \geq \min_{\alpha \in \Sigma \backslash \Theta} \bigg \{ \frac{\langle \delta_{P}, h_{\alpha}^{\vee} \rangle }{c_{\alpha}}\bigg \} {\mathcal{H}}_{\varphi}(Y_{\beta}^{\ast},Y_{\beta}^{\ast}), 
\end{equation}
for all $\beta \in \Pi^{-} \backslash \langle \Theta \rangle^{-}$. Therefore, combining the above facts, we obtain
\begin{equation}
{\rm{Ric}}(\omega_{\varphi})(v,Jv)  \geq \min_{\alpha \in \Sigma \backslash \Theta} \bigg \{ \frac{\langle \delta_{P}, h_{\alpha}^{\vee} \rangle }{c_{\alpha}}\bigg \}\omega_{\varphi}(v,Jv), \ \ \ \ \forall v \in T_{x_{0}}X_{P}.
\end{equation}
By taking $v \in T_{x_{0}}X_{P}$, such that $\omega_{\varphi}(v,Jv) = 1$, we obtain the inequality (\ref{lowericcibound}) at $x_{0} = eP \in X_{P}$. From the $G$-invariance of $\omega_{\varphi}$ and $\rho_{0}$ we conclude the proof.
\end{proof}

\subsection{Schubert cycles, divisors and line bundles} The aim of this subsection is to recall some general well-known facts on Schubert cycles and their relationship with divisors and line bundles. The details about the facts which we cover in this subsection can be found in \cite{BernsteinGelfand}, \cite{FultonWoodward}, \cite{Brion}, \cite{Popov} see also \cite[\S 17 and \S 18]{Timashev}. 

Following the notation of the previous sections, for every $\alpha \in \Pi^{+}$, consider the root reflection $r_{\alpha} \colon \mathfrak{h}^{\ast} \to \mathfrak{h}^{\ast}$, defined by
\begin{equation}
r_{\alpha}(\phi) = \phi - \langle \phi,h_{\alpha}^{\vee} \rangle \alpha, \ \ \ \ \forall \phi \in \mathfrak{h}^{\ast}.
\end{equation}
From above the Weyl group associated to the root system $\Pi$ is defined by $\mathscr{W} = \langle r_{\alpha} \ | \ \alpha \in \Sigma \rangle$. Under the identification $\mathscr{W} \cong N_{G^{\mathbbm{C}}}(T^{\mathbbm{C}})/T^{\mathbbm{C}}$, by abuse of notation, for any $w \in \mathscr{W}$, we still denote by $w \in G^{\mathbbm{C}}$ one of its representative in $G^{\mathbbm{C}}$. Given a parabolic subgroup $P = P_{\Theta} \subset G^{\mathbbm{C}}$, we denote by $\mathscr{W}_{P}$ the subgroup of $\mathscr{W}$ generated by the reflections $r_{\alpha}$, $\alpha \in \Theta$, and by $\mathscr{W}^{P}$ the quotient $\mathscr{W}/\mathscr{W}_{P}$. Also, we identify $\mathscr{W}^{P}$ with the set of minimal length representatives in $\mathscr{W}$. By considering the $B$-orbit $Bwx_{0} \subset X_{P}$ (Bruhat cell), for every $w \in \mathscr{W}^{P}$, we have a cellular decomposition for $X_{P}$ given by
\begin{equation}
X_{P} = \coprod_{w \in \mathscr{W}^{P}}Bwx_{0}, \ \ \ ({\text{Bruhat decomposition}})
\end{equation}
In the above decomposition we have $Bwx_{0} \cong \mathbbm{C}^{\ell(w)}$, for every $w \in \mathscr{W}^{P}$, where $\ell(w)$ is the length\footnote{$\ell(w)$ denotes the length of a reduced (i.e. minimal) decomposition of $w$ as a product of simple reflections, e.g. \cite{Humphreys}.} of $w \in \mathscr{W}^{P}$. The Schubert varieties are defined by the closure of the above cells; we denote them by $X_{P}(w) = \overline{Bwx_{0}}$, $\forall w \in \mathscr{W}^{P}$. Notice that $\mathbbm{P}_{\alpha}^{1} = X_{P}(r_{\alpha})$, $\forall \alpha \in \Sigma \backslash \Theta$, and it is straightforward to show that the Mori cone ${\rm{NE}}(X_{P})$ is generated by the rational curves $[\mathbbm{P}_{\alpha}^{1}] \in \pi_{2}(X_{P})$, $\forall \alpha \in \Sigma \backslash \Theta$. Similarly, we let $Y_{P}(w) = \overline{B^{-}wx_{0}}$ be the opposite Schubert variety associated to $w \in \mathscr{W}^{P}$; it is a variety of codimension $\ell(w)$, and denoting by $w_{0} \in \mathscr{W}$ the element of maximal length, it follows that $Y_{P}(w) = w_{0}X_{P}(w_{0}w)$, for all $w \in \mathscr{W}^{P}$. For the sake of simplicity, we shall denote $w^{\vee} = w_{0}w$, for all $w \in \mathscr{W}^{P}$. The irreducible $B$-stable divisors of $X_{P}$ are the Schubert varieties of codimension 1 (Schubert divisors). We shall denote them by
\begin{equation}
D_{\alpha} = X_{P}(r_{\alpha}^{\vee}) = w_{0}Y_{P}(r_{\alpha}), \ \ \ \ \forall \alpha \in \Sigma \backslash \Theta.
\end{equation}
Under the map $\mathcal{O} \colon {\rm{Div}}(X_{P}) \to {\rm{Pic}}(X_{P})$, $D \mapsto \mathcal{O}(D)$, we have $\mathcal{O}(D_{\alpha}) = \mathscr{O}_{\alpha}(1)$, $\forall \alpha \in \Sigma \backslash \Theta$. Also, considering the divisor class group\footnote{The symbol ``$\sim$" stands for linear equivalence. Notice that, since $H^{2}(X_{P},\mathbbm{Z})$ is torsion-free, from Lefschetz theorem on $(1,1)$-classes we have that numerically equivalent divisors are in fact linearly equivalent, see for instance \cite{Lazarsfeld}.} ${\rm{Cl}}(X_{P}) = {\rm{Div}}(X_{P})/\sim$, it follows that 
\begin{equation}
{\rm{Cl}}(X_{P}) = \bigoplus_{\alpha \in \Sigma \backslash \Theta}\mathbbm{Z}[D_{\alpha}];
\end{equation}
\begin{remark}
By means of the above results, given $[D] \in {\rm{Cl}}(X_{P})$, we have $D \sim \sum_{\alpha \in \Sigma \backslash \Theta}(D \cdot \mathbbm{P}_{\alpha}^{1})D_{\alpha}$, where $(D \cdot \mathbbm{P}_{\alpha}^{1}) := [D] \cdot [\mathbbm{P}_{\alpha}^{1}], \forall \alpha \in  \Sigma \backslash \Theta$. Thus, we obtain a group isomorphism ${\text{Hom}}(P,\mathbbm{C}^{\times}) \cong {\rm{Cl}}(X_{P})$, such that
\begin{equation}
\label{characterdivisor}
\chi \mapsto  [D_{\chi}] =  \sum_{\alpha \in \Sigma \backslash \Theta}\langle \chi,h_{\alpha}^{\vee}\rangle[D_{\alpha}] , \ \ \ \ [D]  \mapsto \chi_{D} = \prod_{\alpha \in \Sigma \backslash \Theta} \chi_{\varpi_{\alpha}}^{(D \cdot \mathbbm{P}_{\alpha}^{1})},
\end{equation}
for all $\chi \in {\text{Hom}}(P,\mathbbm{C}^{\times})$ and for all $[D] \in {\rm{Cl}}(X_{P})$, where $\langle \chi,h_{\alpha}^{\vee}\rangle = \langle (d\chi)_{e},h_{\alpha}^{\vee} \rangle$, $\forall \alpha \in \Sigma \backslash \Theta$. Under the identification $ {\rm{Pic}}(X_{P}) \cong {\text{Hom}}(P,\mathbbm{C}^{\times}) \cong {\rm{Cl}}(X_{P})$, for the sake of simplicity, we shall denote the canonical line bundle and the canonical divisor of $X_{P}$ just by $K_{X_{P}}$. From above, we have 
\begin{equation}
\label{canonicalclass}
K_{X_{P}} = -\sum_{\alpha \in \Sigma \backslash \Theta}\langle \delta_{P},h_{\alpha}^{\vee}\rangle D_{\alpha}.
\end{equation}
\end{remark}

It will be important for us to consider the following invariant.
\begin{definition}
Let $X$ be a projective variety whose canonical bundle $K_{X}$ is not nef and let $L \in {\rm{Pic}}(X)$ be an ample line bundle. The nef value $\tau(X,L)$ of $L$ is defined as
\begin{equation}
\tau(X,L) = \inf \bigg \{ \frac{p}{q} \in \mathbbm{Q} \ \ \Big | \ \ K_{X}^{\otimes p} \otimes L^{\otimes q} \ \ {\text{is nef}} \bigg \}.
\end{equation}
\end{definition}
In the particular case that $X = X_{P}$, the next result provides a concrete description for the nef value of every ample line bundle $L \in {\rm{Pic}}(X_{P})$.
\begin{theorem}[\cite{Snow}]
Given an ample line bundle $L \in {\rm{Pic}}(X_{P})$, we have
\begin{equation}
\tau(X_{P},L)  = \max_{\alpha \in \Sigma \backslash \Theta} \frac{\langle \delta_{P}, h_{\alpha}^{\vee} \rangle}{\langle \chi_{L},h_{\alpha}^{\vee}\rangle},
\end{equation}
where $\chi_{L} \colon P \to \mathbbm{C}^{\times}$ is the character associated to $L$ by the isomorphism ${\text{Pic}}(X_{P}) \cong {\text{Hom}}(P,\mathbbm{C}^{\times})$.
\end{theorem}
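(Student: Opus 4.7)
The plan is to reduce the nef condition for $K_{X_P}^{\otimes p} \otimes L^{\otimes q}$ to a finite system of linear inequalities indexed by $\Sigma \setminus \Theta$, and then to read off $\tau(X_P, L)$ as the extremal ratio. The whole argument will rest on the fact, established in Subsection \ref{subsec3.1}, that ${\rm{NE}}(X_P)$ is generated by the Schubert lines $\mathbbm{P}_\alpha^1$ ($\alpha \in \Sigma \setminus \Theta$); consequently a $\mathbbm{Q}$-line bundle on $X_P$ will be nef if and only if it pairs non-negatively with each of these generators, by Kleiman's criterion.

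First I would combine the duality $(D_\alpha \cdot \mathbbm{P}_\beta^1) = \delta_{\alpha\beta}$ with the character-to-divisor isomorphism of Eq. (\ref{characterdivisor}) to obtain, for any $M = L_{\chi_M} \in {\text{Pic}}(X_P)$, the intersection number $M \cdot \mathbbm{P}_\alpha^1 = \langle \chi_M, h_\alpha^\vee \rangle$. Applied to $M = L$ this would give $L \cdot \mathbbm{P}_\alpha^1 = \langle \chi_L, h_\alpha^\vee \rangle > 0$ by ampleness; applied to the canonical class through Eq. (\ref{canonicalclass}) it would yield $K_{X_P} \cdot \mathbbm{P}_\alpha^1 = -\langle \delta_P, h_\alpha^\vee \rangle$ for each $\alpha \in \Sigma \setminus \Theta$.

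With these intersection numbers I would then compute, for positive $p, q \in \mathbbm{Z}$,
$$\bigl(K_{X_P}^{\otimes p} \otimes L^{\otimes q}\bigr) \cdot \mathbbm{P}_\alpha^1 \;=\; q\,\langle \chi_L, h_\alpha^\vee \rangle \;-\; p\,\langle \delta_P, h_\alpha^\vee \rangle,$$
so that $K_{X_P}^{\otimes p} \otimes L^{\otimes q}$ is nef if and only if $q/p \geq \langle \delta_P, h_\alpha^\vee \rangle / \langle \chi_L, h_\alpha^\vee \rangle$ for every $\alpha \in \Sigma \setminus \Theta$. Taking the extremal rational ratio subject to all of these finitely many inequalities, and using the density of $\mathbbm{Q}$ in $\mathbbm{R}$ to see that the optimum is attained, would immediately deliver the claimed formula $\tau(X_P,L) = \max_{\alpha \in \Sigma \setminus \Theta} \langle \delta_P, h_\alpha^\vee \rangle / \langle \chi_L, h_\alpha^\vee \rangle$.

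The hard part will not be analytic but rather bookkeeping: I will need to confirm that the definition of $\tau(X_P, L)$ is being applied in the Fano regime, so that $K_{X_P}$ is anti-ample and $\tau(X_P, L)$ is a finite positive rational number, and to match the sign and reciprocation conventions in the definition against the extremal ratio produced by the inequalities above. Geometrically the content is transparent: the nef cone of $X_P$ is the simplicial cone dual to the Mori-cone generators $\mathbbm{P}_\alpha^1$, and finding the nef value amounts to identifying which of its finitely many supporting facets the line segment joining $K_{X_P}$ to an interior ray through $L$ first touches, which is exactly what the maximum in the statement records.
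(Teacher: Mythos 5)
Your argument is correct, and since the paper simply quotes this result from \cite{Snow} without proof, there is nothing internal to compare it against; what you describe is essentially Snow's own argument specialized to the data already set up in Subsection 3.1 (the generators $[\mathbbm{P}_{\alpha}^{1}]$ of ${\rm{NE}}(X_{P})$, the duality $(D_{\alpha}\cdot\mathbbm{P}_{\beta}^{1})=\delta_{\alpha\beta}$, and $K_{X_{P}}=-\sum_{\alpha}\langle\delta_{P},h_{\alpha}^{\vee}\rangle D_{\alpha}$), reducing nefness of $K_{X_{P}}+tL$ via Kleiman's criterion to the finitely many inequalities $t\,\langle\chi_{L},h_{\alpha}^{\vee}\rangle\geq\langle\delta_{P},h_{\alpha}^{\vee}\rangle$, whose smallest solution is the stated maximum (a positive rational, hence attained). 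The one caveat you rightly flag is real: the paper's displayed definition of $\tau(X,L)$ as $\inf\{p/q\ :\ K_{X}^{\otimes p}\otimes L^{\otimes q}\ \text{nef}\}$ is, read literally, the reciprocal of (and not well posed compared with) the standard convention $\tau=\min\{t\ :\ K_{X}+tL\ \text{nef}\}$ used by Snow, and it is the latter that the displayed formula actually computes.
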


\begin{remark}
For every ample divisor $D \in {\rm{Div}}(X_{P})$, we shall denote $\tau(D):=\tau(X_{P},\mathcal{O}(D))$.
\end{remark}

\subsection{Newton–Okounkov bodies and string polytopes} In this subsection, we review some basic facts and generalities about Newton–Okounkov bodies and string polytopes associated to flag varieties.

Given an ample divisor $D \in {\rm{Pic}}(X_{P})$, let
\begin{equation}
R(X_{P},D) := \bigoplus_{n \geq 0}H^{0}(X_{P},\mathcal{O}(nD)),
\end{equation}
denote the associated ring of global sections. By fixing some total order $\leq$ on $\mathbbm{Z}^{n}$, where $n = \dim_{\mathbbm{C}}(X_{P})$, we have the following definition (e.g. \cite{KavehKhovanski}).
\begin{definition}
A map ${\rm{v}} \colon R(X_{P},D) \backslash \{0\} \to \mathbbm{Z}^{n}$ is called a valuation if for all $c \in \mathbbm{C}^{\times}$, $f,g \in R(X_{P},D) \backslash \{0\}$ the following holds:
\begin{enumerate}
\item[(i)] ${\rm{v}}(cf) = {\rm{v}}(f)$;
\item[(ii)] ${\rm{v}}(fg) = {\rm{v}}(f) + {\rm{v}}(g)$;
\item[(iii)] ${\rm{v}}(f+g) \geq \min\{{\rm{v}}(f),{\rm{v}}(g)\}$ (if $f + g \neq 0$).
\end{enumerate}
We say that ${\rm{v}}$ has full rank if $\dim_{\mathbbm{R}}(\langle Im({\rm{v}}) \rangle_{\mathbbm{R}}) = n$.
\end{definition}

\begin{definition}
Given a valuation ${\rm{v}}$ we define the valuation semigroup with respect to $(X_{P},D)$  as being the graded semigroup $\Gamma_{{\rm{v}}}(D) \subset \mathbbm{N} \times \mathbbm{Z}^{n}$ given by
\begin{equation}
\Gamma_{{\rm{v}}}(D):= \Big \{ (m,{\rm{v}}(f)) \ \Big | \ 0 \neq f \in H^{0}(X_{P},\mathcal{O}(mD)), \ m > 0\Big \} \subset \mathbbm{N} \times \mathbbm{Z}^{n}.
\end{equation}
\end{definition}

In the above setting, we denote by
\begin{equation}
\mathcal{C}(\Gamma_{{\rm{v}}}(D)) := \overline{{\text{cone}}(\Gamma_{{\rm{v}}}(D))} \subset \mathbbm{R}^{n+1},
\end{equation}
the closed convex cone (with vertex at the origin) spanned by $\Gamma_{{\rm{v}}}(D)$, i.e., the intersection of all the closed convex cones containing $\Gamma_{{\rm{v}}}(D)$. From this we have the following definition.
\begin{definition}[Newton–Okounkov body, \cite{Okounkov1}, \cite{Okounkov1}, \cite{KavehKhovanski}] The Newton–Okounkov body $\Delta_{{\rm{v}}}(D)$ associated to a valuation semigroup $\Gamma_{{\rm{v}}}(D)$ is defined by the slice of the cone $\mathcal{C}(\Gamma_{{\rm{v}}}(D))$ at $m = 1$ projected to $\mathbbm{R}^{n}$, via the projection on the second factor $(m,a) \to a$. In other words,
\begin{equation}
\Delta_{{\rm{v}}}(D) = {\text{closed convex hull of}} \bigcup_{m \geq 1} \frac{1}{m}\Big \{ {\rm{v}}(f) \ \Big | \ 0 \neq f \in H^{0}(X_{P},\mathcal{O}(mD))\Big \} \subset \mathbbm{R}^{n}.
\end{equation}
\end{definition}
\begin{remark}
In general, the convex body $\Delta_{{\rm{v}}}(D)$ is not necessarily a polytope, and, as we have seen, its construction depends on the choice of ${\rm{v}}$. As we shall see bellow, under a suitable choice of ${\rm{v}}$, we can attach to every ample divisor $D \in {\rm{Div}}(X_{P})$ a Newton–Okounkov body which is in fact a rational convex polytope (i.e., with rational vertices) satisfying some interesting properties. 
\end{remark}
In \cite{Littelmann} and \cite{BerensteinZelevinsky}, the authors construct a remarkable parameterization, called the string parameterization, for the elements of a crystal basis by the integral points in certain polytopes. These polytopes are known as string polytopes and their construction depends on the choice of a reduced decomposition\footnote{For every $w \in \mathscr{W}$, $\underline{w} = (r_{\alpha_{1}}, \ldots, r_{\alpha_{k}})$ stands for a reduced decomposition $w = r_{\alpha_{1}} \cdots r_{\alpha_{k}}$  ($\ell(w) = k$).} $\underline{w_{0}}$ for the longest element $w_{0} \in \mathscr{W}$. More precisely, fixed a reduced decomposition $\underline{w_{0}}$, there is a rational polyhedral cone $\mathcal{C}_{\underline{w}_{0}}$ in $\Lambda_{\mathbbm{R}}^{+} \times \mathbbm{R}^{N}$, where $\Lambda_{\mathbbm{R}}^{+}$ is the positive Weyl chamber and $N = \ell(w_{0}) = \#(\Pi^{+})$. From this, the string polytope $\Delta_{\underline{w_{0}}}(\lambda)$ of $\lambda \in \Lambda^{+}$ is defined by
\begin{equation}
\Delta_{\underline{w_{0}}}(\lambda) = \Big \{ a \in \mathbbm{R}^{N} \ \Big | \ (\lambda,a) \in  \mathcal{C}_{\underline{w}_{0}} \Big \} \subset \mathbbm{R}^{N}.
\end{equation}
In other words, the string polytope $\Delta_{\underline{w_{0}}}(\lambda)$ is the slice of $\mathcal{C}_{\underline{w}_{0}}$ at $\lambda$. In this setting, given $\lambda \in \Lambda^{+}$, we have the following:
\begin{enumerate}
\item[1)] $\Delta_{\underline{w_{0}}}(\lambda)$ is a rational convex polytope;
\item[2)] $\dim_{\mathbbm{C}}(V(\lambda)) = \#(\Delta_{\underline{w_{0}}}(\lambda) \cap \mathbbm{Z}^{N})$;
\item[3)] For every $k > 1$, we have $\Delta_{\underline{w_{0}}}(k\lambda) = k\Delta_{\underline{w_{0}}}(\lambda)$.
\end{enumerate}
More generally, given any $w \in \mathscr{W}$, by fixing a reduced decomposition $\underline{w}$, we can find $w'\in \mathscr{W}$, satisfying $w_{0} = ww'$, and such that $\underline{w_{0}} = (\underline{w},\underline{w'})$ defines a reduced decomposition, see for instance \cite[p. 16]{Humphreysreflection}. From this, for any $\lambda \in \Lambda^{+}$ we can define the string polytope associated to the pair $(w,\lambda)$ by 
\begin{equation}
\label{stringDemazure}
\Delta_{\underline{w}}(\lambda):= \Delta_{\underline{w_{0}}}(\lambda) \cap (\mathbbm{R}^{\ell(w)} \times \{0\}).
\end{equation}
Observing that $w(\lambda)$ defines a weight (a.k.a. extremal weight \cite{Flagvarieties}) for the $\mathfrak{g}^{\mathbbm{C}}$-module $V(\lambda)$, we have the following definition.
\begin{definition}[\cite{Demazure}, \cite{Kumar}, \cite{BrionKumar}]
Let $w \in \mathscr{W}$ and $\lambda \in \Lambda^{+}$. The Demazure module associated to the pair $(w,\lambda)$ is the $\mathfrak{b}$-module $V_{w}(\lambda) \subseteq V(\lambda)$ defined by 
\begin{equation}
V_{w}(\lambda) := \mathfrak{U}(\mathfrak{b}) \cdot V(\lambda)_{w(\lambda)},
\end{equation}
where $\mathfrak{U}(\mathfrak{b})$ is the enveloping algebra of the Borel subalgebra $\mathfrak{b} \subset \mathfrak{g}^{\mathbbm{C}}$ and $V(\lambda)_{w(\lambda)}$ is the weight space of $V(\lambda)$ with weight $w(\lambda)$. In particular, we have $V_{w_{0}}(\lambda) = V(\lambda)$.
\end{definition}
From above, the rational convex polytope $\Delta_{\underline{w}}(\lambda)$ (Eq. (\ref{stringDemazure})) has the property that the number of integral points in it is equal to the dimension of the Demazure module $V_{w}(\lambda)$, i.e. $\dim_{\mathbbm{C}}(V_{w}(\lambda)) = \#(\Delta_{\underline{w}}(\lambda) \cap \mathbbm{Z}^{\ell(w)})$, see \cite{Littelmann}. Therefore, given any flag variety $X_{P}$, one can associate to every ample divisor $D \in {\rm{Div}}(X_{P})$ a string polytope in the following way. Let $w^{P} \in \mathscr{W}$ be the unique minimal length representative of the class $w_{0}\mathscr{W}_{P} \in \mathscr{W}^{P}$. We have that 
\begin{equation}
\ell(w^{P}) = \dim_{\mathbbm{C}}(X_{P}) = n,
\end{equation}
see for instance \cite{MONOMIAL}. Moreover, there exists a unique $w' \in \mathscr{W}_{P}$, satisfying $w_{0} = w^{P}w'$ (e.g. \cite[\S 1.10]{Humphreysreflection}). From this, by taking the reduced decomposition $\underline{w_{0}} = (\underline{w^{P}},\underline{w'})$, for every ample divisor $D \in {\rm{Div}}(X_{P})$, considering the induced character $\chi_{D} \in {\text{Hom}}(P,\mathbbm{C}^{\times})$, we define its associated string polytope  by
\begin{equation}
\Delta_{\underline{w^{P}}}(D) := \Delta_{\underline{w^{P}}}((d\chi_{D})_{e}).
\end{equation}
We observe that, since $D \in {\rm{Div}}(X_{P})$ is assumed to be ample, from the definition of $\chi_{D}$ (see Eq. (\ref{characterdivisor})), for every $\alpha \in \Sigma$, we have
\begin{center}
$r_{\alpha}((d\chi_{D})_{e}) = (d\chi_{D})_{e} \iff \langle (d\chi_{D})_{e},h_{\alpha}^{\vee} \rangle = 0 \iff r_{\alpha} \in \mathscr{W}_{P}$.
\end{center}
Thus, we have $w_{0}((d\chi_{D})_{e}) = w^{P}w'((d\chi_{D})_{e}) = w^{P}((d\chi_{D})_{e})$. Hence, $V(\chi_{D}) = V_{w^{P}}(\chi_{D})$, so we obtain 
\begin{equation}
\label{dimensionlattice}
\dim_{\mathbbm{C}}(H^{0}(X_{P},\mathcal{O}(D))) = \dim_{\mathbbm{C}}(V(\chi_{D})) =  \#(\Delta_{\underline{w^{P}}}(D) \cap \mathbbm{Z}^{n}).
\end{equation}
The relation between Newton–Okounkov bodies and string polytopes associated to ample divisors $D \in {\rm{Div}}(X_{P})$ is provided by the following theorem:

\begin{theorem}[\cite{Kaveh}]
\label{Okounkovstring}
For every ample divisor $D \in {\rm{Div}}(X_{P})$, there exists a valuation ${\rm{v}}_{\underline{w^{P}}}$, such that the string polytope $\Delta_{\underline{w^{P}}}(D)$ can be identified with the Newton–Okounkov body $\Delta_{{\rm{v}}_{\underline{w^{P}}}}(D)$.
\end{theorem}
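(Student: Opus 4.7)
The plan is to construct the valuation ${\rm{v}}_{\underline{w^P}}$ via coordinates on the opposite big cell adapted to the reduced decomposition $\underline{w^P}$, and then match its Newton–Okounkov body against the string polytope by passing through the theory of crystal/canonical bases. First, I would use the reduced decomposition $\underline{w^P} = (r_{\alpha_{i_1}},\ldots,r_{\alpha_{i_n}})$ to build a birational parameterization
\begin{equation}
\psi \colon \mathbb{A}^n \longrightarrow X_P, \qquad (t_1,\ldots,t_n) \longmapsto \exp(t_1 y_{-\alpha_{i_1}}) \cdots \exp(t_n y_{-\alpha_{i_n}})\,x_0,
\end{equation}
whose image lies in the opposite big cell $U^-(P)$. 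Because $\ell(w^P) = n = \dim_\mathbb{C}(X_P)$, this map is birational, so $\psi^*$ identifies $\mathbb{C}(X_P)$ with $\mathbb{C}(t_1,\ldots,t_n)$.

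On $\mathbb{C}(t_1,\ldots,t_n)$ I would take the ``lowest term'' valuation with respect to a chosen lexicographic order on monomials in $t_1,\ldots,t_n$; pulling back via $\psi^*$ yields a full-rank $\mathbb{Z}^n$-valued valuation ${\rm{v}}_{\underline{w^P}}$ on $\mathbb{C}(X_P)^\times$. Extending this to nonzero elements of $R(X_P,D)$ is carried out by fixing a trivialization of $\mathcal{O}(D)$ over $\psi(\mathbb{A}^n)$ (available since the restriction of any line bundle to the contractible big cell is trivial), so that each $f \in H^0(X_P,\mathcal{O}(mD))$ corresponds to a rational function on $\mathbb{A}^n$ whose lowest-order exponent defines ${\rm{v}}_{\underline{w^P}}(f) \in \mathbb{Z}^n$. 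The verification of the valuation axioms (i)--(iii) is routine from the properties of monomial order valuations.

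The heart of the proof is to show that
\begin{equation}
\bigl\{ {\rm{v}}_{\underline{w^P}}(f) \ \big| \ 0 \neq f \in H^0(X_P,\mathcal{O}(mD)) \bigr\} \;=\; \Delta_{\underline{w^P}}(mD) \cap \mathbb{Z}^n,
\end{equation}
from which $\Delta_{{\rm{v}}_{\underline{w^P}}}(D) = \Delta_{\underline{w^P}}(D)$ after taking convex hulls and dividing by $m$. The natural tool is Kashiwara's (or Lusztig's) global crystal basis $\mathbf{B}(m\chi_D)$ of $V(m\chi_D) \cong H^0(X_P,\mathcal{O}(mD))^*$, which admits a string parameterization by the integer points of $\Delta_{\underline{w^P}}(mD)$ via the sequence of simple root operators encoded by $\underline{w^P}$. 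Dualizing, one obtains a basis $\{b_a : a \in \Delta_{\underline{w^P}}(mD) \cap \mathbb{Z}^n\}$ of $H^0(X_P,\mathcal{O}(mD))$. The claim to establish is that, after a constant shift (coming from the reference trivialization), the valuation satisfies ${\rm{v}}_{\underline{w^P}}(b_a) = a$ for each $a$, so that the valuation semigroup is exactly the set of integral points of the cone over $\Delta_{\underline{w^P}}(D)$.

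The main obstacle is precisely this last step: controlling the leading monomial of a dual canonical basis element $b_a$ in the big-cell coordinates $(t_1,\ldots,t_n)$. This is where the compatibility between three distinct structures must be reconciled: (i) the parameterization of $U^-(P)$ by $\underline{w^P}$, (ii) the Demazure-module filtration $V_w(m\chi_D)$ of $V(m\chi_D)$, and (iii) Kashiwara's string parameterization of $\mathbf{B}(m\chi_D)$. The strategy is to argue inductively on $\ell(w)$ along the reduced decomposition $\underline{w^P}$, using the fact that applying $\exp(t_k y_{-\alpha_{i_k}})$ to a highest-weight-type vector produces a polynomial in $t_k$ whose leading exponent matches the $k$-th string coordinate of the Kashiwara operator $\tilde{f}_{\alpha_{i_k}}$. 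Once this induction is set up and the leading exponent/string-coordinate correspondence is established, equality of the two convex bodies follows by comparing lattice point counts at every level $m \geq 1$ (both equal $\dim_\mathbb{C} H^0(X_P,\mathcal{O}(mD))$ by Borel--Weil combined with \eqref{dimensionlattice}) and invoking standard results equating a convex body with its integer points' convex hull in this graded setting.
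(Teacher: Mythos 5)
The paper offers no proof of this statement: it is imported verbatim from Kaveh's work \cite{Kaveh}, so there is no internal argument to compare against. Judged on its own, your proposal correctly reconstructs the architecture of Kaveh's proof: a lowest-term valuation attached to big-cell coordinates coming from a reduced word for $w^{P}$ (using $\ell(w^{P})=\dim_{\mathbbm{C}}X_{P}$ for birationality), the identification $H^{0}(X_{P},\mathcal{O}(mD))\cong V(m\chi_{D})^{\ast}$ via Borel--Weil, and a comparison of the valuation images of a distinguished basis with the string parameterization of the crystal $\mathbf{B}(m\chi_{D})$.

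The gap is that the step you yourself flag as ``the main obstacle'' is not a deferrable technicality --- it is the entire content of the theorem, and your sketch of it is not a proof. The assertion that ${\rm{v}}_{\underline{w^{P}}}(b_{a})=a$ (up to the shift from the trivialization) cannot be obtained by the naive induction you describe: the Kashiwara operators $\tilde{f}_{\alpha}$ and the string coordinates are defined on the crystal at $q=0$ of the quantized enveloping algebra, not on the classical module, so there is no a priori relation between the $k$-th string coordinate of $a$ and the degree in $t_{k}$ of $\psi^{\ast}b_{a}$. Establishing this compatibility is exactly what forces Kaveh to work with global (dual canonical) bases, their triangularity with respect to the filtration by Demazure modules $V_{w}(m\chi_{D})$, and the precise interaction between the chosen lexicographic order on $\mathbbm{Z}^{n}$ and the order of the root subgroups in $\psi$ (permuting the $t_{i}$, or replacing lowest by highest terms, generally yields a different convex body, e.g.\ a Lusztig-type polytope rather than a string polytope). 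Your closing lattice-point-counting argument is likewise only conclusive after one knows the one-sided containment of the value set in $\Delta_{\underline{w^{P}}}(mD)\cap\mathbbm{Z}^{n}$, which again rests on the unproven compatibility. In short, the proposal is a faithful road map of the cited result, but the decisive computation is missing.
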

\begin{remark}
If $D \sim D'$, then $\Delta_{\underline{w^{P}}}(D) = \Delta_{\underline{w^{P}}}(D')$, i.e., the polytope $\Delta_{\underline{w^{P}}}(D)$ is a numerical invariant. 
\end{remark}

\section{Proof of main results}

In this section, we prove all the results stated in the introduction. For the sake of easy reading, we shall restate each result.

\begin{theorem}[Theorem \ref{Theo1}]
\label{ProofTheo1}
Let $\omega_{0}$ be a $G$-invariant K\"{a}hler metric on a rational homogeneous variety $X_{P}$. Then the unique smooth solution $\omega(t)$ of the K\"{a}hler-Ricci flow on $X_{P}$ starting at $\omega_{0}$ satisfies the following:
\begin{enumerate}
\item[1)] $\omega(t)$ can be described locally in the explicit form
\begin{equation}
\omega(t) = \sum_{\alpha \in \Sigma \backslash \Theta}\bigg [ \int_{\mathbbm{P}_{\alpha}^{1}}\frac{\omega_{0}}{2\pi}- t\langle \delta_{P}, h_{\alpha}^{\vee} \rangle \bigg ]\sqrt{-1} \partial \overline{\partial}\log \big (||s_{U}v_{\varpi_{\alpha}}^{+}||^{2}\big ), \ \ \forall t \in [0,T),
\end{equation}
\end{enumerate}
for some local section $s_{U} \colon U \subset X_{P} \to G^{\mathbbm{C}}$, where $\mathbbm{P}_{\alpha}^{1} \subset X_{P}$, $\alpha \in \Sigma \backslash \Theta$, are generators of ${\rm{NE}}(X_{P})$;
\begin{enumerate}
\item[2)] The maximal existence time $T = T(\omega_{0})$ of $\omega(t)$ is given explicitly by
\begin{equation}
\label{singulartime}
T(\omega_{0}) = \min_{\alpha \in \Sigma \backslash \Theta}  \int_{\mathbbm{P}_{\alpha}^{1}}\frac{\omega_{0}}{2\pi \langle \delta_{P}, h_{\alpha}^{\vee} \rangle};
\end{equation}

\item[3)] The scalar curvature $R(t)$ of $\omega(t)$ has the following explicit form
\begin{equation}
\label{scalarflow}
R(t) = -\sum_{\beta \in \Pi^{+} \backslash \langle \Theta \rangle^{+}}\frac{d}{dt}\log \bigg \{ \sum_{\alpha \in \Sigma \backslash \Theta} \bigg [ \int_{\mathbbm{P}_{\alpha}^{1}}\frac{\omega_{0}}{2\pi}- t\langle \delta_{P}, h_{\alpha}^{\vee} \rangle \bigg ] \langle \varpi_{\alpha}, h_{\beta}^{\vee} \rangle\bigg \}, \ \ \forall t \in [0,T);
\end{equation}

\item[4)] For all $0 \leq t < T$ we have
\begin{equation}
\frac{1}{\sqrt{n}(T-t)}\leq \frac{1}{\sqrt{n}} R(t) \leq |{\rm{Ric}}| \leq R(t) \leq \frac{n}{T - t}, \ \ and \ \ |{\rm{Rm}}| \leq \frac{C(n)}{T - t},
\end{equation}
\end{enumerate}
where $C(n)$ is a uniform constant which depends only on $n = \dim_{\mathbbm{C}}(X_{P})$;
\begin{enumerate}
\item[5)] For all $0 \leq t < T$ we have
\begin{equation}
\bigg [1-\frac{t}{T} \bigg]^{n}{\rm{Vol}}(X_{P},\omega_{0}) \leq {\rm{Vol}}(X_{P},\omega(t)) \leq \bigg [1-\frac{t}{T} \bigg] {\rm{Vol}}(X_{P},\omega_{0});
\end{equation}
\item[6)] For all $0 \leq t < T$ we have ${\rm{Ric}}(\omega(t)) \geq \frac{1}{C(\omega_{0})}$, such that 
\begin{equation}
C(\omega_{0}) =  \max_{\alpha \in \Sigma \backslash \Theta}  \int_{\mathbbm{P}_{\alpha}^{1}}\frac{\omega_{0}}{\pi \langle \delta_{P}, h_{\alpha}^{\vee} \rangle}.
\end{equation}
\end{enumerate}
In particular, for all $0 \leq t < T$, it follows that
\begin{equation}
\label{diameigenvalue}
{\rm{diam}}(X_{P},\omega(t)) \leq \pi \sqrt{(2n-1)C(\omega_{0})} \ \ \ \ \ \  {\text{and}} \ \ \ \ \ \ \ \frac{2}{C(\omega_{0})} \leq \lambda_{1}(t) \leq  2R(t) \Bigg [ \prod_{\alpha \succ 0} \frac{\langle \varrho^{+} + \delta_{P},h_{\alpha} \rangle}{\langle \delta_{P}, h_{\alpha} \rangle}\Bigg ],
\end{equation}
where $\lambda_{1}(t) = \lambda_{1}(X_{P},\omega(t))$ is the first non-zero eigenvalue of the Laplacian $\Delta_{\omega(t)} = {\rm{div} \circ {\rm{grad}}}$, $\forall t \in [0,T)$.

\end{theorem}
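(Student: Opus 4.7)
The plan is to reduce everything to the explicit linear family $\omega(t) = \omega_{0} - t\rho_{0}$, read off all invariants from the coefficients in the basis $\{[\Omega_{\alpha}]\}_{\alpha \in \Sigma \backslash \Theta}$, and then combine this with three external ingredients: the Matsushima-type rigidity of $G$-invariant representatives (\'a la Theorem \ref{AZADBISWAS} and Eq.~\eqref{riccinorm}), the evolution identity of Lemma \ref{scalaralongflow}, and the classical comparison theorems (Myers, Lichnerowicz, Bourguignon--Li--Yau).

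For item (1), I first note that the $G$-invariance of $\omega_{0}$ is preserved by the flow (Hamilton/Kotschwar), and since every $G$-invariant K\"ahler metric on $X_{P}$ has Ricci form $\rho_{0}$ by uniqueness of $G$-invariant representatives of $c_{1}(X_{P})$, the flow becomes the linear ODE $\partial_{t}\omega = -\rho_{0}$, giving $\omega(t) = \omega_{0} - t\rho_{0}$. Writing $\omega_{0}$ in the Azad--Biswas form of Theorem \ref{AZADBISWAS} and using $\int_{\mathbbm{P}_{\beta}^{1}}\sqrt{-1}\partial\overline{\partial}\log(\|sv_{\varpi_{\alpha}}^{+}\|^{2}) = 2\pi\delta_{\alpha\beta}$ (from $c_{1}(\mathscr{O}_{\alpha}(1)) = [\Omega_{\alpha}]$ and Proposition \ref{C8S8.2Sub8.2.3P8.2.6}) pins down the coefficients as $\int_{\mathbbm{P}_{\alpha}^{1}}\omega_{0}/(2\pi)$; substituting Eq.~\eqref{riccinorm} for $\rho_{0}$ yields (1). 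For item (2), I invoke Theorem \ref{ExistenceKRF}: since the K\"ahler cone decomposes as $\mathcal{K}_{X_{P}} = \bigoplus_{\alpha}\mathbbm{R}^{+}[\Omega_{\alpha}]$, the class $[\omega_{0}] - 2\pi t c_{1}(X_{P})$ remains K\"ahler exactly while every coefficient $\int_{\mathbbm{P}_{\alpha}^{1}}\omega_{0}/(2\pi) - t\langle\delta_{P},h_{\alpha}^{\vee}\rangle$ is strictly positive, yielding the stated $T(\omega_{0})$.

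For item (3), $R(t) = {\rm tr}_{\omega(t)}\rho_{0}$ is spatially constant (both $\omega(t)$ and $\rho_{0}$ are $G$-invariant), so Lemma \ref{volumeflow} gives $R(t) = -\tfrac{d}{dt}\log{\rm Vol}(X_{P},\omega(t))$; plugging (1) into Theorem \ref{volumeflag} and differentiating log produces the explicit sum (\ref{scalarflow}). For item (4), spatial constancy of $R$ reduces Eq.~\eqref{derivativescalar} to the ODE $\dot R \geq R^{2}/n$, and integrating backward from the singularity time $T$ yields $R(t) \leq n/(T-t)$. The lower bound $R(t)(T-t)\geq 1$ is obtained directly from (\ref{scalarflow}): if $\alpha_{0}$ realises the minimum in (\ref{singulartime}), the $\beta = \alpha_{0}$ summand contributes $\langle\delta_{P},h_{\alpha_{0}}^{\vee}\rangle/c_{\alpha_{0}}(t) = 1/(T-t)$, and all other summands are nonnegative because $\langle\varpi_{\alpha},h_{\beta}^{\vee}\rangle \geq 0$. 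The sandwich $R/\sqrt{n}\leq |{\rm Ric}|\leq R$ is Cauchy--Schwarz on the eigenvalues of ${\rm Ric}(\omega(t))$ with respect to $\omega(t)$, which are nonnegative since they coincide with those of the positive form $\rho_{0}$. The Riemann bound is the only step that requires external machinery: combining the just-obtained $|{\rm Ric}|\leq R \leq n/(T-t)$ with \cite[Theorem 4]{Optimal}, which in the homogeneous setting upgrades control on the Ricci norm to control on $|{\rm Rm}|$, yields $|{\rm Rm}|\leq C(n)/(T-t)$.

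For item (5), I write ${\rm Vol}(X_{P},\omega(t))/{\rm Vol}(X_{P},\omega_{0}) = \prod_{\beta\in\Pi^{+}\backslash\langle\Theta\rangle^{+}} f_{\beta}(t)/f_{\beta}(0)$ using Theorem \ref{volumeflag}, where each $f_{\beta}(t) = \sum_{\alpha}c_{\alpha}(t)\langle\varpi_{\alpha},h_{\beta}^{\vee}\rangle$ is affine in $t$. Convexity of the affine interpolant with $f_{\beta}(T)\geq 0$ gives $f_{\beta}(t)/f_{\beta}(0)\geq 1 - t/T$; taking the product over the $n$ factors yields the lower bound, while the factor corresponding to the $\alpha_{0}$ realising the minimum in (\ref{singulartime}) satisfies $f_{\alpha_{0}}(t)/f_{\alpha_{0}}(0) = 1 - t/T$ and the remaining factors are bounded above by $1$, yielding the upper bound. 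For item (6), Proposition \ref{lowerricc} applied to $\omega(t)$ gives ${\rm Ric}(\omega(t))\geq \min_{\alpha}\langle\delta_{P},h_{\alpha}^{\vee}\rangle/c_{\alpha}^{AB}(t)$ where $c_{\alpha}^{AB}(t) = \int_{\mathbbm{P}_{\alpha}^{1}}\omega_{0}/\pi - 2t\langle\delta_{P},h_{\alpha}^{\vee}\rangle$; since each $c_{\alpha}^{AB}(t)$ decreases in $t$, this min increases in $t$, so the worst-case (uniform) bound is attained at $t = 0$, giving exactly $1/C(\omega_{0})$. Finally, Myers' theorem applied on the $2n$-real-dimensional K\"ahler manifold with the K\"ahler-to-Riemannian Ricci identification yields the diameter bound; the K\"ahler Lichnerowicz--Obata inequality $\lambda_{1}\geq 2k$ for ${\rm Ric}\geq k\omega$ yields the lower eigenvalue bound; and the upper eigenvalue bound follows from the Bourguignon--Li--Yau estimate applied to the (anti-)canonical Kodaira embedding $X_{P}\hookrightarrow \mathbbm{P}(H^{0}(X_{P},K_{X_{P}}^{-1})^{\ast})$ provided by Borel--Weil, combined with Eq.~(\ref{degreeformula}) and the Weyl dimension formula to identify the combinatorial factor $\prod_{\alpha\succ 0}\langle\varrho^{+}+\delta_{P},h_{\alpha}\rangle/\langle\delta_{P},h_{\alpha}\rangle$. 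The main obstacles I foresee are the $|{\rm Rm}|$ bound, which genuinely depends on the external homogeneous result of \cite{Optimal} and its interplay with the scalar curvature estimate, and the upper bound on $\lambda_{1}(t)$, which requires carefully matching the BLY test-function estimate against the Borel--Weil embedding data in the $G$-invariant setting.
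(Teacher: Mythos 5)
Your proposal is correct and follows essentially the same route as the paper: reduce the flow to the linear family $\omega(t)=\omega_{0}-t\rho_{0}$ via invariance and the constancy of the Ricci form, read off $T$, $R(t)$ and the volume from the coefficients in the basis $\{[\Omega_{\alpha}]\}$, and conclude items (4)--(6) with \cite[Theorem 4]{Optimal}, Proposition \ref{lowerricc}, Myers, Lichnerowicz, and the Bourguignon--Li--Yau/Borel--Weil/Weyl combination. The only deviations are three equivalent elementary substitutions, all valid: you obtain $R(t)\leq n/(T-t)$ by integrating $\dot R\geq R^{2}/n$ backward toward $T$ (using $R>0$) instead of the paper's pointwise estimate $P_{\beta}(t)\geq a_{\beta}(T-t)$ on each linear factor; you get the sandwich $R/\sqrt{n}\leq|{\rm{Ric}}|\leq R$ by Cauchy--Schwarz on the nonnegative eigenvalues of ${\rm{Ric}}(\omega(t))$ rather than from the identity $\partial_{t}R=|{\rm{Ric}}|^{2}$ of Lemma \ref{scalaralongflow}; and you derive item (5) directly from the ratio of affine factors in Theorem \ref{volumeflag} rather than by integrating the scalar-curvature bounds in ${\rm{Vol}}(t)={\rm{Vol}}(0)\mathrm{e}^{-\int_{0}^{t}R}$.
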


\begin{proof}
The item (1) follows from the following facts. Given a $G$-invariant K\"{a}hler metric $\omega_{0}$, from Theorem \ref{AZADBISWAS} we have that $\omega_{0} = \omega_{\varphi}$, for some $\varphi \colon G^{\mathbbm{C}} \to \mathbbm{R}$, such that
\begin{center}
$\varphi(g) = \displaystyle \sum_{\alpha \in \Sigma \backslash \Theta}c_{\alpha}\log \big (||gv_{\varpi_{\alpha}}^{+}|| \big )$, \ \ \ \ $(\forall g \in G^\mathbbm{C})$
\end{center}
with $c_{\alpha} > 0$ for all $\alpha \in \Sigma \backslash \Theta$. Moreover, from Proposition \ref{C8S8.2Sub8.2.3P8.2.6} it follows that 
\begin{equation}
c_{\alpha} =  \int_{\mathbbm{P}_{\alpha}^{1}}\frac{\omega_{0}}{\pi},
\end{equation}
for all $\alpha \in \Sigma \backslash \Theta$. Since ${\rm{Ric}}(\omega) = \rho_{0}$ for every $G$-invariant K\"{a}hler metric $\omega$, it follows from Theorem \ref{ExistenceCEq} that the unique smooth solution $\omega(t)$ defined on the maximal interval $[0,T)$ for the K\"{a}hler-Ricci flow starting at a homogeneous K\"{a}hler metric $\omega_{0}$ is given by $\omega(t) = \omega_{0} - t\rho_{0}$. Thus, from the description for $\rho$ provided by Eq. (\ref{riccinorm}), we have 
\begin{equation}
\label{locdescriptionsolution}
\omega(t) = \sum_{\alpha \in \Sigma \backslash \Theta}\bigg [ \int_{\mathbbm{P}_{\alpha}^{1}}\frac{\omega_{0}}{2\pi}- t\langle \delta_{P}, h_{\alpha}^{\vee} \rangle \bigg ]\sqrt{-1} \partial \overline{\partial}\log \big (||s_{U}v_{\varpi_{\alpha}}^{+}||^{2}\big ), \ \ \forall t \in [0,T),
\end{equation}
for some local section $s_{U} \colon U \subset X_{P} \to G^{\mathbbm{C}}$, so we obtain item (1). In order to prove item (2), we observe that 
\begin{equation}
[\omega(t)] \in \mathcal{K}_{X_{P}} \Longleftrightarrow  \int_{\mathbbm{P}_{\alpha}^{1}}\frac{\omega_{0}}{2\pi} - t\langle \delta_{P}, h_{\alpha}^{\vee} \rangle > 0, \ \ \forall \alpha \in \Sigma \backslash \Theta \Longleftrightarrow t < \int_{\mathbbm{P}_{\alpha}^{1}}\frac{\omega_{0}}{2\pi \langle \delta_{P}, h_{\alpha}^{\vee} \rangle}, \ \ \forall \alpha \in \Sigma \backslash \Theta.
\end{equation}
Therefore, we conclude that $T$ is given explicitly by Eq. (\ref{singulartime}). For the proof of item (3), from Lemma \ref{volumeflow} and Remark \ref{scalarcurvature}, we have
\begin{equation}
\frac{d}{ d t}{\rm{Vol}}(X_{P},\omega(t)) = - \frac{1}{n!}\int_{X}R(t) \omega(t)^{n} = -R(t){\rm{Vol}}(X_{P},\omega(t)), 
\end{equation}
which implies that
\begin{equation}
\label{scalarcurvvolume}
R(t) = - \frac{d}{dt} \log {\rm{Vol}}(X_{P},\omega(t)).
\end{equation}
From Theorem \ref{volumeflag}, we have 
\begin{equation}
-\log {\rm{Vol}}(X_{P},\omega(t)) = -\sum_{\beta \in \Pi^{+} \backslash \langle \Theta \rangle^{+}}\log \bigg \{ \sum_{\alpha \in \Sigma \backslash \Theta} \bigg [ \int_{\mathbbm{P}_{\alpha}^{1}}\frac{\omega_{0}}{2\pi}- t\langle \delta_{P}, h_{\alpha}^{\vee} \rangle \bigg ] \langle \varpi_{\alpha}, h_{\beta}^{\vee} \rangle \bigg \} + \ \text{const.},
\end{equation}
and taking the derivative with respect to $t$ on both sides of the above expression, from Eq. (\ref{scalarcurvvolume}) we obtain item (3). In order to prove item (4), firstly, we will show that 
\begin{equation}
\label{scalarineq} 
\frac{1}{T-t}\leq  R(t) \leq \frac{n}{T - t}, 
\end{equation}
for all $t \in [0,T)$. In fact, for every $\beta \in \Pi^{+} \backslash \langle \Theta \rangle^{+}$, consider the linear function $P_{\beta}(t)$ on $[0,T)$ given by
\begin{equation}
P_{\beta}(t) := \sum_{\alpha \in \Sigma \backslash \Theta} \bigg [ \int_{\mathbbm{P}_{\alpha}^{1}}\frac{\omega_{0}}{2\pi}- t\langle \delta_{P}, h_{\alpha}^{\vee} \rangle \bigg ] \langle \varpi_{\alpha}, h_{\beta}^{\vee} \rangle.
\end{equation}
From above, it follows that 
\begin{equation}
R(t) = -\sum_{\beta \in \Pi^{+} \backslash \langle \Theta \rangle^{+}}\frac{1}{P_{\beta}(t)}\frac{d}{dt}P_{\beta}(t) = \sum_{\beta \in \Pi^{+} \backslash \langle \Theta \rangle^{+}}\frac{a_{\beta}}{P_{\beta}(t)},
\end{equation}
where $a_{\beta} = \sum_{\alpha \in \Sigma \backslash \Theta}\langle \delta_{P}, h_{\alpha}^{\vee} \rangle \langle \varpi_{\alpha}, h_{\beta}^{\vee} \rangle$, for every $\beta \in \Pi^{+} \backslash \langle \Theta \rangle^{+}$. Now we observe that, by definition of $T$, for all $0 \leq t < T$, and every $\alpha \in \Sigma \backslash \Theta$, the following holds
\begin{equation}
\int_{\mathbbm{P}_{\alpha}^{1}}\frac{\omega_{0}}{2\pi \langle \delta_{P}, h_{\alpha}^{\vee} \rangle} - t \geq T - t,
\end{equation}
which implies that 
\begin{equation}
\label{inequality}
P_{\beta}(t) = \sum_{\alpha \in \Sigma \backslash \Theta} \bigg [ \int_{\mathbbm{P}_{\alpha}^{1}}\frac{\omega_{0}}{2\pi \langle \delta_{P}, h_{\alpha}^{\vee} \rangle }- t\bigg ]\langle \delta_{P}, h_{\alpha}^{\vee} \rangle \langle \varpi_{\alpha}, h_{\beta}^{\vee} \rangle\geq a_{\beta}(T - t),
\end{equation}
for all $t \in [0,T)$, and for every $\beta \in \Pi^{+} \backslash \langle \Theta \rangle^{+}$. From above we obtain
\begin{equation}
\label{upperscalar}
R(t) = \sum_{\beta \in \Pi^{+} \backslash \langle \Theta \rangle^{+}}\frac{a_{\beta}}{P_{\beta}(t)} \leq \sum_{\beta \in \Pi^{+} \backslash \langle \Theta \rangle^{+}}\frac{1}{T - t} = \frac{\dim_{\mathbbm{C}}(X_{P})}{T - t}.
\end{equation}
Thus, we obtain the upper bound for $R(t)$ as stated in Eq. (\ref{scalarineq}). In order obtain the desired lower bound for $R(t)$, we observe the following. Denoting by $\gamma\in \Sigma \backslash \Theta$ the simple root which satisfies
\begin{equation}
T = \int_{\mathbbm{P}_{\gamma}^{1}}\frac{\omega_{0}}{2\pi \langle \delta_{P}, h_{\gamma}^{\vee} \rangle},
\end{equation}
we have $P_{\gamma}(t) = (T - t)\langle \delta_{P}, h_{\gamma}^{\vee} \rangle$ and $a_{\gamma} = \langle \delta_{P}, h_{\gamma}^{\vee} \rangle$, i.e., for $\beta = \gamma$ the inequality (\ref{inequality}) becomes a equality. Hence, we obtain
\begin{equation}
\label{lowerscalar}
R(t) = \sum_{\beta \in \Pi^{+} \backslash \langle \Theta \rangle^{+}}\frac{a_{\beta}}{P_{\beta}(t)} \geq \frac{a_{\gamma}}{P_{\gamma}(t)} = \frac{1}{T-t}, 
\end{equation}
for all $t \in [0,T)$. From Eq. (\ref{upperscalar}) and Eq. (\ref{lowerscalar}), we conclude that Eq. (\ref{scalarineq}) holds. From Lemma \ref{scalaralongflow}, since $\Delta R = 0$, we obtain
\begin{equation}
|{\rm{Ric}}|^{2} = \frac{\partial}{\partial t}R(t) = \sum_{\beta \in \Pi^{+} \backslash \langle \Theta \rangle^{+}} \bigg [\frac{a_{\beta}}{P_{\beta}(t)} \bigg ]^{2}.
\end{equation}
Therefore, since $\frac{a_{\beta}}{P_{\beta}(t)} > 0$, for all $t \in [0,T)$, and for every $\beta \in \Pi^{+} \backslash \langle \Theta \rangle^{+}$, we have
\begin{equation}
\label{Ricciupper}
|{\rm{Ric}}| \leq \sum_{\beta \in \Pi^{+} \backslash \langle \Theta \rangle^{+}} \bigg |\frac{a_{\beta}}{P_{\beta}(t)} \bigg| = R(t) \leq \frac{n}{T - t}.
\end{equation}
On the other hand, from Eq. (\ref{derivativescalar}) and Eq. (\ref{scalarineq}), we have
\begin{equation}
|{\rm{Ric}}|^{2} \geq \frac{1}{n}R(t)^{2} \Longrightarrow |{\rm{Ric}}| \geq \frac{1}{\sqrt{n}} R(t) \geq \frac{1}{\sqrt{n}(T-t)}.
\end{equation}
Hence, it follows that 
\begin{equation}
\frac{1}{\sqrt{n}(T-t)}\leq \frac{1}{\sqrt{n}} R(t) \leq |{\rm{Ric}}| \leq R(t) \leq \frac{n}{T - t}, \ \ \ \ (0 \leq t < T).
\end{equation}
In order to conclude the proof of item (4), we just need to observe that $|{\rm{Rm}}| \leq C_{0}(n)|{\rm{Ric}}|$, on $[0,T)$, see for instance \cite[Theorem 4]{Optimal}, where $C_{0}(n)$ depends only on $n = \dim_{\mathbbm{C}}(X_{P})$. Combining this last fact with Eq. (\ref{Ricciupper}) we conclude the proof of item (4). The proof of item (5) follows from the previous facts. Actually, from Eq. (\ref{scalarcurvvolume}) we have
\begin{equation}
{\rm{Vol}}(X_{P},\omega(t)) = {\rm{Vol}}(X_{P},\omega_{0})\rm{e}^{-\int_{0}^{t}R(s)ds}.
\end{equation}
Thus, from Eq. (\ref{scalarineq}) we obtain item (5). The upper bound for the diameter given in Eq. (\ref{diameigenvalue}) of item (6) can be obtained as follows. From Eq. (\ref{locdescriptionsolution}) and Theorem \ref{AZADBISWAS}, it follows that $\omega(t) = \omega_{\varphi(t)}$, such that $\varphi(t) \colon G^{\mathbbm{C}} \to \mathbbm{R}$ is defined by
\begin{equation}
\varphi(t)(g) := \sum_{\alpha \in \Sigma \backslash \Theta}c_{\alpha}(t)\log \big (||gv_{\varpi_{\alpha}}^{+}||\big ), \ \forall g \in G^{\mathbbm{C}}, \ {\text{where}} \ \ c_{\alpha}(t) = 2\bigg [ \int_{\mathbbm{P}_{\alpha}^{1}}\frac{\omega_{0}}{2\pi}- t\langle \delta_{P}, h_{\alpha}^{\vee} \rangle \bigg ], \forall \alpha \in \Sigma \backslash \Theta.
\end{equation}
By observing that $c_{\alpha}(t) \leq c_{\alpha}(0)$, for all $t\in [0,T)$, and for every $\alpha \in \Sigma \backslash \Theta$, from Proposition \ref{lowerricc}, given $t \in [0,T)$, we obtain for all $x \in X_{P}$ and all $v \in T_{x}X_{P}$, such that $\omega(t)(v,Jv) = 1$, the following
\begin{equation}
\label{LBRiccicurvflow}
{\rm{Ric}}(\omega(t))(v,Jv) \geq \min_{\alpha \in \Sigma \backslash \Theta}  \frac{\langle \delta_{P}, h_{\alpha}^{\vee} \rangle }{c_{\alpha}(t)} \geq \min_{\alpha \in \Sigma \backslash \Theta}  \frac{\langle \delta_{P}, h_{\alpha}^{\vee} \rangle }{c_{\alpha}(0)}.
\end{equation}
Since $c_{\alpha}(0) = \int_{\mathbbm{P}_{\alpha}^{1}}\frac{\omega_{0}}{\pi}$, for every $\alpha \in \Sigma \backslash \Theta$, if we define 
\begin{equation}
C(\omega_{0}) =  \max_{\alpha \in \Sigma \backslash \Theta}\frac{c_{\alpha}(0)}{\langle \delta_{P}, h_{\alpha}^{\vee} \rangle} = \max_{\alpha \in \Sigma \backslash \Theta}  \int_{\mathbbm{P}_{\alpha}^{1}}\frac{\omega_{0}}{\pi \langle \delta_{P}, h_{\alpha}^{\vee} \rangle},
\end{equation}
it follows that 
\begin{center}
$\displaystyle{C(\omega_{0}) \geq \frac{c_{\alpha}(0)}{\langle \delta_{P}, h_{\alpha}^{\vee} \rangle }, \ \forall \alpha \in \Sigma \backslash \Theta \Longleftrightarrow  \frac{\langle \delta_{P}, h_{\alpha}^{\vee} \rangle }{c_{\alpha}(0)} \geq \frac{1}{C(\omega_{0}) }, \ \forall \alpha \in \Sigma \backslash \Theta \Longleftrightarrow \frac{1}{C(\omega_{0}) } = \min_{\alpha \in \Sigma \backslash \Theta} \frac{\langle \delta_{P}, h_{\alpha}^{\vee} \rangle }{c_{\alpha}(0)}}.$
\end{center}
Therefore, for all $x \in X_{P}$ and all $v \in T_{x}X_{P}$, such that $\omega(t)(v,Jv) = 1$, from Eq. (\ref{LBRiccicurvflow}) and the last fact above, we have
\begin{equation}
\label{Riccilower}
{\rm{Ric}}(\omega(t))(v,Jv) \geq \frac{1}{C(\omega_{0})}.
\end{equation}
By applying Myers's theorem \cite{MYERS}, we obtain that ${\rm{diam}}(X_{P},\omega(t)) \leq \pi \sqrt{(2n-1)C(\omega_{0})}$, $\forall t \in [0,T)$. In order to conclude the proof, denoting by $\Delta_{\omega(t)} = {\rm{div} \circ {\rm{grad}}}$ the Laplace operator on functions on $(X_{P},\omega(t))$, for all $t \in [0,T)$, since Eq. (\ref{Riccilower}) holds for all $t \in [0,T)$, from Lichnerowicz's theorem \cite{Andre} we obtain that the first non-zero eigenvalue $\lambda_{1}(t)$ of $\Delta_{\omega(t)}$ satisfies the desired inequality 
\begin{equation}
\label{lowereigen}
\frac{2}{C(\omega_{0})} \leq \lambda_{1}(t), 
\end{equation}
for every $t \in [0,T)$. Also, considering the homogeneous (irreducible) very ample line bundle $K_{X_{P}}^{-1} \to X_{P}$, from \cite[Theorem 1.1]{Arezzo} and \cite[Theorem 1.1]{BiliottiGhigi}, it follows that 
\begin{equation}
\lambda_{1}(t) = \lambda_{1}(X_{P},\omega(t)) \leq \frac{4\pi h^{0}(K_{X_{P}}^{-1})}{\big (h^{0}(K_{X_{P}}^{-1}) - 1 \big )} \frac{\big \langle c_{1}(X_{P}) \cup [\omega(t)]^{n-1}, [X_{P}] \big \rangle}{ (n-1)!{\rm{Vol}}(X_{P},\omega(t))},
\end{equation}
where $h^{0}(K_{X_{P}}^{-1}) = \dim_{\mathbbm{C}}(H^{0}(X_{P},K_{X_{P}}^{-1})^{\ast})$. Since $c_{1}(X_{P}) = \big [\frac{{\rm{Ric}}(\omega(t))}{2\pi}\big ]$, for all $t \in [0,T)$, from Eq. (\ref{Chernscalar}) we obtain 
\begin{equation}
\big \langle c_{1}(X_{P}) \cup [\omega(t)]^{n-1}, [X_{P}] \big \rangle =\int_{X_{P}} \frac{{\rm{Ric}}(\omega(t))}{2\pi} \wedge \omega(t)^{n-1} = \frac{R(t)}{2\pi n} \int_{X_{P}}\omega(t)^{n} = \frac{R(t)}{2\pi}(n-1)!{\rm{Vol}}(X_{P},\omega(t))
\end{equation}
Moreover, since $K_{X_{P}}^{-1}= L_{\chi_{\delta_{P}}}$ (see Eq. (\ref{canonicalbundleflag})), from Borel-Weil theorem (see Remark \ref{BorelWeil}) it follows that  $H^{0}(X_{P},K_{X_{P}}^{-1})^{\ast} \cong V(\delta_{P})$. Thus, from Weyl's formula (Eq. (\ref{Weylformula})) and the above facts, we obtain 
\begin{equation}
\label{uppereigen}
\lambda_{1}(t) \leq 2R(t) \bigg [ \frac{\dim_{\mathbbm{C}} (V(\delta_{P}))}{\dim_{\mathbbm{C}} (V(\delta_{P})) - 1}\bigg ] = 2R(t) \Bigg [ \prod_{\alpha \succ 0} \frac{\langle \varrho^{+} + \delta_{P},h_{\alpha} \rangle}{\langle \delta_{P}, h_{\alpha} \rangle}\Bigg ],
\end{equation}
for every $t \in [0,T)$. Combining Eq. (\ref{lowereigen}) with Eq. (\ref{uppereigen}) we conclude the proof. 
\end{proof}

From the result above, we have the following corollary.

\begin{corollary}[Corollary \ref{corollaryA}]
The conjecture \ref{conj4} holds for any homogeneous solution of the K\"{a}hler-Ricci flow on a rational homogeneous variety.
\end{corollary}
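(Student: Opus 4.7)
The plan is to derive the corollary as an immediate consequence of item (6) of Theorem \ref{Theo1}, since that item already packages the uniform Ricci lower bound together with the diameter estimate obtained from Myers's theorem. The only work left is to spell out why the constant $C(\omega_0)$ is genuinely uniform in $t$, which is the whole content of the conjecture in the homogeneous setting.

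First, I would fix a $G$-invariant initial K\"{a}hler metric $\omega_0$ on $X_P$ and use item (1) of Theorem \ref{Theo1} to write the flow as
\begin{equation*}
\omega(t) \;=\; \omega_{\varphi(t)}, \qquad \varphi(t)(g) = \sum_{\alpha \in \Sigma \backslash \Theta} c_\alpha(t) \log\!\bigl(\lVert g v_{\varpi_\alpha}^{+}\rVert\bigr),
\end{equation*}
with $c_\alpha(t) = 2\bigl[\int_{\mathbbm{P}_\alpha^1}\omega_0/(2\pi) - t\langle \delta_P, h_\alpha^{\vee}\rangle\bigr]$. Matsushima's theorem and the uniqueness of $G$-invariant representatives in each cohomology class then force ${\rm{Ric}}(\omega(t)) = \rho_0$ for all $t \in [0,T)$, so the Ricci form itself is actually independent of $t$; only the reference metric $\omega(t)$ against which we measure it shrinks. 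The monotonicity $c_\alpha(t) \le c_\alpha(0)$ for $t \in [0,T)$ is the crucial quantitative input.

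Second, I would invoke Proposition \ref{lowerricc} with the coefficients $c_\alpha(t)$ to obtain the pointwise lower bound
\begin{equation*}
{\rm{Ric}}(\omega(t))(v,Jv) \;\ge\; \min_{\alpha \in \Sigma \backslash \Theta} \frac{\langle \delta_P, h_\alpha^{\vee}\rangle}{c_\alpha(t)} \;\ge\; \min_{\alpha \in \Sigma \backslash \Theta} \frac{\langle \delta_P, h_\alpha^{\vee}\rangle}{c_\alpha(0)} \;=\; \frac{1}{C(\omega_0)},
\end{equation*}
for every $v \in T_x X_P$ with $\omega(t)(v,Jv)=1$, where the middle inequality uses $c_\alpha(t) \le c_\alpha(0)$ and the last equality is just the definition (\ref{lowerboundricci}) of $C(\omega_0)$. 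The essential point is that $C(\omega_0)$ depends only on $\omega_0$ through the integrals $\int_{\mathbbm{P}_\alpha^1}\omega_0$, not on $t$.

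Third, I would apply Myers's theorem to the underlying Riemannian metric of real dimension $2n$: the uniform Ricci lower bound above yields
\begin{equation*}
{\rm{diam}}(X_P, \omega(t)) \;\le\; \pi \sqrt{(2n-1)\, C(\omega_0)}, \qquad \forall \, t \in [0,T),
\end{equation*}
which is exactly the assertion of Conjecture \ref{conj4} in the homogeneous setting. There is no real obstacle beyond the bookkeeping for $C(\omega_0)$: the homogeneity of $\omega_0$ (via Theorem \ref{AZADBISWAS}), the invariance of the Ricci form ($\rho_0$), and the monotonicity $c_\alpha(t)\le c_\alpha(0)$ conspire to keep all constants frozen in $t$, so that Myers's theorem produces a genuinely uniform bound on $[0,T)$ regardless of whether $T<\infty$ or $T=\infty$.
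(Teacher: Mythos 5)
Your proposal is correct and follows exactly the paper's own route: the corollary is read off from item (6) of Theorem \ref{Theo1}, whose proof is precisely the combination of the monotonicity $c_\alpha(t)\le c_\alpha(0)$, Proposition \ref{lowerricc}, and Myers's theorem that you describe. (One small remark: on a rational homogeneous variety $K_{X_P}^{-1}$ is ample, so $T<\infty$ automatically and the caveat about $T=\infty$ is vacuous here.)
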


\begin{corollary}[Corollary \ref{corollary2}]
\label{corollaryDivisors}
In the previous theorem, if $\omega_{0} \in 2\pi c_{1}(\mathcal{O}(D))$, for some ample divisor $D \in {\rm{Div}}(X_{P})$, then the unique smooth solution $\omega(t)$ of the K\"{a}hler-Ricci flow on $X_{P}$ starting at $\omega_{0}$ also satisfies the following:
\begin{enumerate}
\item[1)] $ \displaystyle{ \omega(t) = \sum_{\alpha \in \Sigma \backslash \Theta} \big ( D_{t} \cdot \mathbbm{P}_{\alpha}^{1}\big)\sqrt{-1} \partial \overline{\partial}\log \big (||s_{U}v_{\varpi_{\alpha}}^{+}||^{2}\big ), \ \ \forall t \in [0,T)}$,
\end{enumerate}
where $(D_{t})_{t \in [0,T)}$ is family of $\mathbbm{R}$-divisors, such that $\frac{d}{d t}D_{t} = K_{X_{P}}$ and $D_{0} = D$;
\begin{enumerate}
\item[2)] $\displaystyle{T = \mathscr{T}(D) = \frac{1}{\tau(D)}}$, where $\tau(D)$ is the nef value of the line bundle $\mathcal{O}(D) \to X_{P}$;
\item[3)] $\displaystyle{R(t) = -\sum_{\beta \in \Pi^{+} \backslash \langle \Theta \rangle^{+}}\frac{d}{dt}\log \bigg \{ \sum_{\alpha \in \Sigma \backslash \Theta} \big ( D_{t} \cdot \mathbbm{P}_{\alpha}^{1}\big) \langle \varpi_{\alpha}, h_{\beta}^{\vee} \rangle\bigg \}, \ \ \forall t \in [0,T)}$;
\item[4)] For all $0 \leq t < T$ we have
\begin{equation}
(2\pi)^{n}\Big [1-\tau(D)t \Big]^{n}\frac{{\rm{deg}}(D)}{n!} \leq {\rm{Vol}}(X_{P},\omega(t)) \leq (2\pi)^{n}\Big [1-\tau(D)t \Big]\frac{{\rm{deg}}(D)}{n!};
\end{equation}

\item[5)] $\displaystyle{{\rm{Ric}}(\omega(t)) \geq \frac{1}{{\mathscr{C}}(D)}}$, such that $\displaystyle{\frac{{\mathscr{C}}(D)}{2} = \max_{\alpha \in \Sigma \backslash \Theta} \frac{(D \cdot  \mathbbm{P}_{\alpha}^{1})}{\langle \delta_{P}, h_{\alpha}^{\vee} \rangle}}$, for all $t \in [0,T)$;
\item[6)] The first non-zero eigenvalue $\lambda_{1}(X_{P},\omega_{0})$ of the Laplacian $\Delta_{\omega_{0}} = {\rm{div} \circ {\rm{grad}}}$ satisfies 
\begin{equation}
\label{eigenvaluelattice}
\frac{2}{\mathscr{C}(D)} \leq \lambda_{1}(X_{P},\omega_{0}) \leq 2n\bigg [ \frac{ \#(\Delta(D) \cap \mathbbm{Z}^{n})}{ \#(\Delta(D) \cap \mathbbm{Z}^{n}) - 1}\bigg ],
\end{equation}
\end{enumerate}
where $\Delta(D)$ is a Newton–Okounkov body associated to $D \in {\rm{Div}}(X_{P})$. Further, $(D_{t})_{t \in [0,T)}$, $\mathscr{T}(D)$ and $\mathscr{C}(D)$ depend only on the numerical equivalence class of $D$.
\end{corollary}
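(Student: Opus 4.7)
The plan is to deduce every item by translating the quantities in Theorem \ref{Theo1} through the chain of isomorphisms ${\rm{Pic}}(X_{P}) \cong {\text{Hom}}(P,\mathbbm{C}^{\times}) \cong {\rm{Cl}}(X_{P})$ (Proposition \ref{C8S8.2Sub8.2.3P8.2.6} and Eq. (\ref{characterdivisor})), using the key numerical identity
\begin{equation*}
\int_{\mathbbm{P}_{\alpha}^{1}} \frac{\omega_{0}}{2\pi} = \big\langle c_{1}(\mathcal{O}(D)), [\mathbbm{P}_{\alpha}^{1}] \big\rangle = (D \cdot \mathbbm{P}_{\alpha}^{1}), \qquad \forall \alpha \in \Sigma \backslash \Theta,
\end{equation*}
which holds because $\omega_{0} \in 2\pi c_{1}(\mathcal{O}(D))$. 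Setting $D_{t} := D + t K_{X_{P}}$ as an $\mathbbm{R}$-divisor and using the canonical class formula (\ref{canonicalclass}) together with $D_{\alpha} \cdot \mathbbm{P}_{\beta}^{1} = \delta_{\alpha\beta}$, we get $(D_{t} \cdot \mathbbm{P}_{\alpha}^{1}) = (D \cdot \mathbbm{P}_{\alpha}^{1}) - t \langle \delta_{P}, h_{\alpha}^{\vee} \rangle$. Substituting this into items (1), (2), (3), (5), (6) of Theorem \ref{Theo1} yields items (1), (3), (5) of the corollary immediately, and $\frac{\mathscr{C}(D)}{2} = \max_{\alpha} (D \cdot \mathbbm{P}_{\alpha}^{1})/\langle \delta_{P}, h_{\alpha}^{\vee}\rangle$ for item (5).

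For item (2), I would compare the formula $T = \min_{\alpha} (D \cdot \mathbbm{P}_{\alpha}^{1})/\langle \delta_{P}, h_{\alpha}^{\vee} \rangle$ with Snow's theorem, noting that under Eq. (\ref{characterdivisor}) one has $\langle \chi_{D}, h_{\alpha}^{\vee}\rangle = (D \cdot \mathbbm{P}_{\alpha}^{1})$, so $\tau(D) = \max_{\alpha} \langle \delta_{P}, h_{\alpha}^{\vee}\rangle/(D \cdot \mathbbm{P}_{\alpha}^{1}) = 1/T$. For item (4), I would combine item (5) of Theorem \ref{Theo1} with the degree formula
\begin{equation*}
{\rm{Vol}}(X_{P},\omega_{0}) = \frac{1}{n!}\int_{X_{P}} \omega_{0}^{n} = \frac{(2\pi)^{n}}{n!}\int_{X_{P}} c_{1}(\mathcal{O}(D))^{n} = \frac{(2\pi)^{n}}{n!} {\rm{deg}}(D),
\end{equation*}
together with $t/T = \tau(D) t$.

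For item (6), the lower bound $2/\mathscr{C}(D) \leq \lambda_{1}(X_{P},\omega_{0})$ is the $t = 0$ instance of Eq. (\ref{diameigenvalue}). The upper bound is the main calculation: instead of invoking Bourguignon--Li--Yau with the anticanonical line bundle (as in Theorem \ref{Theo1}), I would apply it with the ample line bundle $\mathcal{O}(D)$ itself. Because $c_{1}(\mathcal{O}(D)) = [\omega_{0}]/(2\pi)$, the intersection number appearing in the Bourguignon--Li--Yau estimate simplifies to
\begin{equation*}
\big\langle c_{1}(\mathcal{O}(D)) \cup [\omega_{0}]^{n-1}, [X_{P}] \big\rangle = \frac{1}{2\pi}\int_{X_{P}} \omega_{0}^{n} = \frac{n!}{2\pi}{\rm{Vol}}(X_{P},\omega_{0}),
\end{equation*}
so the estimate collapses to $\lambda_{1}(X_{P},\omega_{0}) \leq 2 n h^{0}(\mathcal{O}(D))/(h^{0}(\mathcal{O}(D)) - 1)$. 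By the Borel--Weil theorem (Remark \ref{BorelWeil}) we have $H^{0}(X_{P},\mathcal{O}(D))^{\ast} \cong V(\chi_{D})$, and by Theorem \ref{Okounkovstring} and Eq. (\ref{dimensionlattice}) we can identify $\dim_{\mathbbm{C}} V(\chi_{D}) = \#(\Delta_{\underline{w^{P}}}(D) \cap \mathbbm{Z}^{n}) = \#(\Delta(D) \cap \mathbbm{Z}^{n})$, which finishes (\ref{eigenvaluelattice}). The hardest bookkeeping step is precisely this chain Bourguignon--Li--Yau $\to$ Borel--Weil $\to$ string polytope/Newton--Okounkov identification, since one has to track that the string polytope of the minimal length representative $w^{P}$ actually computes the full dimension of the irreducible module (via the extremal weight argument recalled before Theorem \ref{Okounkovstring}).

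For the final assertion that $(D_{t})$, $\mathscr{T}(D)$ and $\mathscr{C}(D)$ depend only on the numerical equivalence class of $D$, I would observe that all three quantities are expressed purely in terms of the intersection numbers $(D \cdot \mathbbm{P}_{\alpha}^{1})$ (for $\mathscr{T}$ and $\mathscr{C}$) and the $\mathbbm{R}$-divisor class $D + tK_{X_{P}}$ (for $D_{t}$), and that on $X_{P}$ numerical and linear equivalence coincide because $H^{2}(X_{P},\mathbbm{Z})$ is torsion-free and ${\rm{Pic}}^{0}(X_{P}) = 0$, as already noted in Section 3.3.
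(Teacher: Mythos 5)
Your proposal is correct and follows essentially the same route as the paper: the same translation of the Theorem's quantities through $(D_t\cdot\mathbbm{P}^1_\alpha)=(D\cdot\mathbbm{P}^1_\alpha)-t\langle\delta_P,h_\alpha^\vee\rangle$, Snow's theorem for item (2), the degree formula for item (4), and the Bourguignon--Li--Yau estimate applied to $\mathcal{O}(D)$ combined with Borel--Weil and the string-polytope/Newton--Okounkov identification for item (6). The paper's own proof of the corollary likewise switches from $K_{X_P}^{-1}$ to $\mathcal{O}(D)$ in the eigenvalue bound, so even that step is not a deviation.
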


\begin{proof}
Given a $G$-invariant representative $\omega_{0} \in 2 \pi c_{1}(\mathcal{O}(D))$, consider the solution of the K\"{a}hler-Ricci flow $(\omega(t))_{t \in [0,T)}$ starting at $\omega_{0}$ provided by Theorem \ref{ProofTheo1}. In order to prove item (1), item (2), and item (3), we just need to observe that $\int_{\mathbbm{P}_{\alpha}^{1}}\frac{\omega_{0}}{2\pi} = (D \cdot \mathbbm{P}_{\alpha}^{1}) = \langle \chi_{D},h_{\alpha}^{\vee} \rangle$, for every $\alpha \in \Sigma \backslash \Theta$, where $\chi_{D}$ is given as in Eq. (\ref{characterdivisor}). Thus, by taking $D_{t} = D + tK_{X_{P}}$, $t \in [0,T(\omega_{0}))$, it follows that 
\begin{equation}
(D_{t} \cdot \mathbbm{P}_{\alpha}^{1}) = \int_{\mathbbm{P}_{\alpha}^{1}}\frac{\omega_{0}}{2\pi}- t\langle \delta_{P}, h_{\alpha}^{\vee} \rangle,
\end{equation}
for all $t \in [0,T(\omega_{0}))$, see for instance Eq. (\ref{canonicalclass}). Moreover, since 
\begin{equation}
\tau(D) = \max_{\alpha \in \Sigma \backslash \Theta} \frac{\langle \delta_{P}, h_{\alpha}^{\vee} \rangle}{\langle \chi_{D},h_{\alpha}^{\vee}\rangle} = \frac{1}{T(\omega_{0})},
\end{equation}
by setting ${\mathscr{T}}(D) := T(\omega_{0})$, from the previous theorem and the above equations we obtain item (1), item (2), and item (3). In order to prove item (4), we notice that 
\begin{equation}
{\rm{deg}}(D) = {\rm{deg}}(X_{P},\mathcal{O}(D)) := \int_{X_{P}}c_{1}(\mathcal{O}(D))^{n} = \frac{n!}{(2\pi)^{n}}{\rm{Vol}}(X_{P},\omega_{0}),
\end{equation}
from above and from the previous theorem, we have item (4). The proof of item (5) and item (6) follows from the previous theorem, and from following facts. At first, we observe that
\begin{equation}
{\mathscr{C}}(D)=  2\max_{\alpha \in \Sigma \backslash \Theta} \frac{(D \cdot \mathbbm{P}_{\alpha}^{1})}{\langle \delta_{P}, h_{\alpha}^{\vee} \rangle} = \max_{\alpha \in \Sigma \backslash \Theta}  \int_{\mathbbm{P}_{\alpha}^{1}}\frac{\omega_{0}}{\pi \langle \delta_{P}, h_{\alpha}^{\vee} \rangle} = C(\omega_{0}).
\end{equation}
Furthermore, since $\mathcal{O}(D) \to X_{P}$ is a homogeneous (irreducible) very ample line bundle, from \cite[Theorem 1.1]{Arezzo} and \cite[Theorem 1.1]{BiliottiGhigi}, we have 
\begin{equation}
\lambda_{1}(X_{P},\omega_{0}) \leq 2n \frac{ h^{0}(\mathcal{O}(D))}{\big (h^{0}(\mathcal{O}(D)) - 1 \big )},
\end{equation}
where $h^{0}(\mathcal{O}(D)) = \dim_{\mathbbm{C}}(H^{0}(X_{P},\mathcal{O}(D))^{\ast}) = \dim_{\mathbbm{C}}(V(\chi_{D}))$. Hence, by taking $\Delta(D) := \Delta_{\underline{w^{P}}}(D)$ provided by Theorem \ref{Okounkovstring}, from Eq. (\ref{dimensionlattice}) we obtain the upper bound in Eq. (\ref{eigenvaluelattice}). In order to conclude the proof, we observe that, since $H^{2}(X_{P},\mathbbm{Z})$ is torsion-free, we have (by definition) that $(D_{t})_{t \in [0,T)}$, $\mathscr{T}(D)$ and $\mathscr{C}(D)$ depend only on the numerical equivalence class of $D$. 
\end{proof}

\section{Final comments}

In this final section, we make some comments and remarks about how one can relate the numerical invariants $\mathscr{T}(D)$ and $\mathscr{C}(D)$, obtained from Corollary \ref{corollary2}, to certain well-known invariants which appear in some different contexts. 

\begin{itemize}
\item In the particular case that $P$ is a Borel subgroup of $G^{\mathbbm{C}}$, the numerical invariant ${\mathscr{T}}(D)$ which defines the maximal existence time for the solution described in Corollary \ref{corollary2} defines an upper bound for the global Seshadri constant  (\cite{Demailly}, \cite{Lazarsfeld}) as follows: If  $P = B$ is a Borel subgroup of $G^{\mathbbm{C}}$, then for every ample divisor $D \in {\rm{Div}}(X_{B})$, we have
\begin{equation}
\label{Sedhadritime}
\epsilon(\mathcal{O}(D)) \leq 2{\mathscr{T}}(D),
\end{equation}
where $\epsilon(\mathcal{O}(D))$ is the global Seshadri constant of the ample line bundle $\mathcal{O}(D) \to X_{B}$. For explicit description of $\epsilon(\mathcal{O}(D))$, see \cite[Corollary 3.6]{FangLittelmannPabiniak}. 

\item Based on the results of McDuff and Polterovich provided in \cite{McDuffPolterovich}, we have a close relation between Seshadri constants and packing numbers arising from symplectic packing problems (e.g. \cite{Gromov}, \cite{Biran}). In the previous setting, regarding $(X_{B},\omega)$ as a symplectic manifold, for some symplectic form $\omega$, and considering its Gromov width \cite{Gromov}
\begin{equation}
w_{G}(X_{B},\omega) = \sup\{\pi r^{2} \ | \ B\big (0;r \big ) \ \text{can be symplectically embedded in} \ (X_{B},\omega) \}, 
\end{equation}
where $B\big (0;r \big ) \subset \mathbbm{C}^{\dim_{\mathbbm{C}}(X_{B})}$ is the open ball of radius $r$ endowed with the standard symplectic form $\omega_{{\text{std}}}$ induced from $\mathbbm{C}^{\dim_{\mathbbm{C}}(X_{B})}$, we can show the following: Let $D \in {\rm{Div}}(X_{B})$ be an ample divisor and $\omega_{D} \in  c_{1}(\mathcal{O}(D))$ the unique $G$-invariant K\"{a}hler form. If there exists a $C^{\infty}$-embedding 
\begin{equation}
\phi \colon \Big (B\big (0;{\textstyle{\sqrt{\frac{r}{\pi}}}} \big ),\omega_{{\text{std}}} \Big ) \hookrightarrow (X_{B},\omega_{D}),
\end{equation}
for some $r > 0$, such that $\phi^{\ast}(\omega_{D}) = \omega_{{\text{std}}}$ (i.e. $\phi$ is a symplectic embedding), then $r \leq 2 {\mathscr{T}}(D)$. In particular, we have $w_{G}(X_{B},\omega_{D}) \leq 2 {\mathscr{T}}(D)$. These results can be easily obtained combining \cite{FangLittelmannPabiniak} with Eq. (\ref{Sedhadritime}). The relation provided between the numerical invariant ${\mathscr{T}}(D)$ and the Gromov width $w_{G}(X_{B},\omega_{D})$ allows us to describe a constraint for embeddings of symplectic balls in terms of the scalar curvature of $(X_{B},\omega_{D})$. More precisely, if $\phi \colon \big (B\big (0;{\textstyle{\sqrt{\frac{r}{\pi}}}} \big ),\omega_{{\text{std}}} \big ) \hookrightarrow (X_{B},\omega_{D})$ is a symplectic embedding, for some $r > 0$, then 
\begin{equation}
R(\omega_{D}) \leq \frac{2 \pi\dim_{\mathbbm{R}}(X_{B}) }{r}.
\end{equation}
\item Recently, it was shown in \cite{Fleming} that the Seshadri constant determines the maximum possible radius of embeddings of K\"{a}hler balls and vice versa. In this setting, from Eq. (\ref{Sedhadritime}), for any $\omega_{D} \in  c_{1}(\mathcal{O}(D))$, if there exists a holomorphic embedding 
\begin{equation}
\phi \colon \Big (B\big (0;{\textstyle{\sqrt{\frac{r}{\pi}}}} \big ),\omega_{{\text{std}}} \Big ) \hookrightarrow (X_{B},\omega_{D}),
\end{equation}
for some $r > 0$, such that $\phi(0) = eB$ and $\phi^{\ast}(\omega_{D}) = \omega_{{\text{std}}}$ (i.e. $\phi$ is a K\"{a}hler packing), then $r \leq 2 \pi {\mathscr{T}}(D)$.
Notice that, different from Eq. (\ref{Sedhadritime}), in this last case the symplectic form $\omega_{D} \in c_{1}(\mathcal{O}(D))$ does not need to be homogeneous. 

\item The numerical invariant ${\mathscr{C}}(D)$, related to the Ricci curvature appearing in Corollary \ref{corollary2}, can be used to define a lower bound for the log canonical threshold associated to ample $\mathbbm{Q}$-divisors. In fact, observing that every $\mathbbm{Q}$-divisor $D \in {\rm{Div}}(X_{P})_{\mathbbm{Q}}$ is $\mathbbm{Q}$-Cartier, following \cite{Pasquier} and \cite{Smirnov}, for every ample $\mathbbm{Q}$-divisor $D \in {\rm{Div}}(X_{B})_{\mathbbm{Q}}$ and every integer $m \geq 1 $ satisfying $mD \in {\rm{Div}}(X_{B})$, the following holds
\begin{equation}
\label{curvaturesing}
\frac{m}{{\mathscr{C}}(mD)} \leq {\rm{lct}}(D),
\end{equation}
where ${\rm{lct}}(D)$ is the log canonical threshold of $D$, see for instance \cite[\S 8 - \S 10]{Kollar}, \cite{DemaillyKollar}, \cite{LazarsfeldII}. In particular, the pair $(X_{B},D)$ is Kawamata log terminal if and only if the inequality $ {\mathscr{C}}(mD) < m$ holds, and log canonical if the inequality $ {\mathscr{C}}(mD) \leq m$ holds. 
In the particular setting of full flag varieties, the result above provides a geometrical meaning (Corollary \ref{corollary2}, item (5)) for the lower bound of ${\rm{lct}}(D)$ introduced in \cite[Theorem 3.2]{Smirnov}. 

\item In \cite{Nadel1}, following the ideas of Kohn \cite{Kohn} and Siu \cite{Siu}, Nadel introduced the concept of multiplier ideal sheaves as obstructing sheaves for the existence of K\"{a}hler-Einstein metrics of positive scalar curvature on certain complex compact manifolds. This formulation in terms of multiplier ideal sheaves opens up many possibilities for relations with complex and algebraic geometry, see for instance \cite{Demailly2}, \cite{Siu}, \cite{BBJ}, \cite{Braun}, \cite{GuanZhou}, and references therein. In this setting, from Eq. (\ref{curvaturesing}) above, denoting by $\mathcal{J}(D) \subseteq \mathcal{O}_{X_{B}}$ the analytic multiplier ideal sheaf associated to the pair $(X_{B},D)$, we have the following (e.g. \cite{LazarsfeldII}, \cite[\S 3]{Kollar}):
\begin{equation}
(X_{B},D) \ \ {\text{is  Kawamata log terminal}} \ (KLT) \iff  \mathcal{J}(D) = \mathcal{O}_{X_{B}}.
\end{equation}
From Eq. (\ref{curvaturesing}), and the geometrical nature of ${\mathscr{C}}(mD)$, the characterization above shows that the triviality of $\mathcal{J}(D)$ imposes constraints on the Riemannian geometry of $(X_{B},\omega_{0})$, where $\omega_{0}$ is the unique $G$-invariant K\"{a}hler metric in $2\pi c_{1}(\mathcal{O}(mD))$. Being more precise, if $\mathcal{J}(D) = \mathcal{O}_{X_{B}}$, i.e., if $(X_{B},D)$ is KLT, then the homogeneous solution of the K\"{a}hler-Ricci flow $\omega(t)$, $0 \leq t < {\mathscr{T}}(mD)$, starting at $\omega_{0} \in 2\pi c_{1}(\mathcal{O}(mD))$, satisfies ${\rm{Ric}}(\omega(t)) > \frac{1}{m}$, for all $t \in [0,{\mathscr{T}}(mD))$. Therefore, from Myers's theorem \cite{MYERS} and Lichnerowicz's theorem \cite{Andre}, in this last setting we have 
\begin{equation}
\mathcal{J}(D) = \mathcal{O}_{X_{B}} \Longrightarrow {\rm{diam}}(X_{B},\omega(t)) \leq \pi \sqrt{(2n-1)m} \ \ \  {\text{and}} \ \ \ \frac{2}{m} \leq \lambda_{1}(X_{B},\omega(t)),
\end{equation}
$\forall t \in [0,{\mathscr{T}}(mD))$. In particular, we see that the triviality of $\mathcal{J}(D)$ imposes constraints on the diameter and on the first non-zero eigenvalue of the Laplacian $\Delta_{\omega(t)} = {\rm{div} \circ {\rm{grad}}}$. 
\end{itemize}

\end{document}